\DeclareMathOperator{\Sym}{Sym}
\DeclareMathOperator{\Hom}{Hom}
\DeclareMathOperator{\Mas}{Mas}
\newtheorem{theorem}{Theorem}[section]
\newtheorem{propdef}{Definition-Proposition}[section]
\newtheorem{proposition}[theorem]{Proposition}
\newtheorem{corollary}[theorem]{Corollary}
\newtheorem{lemma}[theorem]{Lemma}
\newtheorem{remark}[theorem]{Remark}
\newtheorem{definition}[theorem]{Definition}
\newcommand{\Z}{{\mathbb Z}}
\newcommand{\cS}{\mathcal{S}}
\newcommand{\downarrowright}[1]{\downarrow
\rlap{\raise0.1cm\hbox{$\scriptstyle{#1}$}}}
\newcommand{\downarrowleft}[1]{\rlap{\kern-0.2cm
\raise0.1cm\hbox{$\scriptstyle{#1}$}}\downarrow}
\newcommand{\uparrowright}[1]{\uparrow
\rlap{\lower0.1cm\hbox{$\scriptstyle{#1}$}}}
\newcommand{\uparrowleft}[1]{\rlap{\kern-0.2cm
\lower0.1cm\hbox{$\scriptstyle{#1}$}}\uparrow}
\def\umono{\ar@{_{(}->}[u]}
\def\uumono{\ar@{_{(}->}[uu]}
\def\lmono{\ar@{_{(}->}[l]}
\def\llmono{\ar@{_{(}->}[ll]}
\begin{document}

\title{Some remarks on the Maslov index}

\author{Wolfgang Pitsch}
\address{Departament de Matemàtiques \\
Universitat Autònoma de Barcelona \\
08193 Bellaterra (Cerdanyola del Vallés) }
\email{pitsch@mat.uab.es}

\thanks{Author supported by FEDER/MEC grant ``Homotopy theory of combinatorial and algebraic structures'' PID2020-116481GB-I00}
\date{}
%    General info
\subjclass[2020]{Primary 19G12; Secondary 11E81, 20E22}

%\dedicatory{This paper is dedicated to our advisors.}

\keywords{Wall index, Maslov index, Sylvester matrix, Sturm sequence, Witt group}

%%%%%%%%%%%%%%%%%%%%%%%%%%%%%%%%%%%%%%%
%                         ABSTRACT                              %
%%%%%%%%%%%%%%%%%%%%%%%%%%%%%%%%%%%%%%%

\begin{abstract}
	It is  a classical fact that Wall's index of a triplet of Lagrangians in a symplectic space over a field $k$ defines a $2$-cocycle $\mu_W$ on the associated symplectic group with values in the Witt group of $k$. Moreover, modulo the square of the fundamental ideal this is a trivial $2$-cocycle. In this work we revisit this fact from the viewpoint of the theory of Sturm sequences and Sylvester matrices developed by  J.~Barge and  J.~Lannes in~\cite{MR2431916}. We define a refinement by a factor of $2$ of Wall's cocycle and  use the technology of Sylvester matrices to  give an explicit formula for the coboundary associated to the mod $I^2$ reduction of the cocycle which is valid for any field of characteristic different from $2$. Finally we explicitly compute the values of the coboundary on standard elements of the symplectic group
\end{abstract}

\maketitle
%%%%%%%%%%%%%%%%%%%%%%%%%%%%%%%%%%%%%%%
%                       INTRODUCTION                            %
%%%%%%%%%%%%%%%%%%%%%%%%%%%%%%%%%%%%%%%

%%%%%%%%%%%%%%%%%%%%%%%%%%%%%%%%%%%%%
%%%%%%%%%%%%%%%%%%%%%%%%%%%%%%%%%%%%%
\section{Introduction}
%%%%%%%%%%%%%%%%%%%%%%%%%%%%%%%%%%%%%
%%%%%%%%%%%%%%%%%%%%%%%%%%%%%%%%%%%%%

Let $k$ be a field of characteristic different from $2$. Let $W(k)$ be the Witt ring over $k$ and let  $I = \ker (W(k) \rightarrow \Z/2\Z)$ be its fundamental ideal. Denote by  $Sp_{2g}(k)$ the symplectic group of $k$, i.e. the orthogonal group associated to the symplectic bilinear form on  $k^{2g}$, which is given by the $2g \times 2g$  matrix  $\left(  \begin{matrix} 0_g & I_g \\ -I_g & 0_g \end{matrix} \right)$.   Recall that a \emph{Lagrangian}  in $k^{2g}$ is a totally isotropic subspace of maximal rank for the symplectic form. In \cite{MR0246311} Wall associates to any three Lagrangians  $L_1, L_2, L_3$ in $k^{2g}$ an element   $\mu_W(L_1,L_2,L_3)  \in W(k)$, nowadays known as Wall's tertiary index. Geometrically this index measures the failure of additivity of the signature of $4k$-manifolds under the operation of gluing along parts of the boundary.    Algebraically, if we fix a Lagrangian say L, we have then an associated function  $ \mu_W: Sp_{2g}(k) \times Sp_{2g}(k) \rightarrow W(k)$ by the rule $(A,B) \longmapsto \mu_W(L,AL,ABL)$ which is by direct inspection a $2$-cocycle, \emph{Maslov's cocycle}.

  It is then a well-known (see for instance \cite{MR1740881}) that there exists a unique function, 
 $\Phi: Sp_{2g}(k) \rightarrow W(k)/I^2$ such that,
 \[
 \forall A,B \in Sp_{2g}(k) \quad \mu_W(L,AL,ABL) = \Phi(AB) - \Phi(A) - \Phi(B) \textrm{ mod } I^2.
 \]

This statement amounts to saying that the  Maslov cocycle modulo $I^2$ is trivial in a unique way. The unicity part of this statement is very classic. Indeed, any two  trivializations of $\mu_W$ differ by a group homomorphism $Sp_{2g}(k) \rightarrow W(k)/I^2$, and the  unicity  statement is an elementary consequence from the fact that $Sp_{2g}(k)$is perfect, unless  $g=2$ and $k=\mathbb{F}_3$. In this singular case we have     $Sp_2(\mathbb{F}_3) = SL_2(\mathbb{F}_3)$, the abelianization of this group is $\mathbb{Z}/3$ and is induced by the exceptional map $PSL_2(\mathbb{F}_3) \simeq \mathfrak{A}_4 \rightarrow \mathbb{Z}/3$ \cite[p. 78]{MR1357169}. Bu as $W(\mathbb{F}_3) \simeq W(\mathbb{F}_3)/I^2 \simeq \mathbb{Z}/4 $ \cite[Lemma 1.5 p.87]{MR0506372}, there are no non-trivial homomorphisms $Sp_2(\mathbb{F}_3) \rightarrow W(\mathbb{F}_3)/I^2)$ and unicity follows.

The aim of this work is to revisit this results inside the framework introduced in \cite{MR2431916}. Our first task, in Section~\ref{sec:chemlagr} will be to associate to any \emph{Lagrangian path}, i.e. any finite sequence $\alpha: \Lambda_0,\Lambda_1,\dots,\Lambda_n$ of Lagrangians such that any two consecutive terms are transverse, a bilinear form called its \emph{Sylvester matrix} $S(\alpha)$. We will study the behavior of the Sylvester matrix under the operation  of path concatenation. The bulk of Section~\ref{sec:chemlagr} is the proof of the

\begin{lemma}[Shortcut Lemma]
	Let $\Lambda_0, \cdots, \Lambda_{n+1}$  be a Lagrangian path, and let  $S$ denote its Sylvester matrix. Assume that there exist two indices  $0 \leq i < j \leq n+1$ such that $\Lambda_i \pitchfork \Lambda_j$.  Then we have two new Lagrangian paths: 
	\begin{enumerate}
		\item[a)] The sub-sequence $\Lambda_i \cdots, \Lambda_j$, whose Sylvester matrix is $S(\Lambda_i,\dots,\Lambda_j)$,
		\item[b)] the shortened sequence $\Lambda_0, \cdots, \Lambda_i,\Lambda_j, \cdots \Lambda_{n+1}$, whose Sylvester matrix is
		
		$S(\Lambda_0,\dots,\Lambda_i,\Lambda_j,\dots,\Lambda_{n+1})$.
	\end{enumerate}
	Then,
	\[
	S \textrm{ is isometric to }S(\Lambda_i,\dots,\Lambda_j) \bot S(\Lambda_0,\dots,\Lambda_i,\Lambda_j,\dots,\Lambda_{n+1}),
	\]
	where $\bot$ stands for the orthogonal sum of bilinear forms.
\end{lemma}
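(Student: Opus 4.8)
The plan is to produce an explicit isometry, with the transversality $\Lambda_i\pitchfork\Lambda_j$ — the very hypothesis that turns the shortened sequence of b) into a genuine Lagrangian path — playing the role of an extra coordinate. The starting point is that $S(\alpha)$ is built from purely local data: it is carried by the direct sum indexed by the \emph{interior} Lagrangians of $\alpha$, its block at $\Lambda_k$ is the contribution of the consecutive triple $(\Lambda_{k-1},\Lambda_k,\Lambda_{k+1})$, and its only off-diagonal couplings are between neighbours $\Lambda_k,\Lambda_{k+1}$, coming from the transversality of that pair. Hence the underlying space of $S=S(\Lambda_0,\dots,\Lambda_{n+1})$ splits as $P\oplus Q\oplus R$ with $Q=\bigoplus_{i<k<j}\Lambda_k$ (where $P$ or $R$ may be zero), and by direct inspection of the blocks the restriction of $S$ to $Q$ is already the Sylvester matrix $S(\Lambda_i,\dots,\Lambda_j)$ of the sub-path; moreover $Q$ is coupled to $P$ only through the pair $\Lambda_i,\Lambda_{i+1}$ and to $R$ only through the pair $\Lambda_{j-1},\Lambda_j$, with no direct interaction between $P$ and $R$, while the blocks of $S$ supported away from $\Lambda_i$ and $\Lambda_j$ agree with the corresponding blocks of $S(\Lambda_0,\dots,\Lambda_i,\Lambda_j,\dots,\Lambda_{n+1})$. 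The count $\dim S=\dim Q+\dim P+\dim R$ already matches the statement.

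The main step is a block ``completion of the square'' in the $Q$-variables. The hypothesis $\Lambda_i\pitchfork\Lambda_j$ should force $S(\Lambda_i,\dots,\Lambda_j)$ to be nondegenerate — morally, nondegeneracy here reflects the transversality of the two endpoints $\Lambda_i$ and $\Lambda_j$ — and I would establish this in parallel, by induction on $j-i$. Granting nondegeneracy, passing to the Schur complement of the $Q$-block turns $S$ into $S(\Lambda_i,\dots,\Lambda_j)\perp S'$, where $S'$ lives on $P\oplus R$ and differs from the old $P\oplus R$-block only in the $\Lambda_i$- and $\Lambda_j$-slots, the correction being expressed through the three corner blocks (upper-left, lower-right and the off-diagonal corner) of $S(\Lambda_i,\dots,\Lambda_j)^{-1}$. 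One is thereby reduced to three identities at the two seams: the corrected $\Lambda_i$-block must be the triple contribution of $(\Lambda_{i-1},\Lambda_i,\Lambda_j)$, the corrected $\Lambda_j$-block that of $(\Lambda_i,\Lambda_j,\Lambda_{j+1})$, and the newly created coupling between $\Lambda_i$ and $\Lambda_j$ must be precisely the transversality isomorphism of the pair $(\Lambda_i,\Lambda_j)$. With these in hand, $S'$ is by construction the Sylvester matrix of $\Lambda_0,\dots,\Lambda_i,\Lambda_j,\dots,\Lambda_{n+1}$, and the lemma follows.

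The three seam identities carry the weight of the argument, and I expect them to go again by induction on $j-i$: the base case $j=i+2$ is a direct $2\times2$-block computation with the four Lagrangians $\Lambda_{i-1},\Lambda_i,\Lambda_{i+1},\Lambda_{i+2}$ (it is essentially a block form of Wall's additivity relation), while the inductive step amounts to expressing the relevant corner blocks of $S(\Lambda_i,\dots,\Lambda_j)^{-1}$ through continuants of shorter sub-chains and recognising those continuants as the Maslov contributions of the triples in question. The principal obstacle is precisely this bookkeeping — following how the two boundary blocks and the couplings transform under the Schur reduction — and, interwoven with it, handling the radicals: the Sylvester matrices involved are in general degenerate, and it is the transversality of the endpoints of each sub-path one shortcuts across that controls those radicals and keeps the successive Schur complements legitimate.
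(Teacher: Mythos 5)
Your strategy is essentially the one the paper uses: restrict $S$ to the block $Q=\Lambda_{i+1}^\ast\oplus\cdots\oplus\Lambda_{j-1}^\ast$, observe that this restriction is exactly $S(\Lambda_i,\dots,\Lambda_j)$ and is non-degenerate precisely because $\Lambda_i\pitchfork\Lambda_j$ (Corollary~\ref{cor:sylvdettrans}), split $S$ as this form orthogonal to the induced form on $Q^\bot$, and then verify the three ``seam'' identities (the two corrected diagonal blocks at $\Lambda_i$ and $\Lambda_j$, and the new coupling $\pm\beta_{\Lambda_i,\Lambda_j}^{-1}$). The paper's explicit parametrization of $Q^\bot$ by the maps $E_0\beta^{-1}$ and $F\beta^{-1}$ and the computation of $J^\ast S J$ is exactly your Schur complement in different notation, so the two arguments coincide in substance.

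One caution about the one place where you deviate: you propose to establish the seam identities by induction on $j-i$, ``expressing the corner blocks of $S(\Lambda_i,\dots,\Lambda_j)^{-1}$ through continuants of shorter sub-chains'' and invoking transversality to keep ``successive Schur complements legitimate.'' The hypothesis only gives $\Lambda_i\pitchfork\Lambda_j$; the intermediate sub-paths $\Lambda_i,\dots,\Lambda_k$ (or $\Lambda_k,\dots,\Lambda_j$) need not have transverse endpoints, so their Sylvester matrices may be singular and a peel-one-vertex-at-a-time Schur reduction is not automatically valid. This is avoidable: the identities $S(\alpha)E_0=T\,\beta_{i,j}$ and $S(\alpha)F=U\,\beta_{j,i}$ of Propositions~\ref{prop:transeqnondeg} and~\ref{prop:transeqnondegalt}, applied to the sub-path $\Lambda_i,\dots,\Lambda_j$, give the first and last columns of $S(\Lambda_i,\dots,\Lambda_j)^{-1}$ in closed form in terms of the $\beta$'s, and the three seam identities then follow from Lemma~\ref{lem reldiffbeta} and Proposition~\ref{prop relfond} with no induction at all; this is how the paper proceeds. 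If you keep the induction, you should at least base it on determinant-type (continuant) recursions that tolerate vanishing intermediate determinants rather than on inverting intermediate blocks.
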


In Section~\ref{sec:Maslovindex} we use the Shortcut Lemma to prove that  that given three Lagrangians $\Lambda_0,\Lambda_1,\Lambda_2$, if we choose two Lagrangian paths $\alpha_{01}$ and $\alpha_{12}$ respectively starting at $\Lambda_0$ and $\Lambda_1$ and ending at $\lambda_1$ and$\lambda_2$ then the class in $W(k)$ of the bilinear form
\[
\mu_{BL}(\Lambda_0,\Lambda_1,\Lambda_2) = S(\alpha_{01} \ast \alpha_{12}) \bot - S(\alpha_{01})\bot-S(\alpha_{12})
\] 
is independent of the choice of paths. We then show that the index $\mu_{BL}$ satisfies the characteristic properties of the Maslov index, and compare it to the Wall-Kashiwara index.

\begin{theorem} The index $\mu_{BL}$ satisfies the following properties:
	\begin{enumerate}
		\item If any two of the three Lagrangians $\Lambda_0,\Lambda_1,\Lambda_2$ are equal, then \\ $ \mu_{BL}(\Lambda_0,\Lambda_1,\Lambda_2)=0$.
		\item If $\phi \in Sp_{2g}(k)$, then $\mu_{BL}(\Lambda_0,\Lambda_1,\Lambda_2) =  \mu_{BL}(\phi\cdot\Lambda_0,\phi\cdot\Lambda_1,\phi\cdot\Lambda_2)$.
		\item The index $\mu$ is a $2$-cocycle, if $\Lambda_0,\Lambda_1,\Lambda_2,\Lambda_3$ are $4$ Lagrangians, then
		\[
		\mu_{BL}(\Lambda_1,\Lambda_2,\Lambda_3) -  \mu_{BL}(\Lambda_0,\Lambda_2,\Lambda_3) +  \mu_{BL}(\Lambda_0,\Lambda_1,\Lambda_3) - \mu_{BL}(\Lambda_0,\Lambda_1,\Lambda_2)=0.
		\]
	\item If $\sigma \in \mathfrak{S}_3$ is a permutation of the indices $0,1$ and $2$, then:
	\[
	\mu_{BL}(\Lambda_0,\Lambda_1,\Lambda_2) =\varepsilon(\sigma) \mu_{BL}(\Lambda_\sigma(0),\Lambda_\sigma(1),\Lambda_\sigma(2)).
	\]
	\item If $\mu_{KW}$ denotes Wall-Kashimara's index of three Lagrangians, then in $W(k)$.
	\[
	2\mu_{BL}(\Lambda_0,\Lambda_1,\Lambda_2) = \mu_{WK}(\Lambda_1,\Lambda_2,\Lambda_3)
	\]
	\end{enumerate}
\end{theorem}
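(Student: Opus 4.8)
The plan is to lean on the two tools already in place, the Shortcut Lemma and the fact that the Witt class $\mu_{BL}(\Lambda_0,\Lambda_1,\Lambda_2)$ is independent of the chosen Lagrangian paths, proving (2) and (3) first, then (1), then deducing (4) from (1) and (3), and finally attacking (5), which carries the real content.

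Item (2) is naturality. A symplectomorphism $\phi$ sends a Lagrangian path to a Lagrangian path, since it preserves transversality, and as the Sylvester matrix is built purely out of the ambient symplectic form, $S(\phi\cdot\alpha)$ is isometric to $S(\alpha)$; applying $\phi$ to a choice of paths $\alpha_{01},\alpha_{12}$ for $(\Lambda_0,\Lambda_1,\Lambda_2)$ yields an admissible choice for the $\phi$-translated triple, and the three summands agree term by term. Item (3) is then purely formal: fix paths $\alpha_{01}\colon\Lambda_0\to\Lambda_1$, $\alpha_{12}\colon\Lambda_1\to\Lambda_2$, $\alpha_{23}\colon\Lambda_2\to\Lambda_3$ and, using path-independence, compute $\mu_{BL}(\Lambda_0,\Lambda_1,\Lambda_2)$ from $(\alpha_{01},\alpha_{12})$, $\mu_{BL}(\Lambda_0,\Lambda_2,\Lambda_3)$ from $(\alpha_{01}\ast\alpha_{12},\alpha_{23})$, $\mu_{BL}(\Lambda_0,\Lambda_1,\Lambda_3)$ from $(\alpha_{01},\alpha_{12}\ast\alpha_{23})$ and $\mu_{BL}(\Lambda_1,\Lambda_2,\Lambda_3)$ from $(\alpha_{12},\alpha_{23})$; the alternating sum then becomes a $\Z$-combination of the six classes $S(\alpha_{01})$, $S(\alpha_{12})$, $S(\alpha_{23})$, $S(\alpha_{01}\ast\alpha_{12})$, $S(\alpha_{12}\ast\alpha_{23})$, $S(\alpha_{01}\ast\alpha_{12}\ast\alpha_{23})$ in which every coefficient vanishes.

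For item (1) I again use path-independence and a convenient choice. If $\Lambda_0=\Lambda_1$, resp. $\Lambda_1=\Lambda_2$, take the constant length-$0$ path at the repeated Lagrangian for $\alpha_{01}$, resp. $\alpha_{12}$: its Sylvester matrix is the zero form and its concatenation with the other path is that other path, so the three summands cancel. For $\Lambda_0=\Lambda_2$, take $\alpha_{12}=\overline{\alpha_{01}}$; here one needs the reversal behaviour of the Sylvester matrix, namely that $S(\alpha\ast\overline{\alpha})$ and $S(\alpha)\bot S(\overline{\alpha})$ have the same class in $W(k)$, which is obtained by stripping the palindromic path with the Shortcut Lemma; this is a small lemma I would record while developing the basic properties of $S$, and it is the one slightly delicate point of this item. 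Item (4) follows formally from (1) and (3): the transpositions $(0\,1)$ and $(1\,2)$ generate $\mathfrak{S}_3$, and substituting the degenerate tuples $(\Lambda_0,\Lambda_1,\Lambda_0,\Lambda_2)$ and $(\Lambda_0,\Lambda_1,\Lambda_2,\Lambda_1)$ into the cocycle identity of (3), then discarding two of the four terms by (1), gives antisymmetry under these two transpositions; the sign is multiplicative under composition of permutations.

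Item (5) is the main obstacle. Both $2\mu_{BL}$ and $\mu_{WK}$ are $Sp_{2g}(k)$-invariant antisymmetric $2$-cocycles vanishing on degenerate triples, so I would first reduce to a pairwise transverse triple $\Lambda_0\pitchfork\Lambda_1\pitchfork\Lambda_2\pitchfork\Lambda_0$: given an arbitrary triple, insert an auxiliary Lagrangian $\Lambda'$ transverse to all three, passing to $k(t)$ when the ground field is too small since $W(k)\hookrightarrow W(k(t))$, and rewrite both indices through the cocycle relation in terms of transverse triples. For a transverse triple the paths may be taken to be single edges and the two-term Sylvester matrices are trivial, so $\mu_{BL}(\Lambda_0,\Lambda_1,\Lambda_2)=S(\Lambda_0,\Lambda_1,\Lambda_2)$ in $W(k)$. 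What remains, and where I expect nearly all the work to lie, is to identify this explicit three-term Sylvester form as a square root of Kashiwara's form $(x_0,x_1,x_2)\mapsto\omega(x_0,x_1)+\omega(x_1,x_2)+\omega(x_2,x_0)$ on $\Lambda_0\oplus\Lambda_1\oplus\Lambda_2$ (here $\omega$ the symplectic form): writing everything in the splitting $k^{2g}=\Lambda_0\oplus\Lambda_2$ with $\Lambda_1$ realised as the graph of a symmetric isomorphism, one computes both forms and checks that doubling the Sylvester form reproduces the Kashiwara form up to isometry. Once this computation and the small-field reduction are settled, the theorem follows; everything else is bookkeeping once the Shortcut Lemma is in hand.
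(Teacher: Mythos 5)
Your proposal is correct and follows essentially the same route as the paper: convenient path choices plus the Shortcut Lemma for (1), equivariance of Sylvester matrices for (2), the telescoping alternating sum for (3), degenerate $4$-tuples fed into the cocycle identity for (4), and a Lagrangian transverse to all three reducing (5) to an explicit comparison with the Kashiwara form, which the paper, like you, delegates to a computation (citing Rudolph--Schmidt). The only formal slip is the ``constant length-$0$ path'' in (1): in this formalism consecutive Lagrangians of a path must be transverse, so one must instead use the loop $\Lambda_0, M, \Lambda_0$ with $M \pitchfork \Lambda_0$, whose Sylvester matrix is the zero $1\times 1$ block and hence vanishes after regularization --- exactly the paper's choice; note also that since $\operatorname{char} k \neq 2$ forces $|k|\geq 3$, the $k(t)$ detour in (5) is unnecessary.
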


%We use the shortcut lemma to prove a compact formula for the index $\mu_{BL}$, reducing the size of the support of the bilinear form $\mu_{BL}(\Lambda_0,\Lambda_1,\Lambda_2)$ to $\Lambda_0 \oplus \Lambda_1 \oplus \Lambda_2$. We show the Witt class of this bilinear form is that of Wall's index~\cite{MR0246311}. 
 Finally, in Section~\ref{sec:trivmodI2} proceed to study the associated $2$-cocycle on the symplectic group, given by fixing a Lagrangian $L$ and defining for any to $A,B \in Sp_{2g}(k)$, $\mu_L(A,B)= \mu_{BL}(L,AL,ABL)$. As we have fixed $L$,  the symplectic space is isometric to $L \oplus L^\ast$ with the standard symplectic form. Let $\mathcal{S}_L \subseteq Sp_{2g}(k)$ (resp. $\mathcal{S}_{L^\ast}$) the stabilizer of $L$ (resp. $L^\ast$) and consider the canonical evaluation amp $E: \mathcal{S}_L \ast \mathcal{S}_{L^\ast} \rightarrow Sp_{2g}(k)$ from the free product of the two stabilizers to the symplectic group. Almost by definition, an element in $\mathcal{S}_L \ast \mathcal{S}_{L^\ast}$ is  sequence of quadratic forms $q_n,q_{n+1}, \dots, q_m$ alternatively defined on $L$ and on $L^\ast$, such a sequence is called a Sturm sequence in \cite{MR2431916} and Barge and Lannes show there how to associate to a Sturm sequence a Lagrangian path and hence a Sylvester matrix.

The technology of Sturm sequences and Sylvester matrices allows to define $4$ canonical functions $f_{00},f_{01},f_{11},f_{10}$ on the free product $\mathcal{S}_L\ast\mathcal{S}_{K^\ast}$. The behavior of these functions with respect to the free product shows that there is a commutative diagram:
\[
\xymatrix{
	1 \ar[r] & K \ar[r] \ar[d]^-{f_{01}} & \mathcal{S}_L \ast \mathcal{S}_{L^\ast} \ar[r]  \ar[dl]^{f_{00}} \ar[d] & Sp_{2g}(k) \ar@{=}[d] \ar[r] & 1 \\
	0 \ar[r] & W(k) \ar[r]& \Gamma \ar[r] & Sp_{2g}(k) \ar[r] & 1
}
\]
where$f_{00}$, restricted to the kernel $K$, is a homomorphism but the retraction  $f_{01}$ \emph{is not}.  By standard group cohomology arguments, $f_{01}$ defines a $2$-cocycle for the bottom extension, and by construction of the functions this is in fact $\mu_L$. Finally, by analyzing the image of $f_{01}$, we show that its reduction mod $I^2$, where $I$ is the fundamental ideal in the Witt group is a homomorphism, therefore $f_{01} \textrm{ mod } I^2$ is our desired trivialization. We finally compute explicitly this function for typical elements in the Symplectic group.

{\bf Acknowledgments: } This work grew out of  series  of visits by Jean Barge to Barcelona around 2005. During these he explained  to me the present point of view of his joint work with Jean Lannes~\cite{MR2431916} and we worked out the details of the proofs of their statements, particularly the Short Cut Lemma, and the explicit splitting of the mod-2 reduction of the Maslov cocycle. His recent passing away prompted me to fully publish what we had worked out together. As a further recognition I dedicate this article to the memory of my former PhD advisor, Professor Jean Barge. 

%%%%%%%%%%%%%%%%%%%%%%%%%%%%%%%%%%%%%
%%%%%%%%%%%%%%%%%%%%%%%%%%%%%%%%%%%%%
\section{General background}\label{sec:rappels}
%%%%%%%%%%%%%%%%%%%%%%%%%%%%%%%%%%%%%
%%%%%%%%%%%%%%%%%%%%%%%%%%%%%%%%%%%%%

%%%%%%%%%%%%%%%%%%%%%%%%%%%%%%%%%%%%%
\subsection{Witt Monoïd and Witt Group}\label{subsec:Witt}
%%%%%%%%%%%%%%%%%%%%%%%%%%%%%%%%%%%%%

We recall here some elementary facts about the Witt group of a field of characteristic different from $2$, for more ample information and in particular for the proofs of the results presented we refer the reader to~\cite{MR0506372} or~\cite{Lam05}.

Let $V$ denote a  $k$-vector space  and denote by  $V^\ast = \Hom_k(V,k)$ its dual. Let then  $\Sym(k)$ denote the set of all symmetric bilinear forms  defined on finite dimensional $k$-vector spaces up to isometry. An element in  $\Sym(k)$ is represented by a pair $(P,q)$, where $P$ is a $k$-vector space of finite dimension and  $q: P \rightarrow  P^\ast$ is a $k$-linear map that coincides with its dual map $q^\ast: P^{\ast \ast} \rightarrow P^\ast$ up to the canonical identification   $P \simeq P^{\ast\ast}$. The space $P$ is by definition the  \emph{support} of $q$, if clear from the context  we will omit the support from the notation and write simply $q$ for $(P,q)$.

Recall that a symmetric bilinear form  $(P,q)$ is \emph{non-degenerate} if and only if $q: P \rightarrow P^\ast $ is an isomorphism, and that a symmetric bilinear form  $(P,q)$  is \emph{neutral} if and only if it is non-degenerate and  there exists  a subvector space $I \subset P$ that coincides with its own orthogonal $I = I ^\bot$.

The orthogonal sum of symmetric bilinear forms, which we denote by $\bot$, endows the set  $\Sym(k)$ with the structure of a commutative monoïd. The isometry classes of neutral forms determine a sub-monoïd $\operatorname{Neut}(k) \subseteq \Sym(k)$, the quotient $MW(k) = \Sym(k)/\operatorname{Neut}(k)$ is by definition the \emph{Witt monoïd} of  $k$. More precisely two symmetric bilinear forms  $(P_1,q_1)$ and $(P_2,q_2)$ in $\Sym(k)$ are  equivalent if and only if there exist two neutral forms $(N_1,n_1)$ and $(N_2,n_2)$ such that $q_1 \bot n_1 \sim q_2 \bot n_2$, where we denote by $X \sim Y$ the fact that $X$ and $Y$ are isometric.

The Witt \emph{group}  $W(k)$ is then the image in  $MW(k)$ of the sub-monoïd of  $\Sym(k)$ generated by the symmetric non-degenerate bilinear forms. Given that for a non-degenerate form $q$ the orthogonal sum  $q \bot -q$  is neutral, we get indeed a group structure: the inverse of $q$ is  $-q$. In addition, the tensor product of bilinear forms endows  $W(k)$ with a commutative multiplication compatible with the orthogonal sum  and endows $W(k)$ with the structure of a unital commutative ring.

By definition there is an injection  $W(k) \rightarrow MW(k)$ and, as  $k$ is a field,  we have a canonical retraction $MW(K) \rightarrow W(k)$ called the  \emph{regularization} map;  it sends the symmetric bilinear  form $(P,q)$  onto the induced form on the quotient $P/P^\bot$, where $P^\bot = \ker q$ is the \emph{radical} of $\alpha$.  For a general symmetric bilinear form  $(P,q)$, its class $\overline{(P,q)} \in W(k)$ will always refer to the class of its regularized form. Finally, given a unit $a \in k^{\times}$ we will denote by $\langle a \rangle$ the bilinear form on $k$ with associated matrix $[a]$.

The ring  $W(k)$ has a unique maximal ideal $I$ such that $W(k)/I \simeq \mathbb{Z}/2$, its \emph{fundamental ideal}, which is the kernel of the map "rank mod $2$": $I = \ker (W(k) \rightarrow \Z/2)$, that sends a bilinear form onto the mod $2$ reduction of the dimension of its support; as neutral forms have even rank this is indeed a well-defined map. It is known that the fundamental ideal  $I$  is generated by the Pfister forms $\langle 1,-\lambda\rangle = \langle 1\rangle \bot \langle -\lambda\rangle$, for $\lambda$ a unit in  $k$,  \cite[p. 316]{Lam05}.

In this work we will be more particularly interested in the quotient $W(k)/I^2$, which is by definition part of an extension of abelian groups
\[
\xymatrix{
	0 \ar[r] & I/I^2 \ar[r] & W(k)/I^2 \ar[r] & \mathbb{Z}/2 \ar[r] & 0. & (\ast)
}
\]

The kernel  $I/I^2$ is isomorphic to the multiplicative group of units in $k$ up to the squares, $I/I^2 = k^{\times}/(k^{\times})^2$ via the discriminant map, $\operatorname{dis}: W(k) \longrightarrow  k^\ast/(k^\ast)^2$ that sends a non-degenerate bilinear form  $q$ of rank $r$ onto $(-1)^{\frac{r(r-1)}{2}}\operatorname{det}(q)$. An elementary but important fact for the present work is that 
this exact sequence does not usually split. For instance, by the Gauss algorithm for reducing quadratic forms,   $W(\mathbb{R} ) \simeq \mathbb{Z}$, then $I = 2\mathbb{Z}$ and in this case $(\ast)$ is:
\[
\xymatrix{
	0 \ar[r] & \mathbb{Z}/2 \ar[r] & \mathbb{Z}/4 \ar[r] & \mathbb{Z}/2 \ar[r] & 0.
}
\]

Even so, for any field $k$  the pull-back of the extension $(\ast)$ along the canonical map $\mathbb{Z}/4 \rightarrow \mathbb{Z}/2$ always splits.

\[
\xymatrix{
	0 \ar[r] & I/I^2 \ar[r] \ar@{=}[d] & \widetilde{W} \ar[r] \ar[d] & \mathbb{Z}/4 \ar[d] \ar[r] \ar@{..>}[dl] & 0 \\
	0 \ar[r] & I/I^2 \ar[r] & W(k)/I^2 \ar[r] & \mathbb{Z}/2 \ar[r] & 0.
}
\]

A section of the pull-back is induced by the morphism  $\mathbb{Z}/4 \rightarrow W(k)/I^2$ that sends $n \in \mathbb{Z}/4$ to the diagonal form  $n\langle 1\rangle$. To check that this map is well-defined, observe that for all integers $m$ the rank  of $4m\langle 1 \rangle$ is even, and its discriminant is  $(-1)^{\frac{4m\times (4m-1)}{2}}=1$.

Combined with the isomorphism  $I/I^2 \simeq k^{\times}/(k^{\times})^2$, the section induces a surjective morphism of groups::
\[
\begin{array}{rcl}
F: k^\times/(k^\times)^2 \oplus \mathbb{Z}/4 & \longrightarrow & W(k)/I^2 \\
(\lambda, n) & \longmapsto & \langle 1,-\lambda\rangle \oplus n\langle 1 \rangle.
\end{array}
\]
We will use this map in our explicit computations of the trivialization of the Maslov cocycle.

%%%%%%%%%%%%%%%%%%%%%%%%%%%%%%%%%%%%%
\subsection{Lagrangian combinatorics}\label{subsec:comlagr}
%%%%%%%%%%%%%%%%%%%%%%%%%%%%%%%%%%%%%
Fix an integer $g \geq 1$ and let  $L = k^g$. Denote by $H(L)$ the $k$-vector space $L\oplus L^\ast$ together with the alternating bilinear form $\omega$, known as the \emph{symplectic form},
\[
 \omega((x,\xi),(y,\eta)) = \xi(y) - \eta(x).
\]

The group of isometries of  $\omega$ is by definition the symplectic group $Sp_{2g}(k)$.  Recall that a  \emph{Lagrangian}  in the symplectic space  $H(L)$ is a subvector space   $\Lambda \subset H(L)$ that coincides with its own orthogonal $\Lambda = \Lambda^\bot$.  In the set  $\mathcal{L}$ of  Lagrangians in $H(L)$, we have two canonical elements:  $L$ and $L^\ast$. If $X$ and $X'$  are two subvector spaces in $H(L)$ we will say that  $X$ and $X'$ are \emph{transverse} if and only if $X + X' = H(L)$, we will then write $X \pitchfork X'$. 

The symplectic group has a natural left action on the set $\mathcal{L}$ that is transitive, cf. for instance \cite[\S 2]{Souriau}, the proof for  $k= \mathbb{R}$ is valid for any field. Let $\mathcal{S}_L$ denote the point-wise stabilizer of the Lagrangian $L$; by direct inspection elements in this group  when written by blocks according to the decomposition $H(L)= L \oplus L^\ast$, have the shape
\[
E(q) = 
\begin{pmatrix}
1 & q \\
0 & 1
\end{pmatrix},
\]
where $q: L^\ast \rightarrow L$ satisfies, $q = q^\ast$. In particular $\mathcal{S}_L$ is canonically isomorphic to the $k$-vector space of symmetric bilinear forms with support $L^\ast$. Similarly, $\mathcal{S}_{L^\ast}$, the point-wise stabilizer of $L^\ast$, consists of those matrices of the form

\[
E(q') = 
\begin{pmatrix}
  1 & 0 \\
  q' & 1
\end{pmatrix}.
\]
where $q': L \rightarrow L^\ast$ is symmetric and is isomorphic to the vector space of symmetric bilinear forms with support $L$.

Let  $\Lambda$ be a fixed Lagrangian, and denote by $\mathcal{L}_{\pitchfork \Lambda}$ the set of those Lagrangians that are transverse to  $\Lambda$. Because the canonical action of the symplectic group preserves transversality, we have induced canonical actions  of $\mathcal{S}_L$ on $\mathcal{L}_{\pitchfork L}$ and of $\mathcal{S}_{L^\ast}$ on $\mathcal{L}_{\pitchfork L^\ast}$. These actions are both simple transitive, and hence the bijections
\[
\begin{array}{rclcrcl}
\mathcal{S}_L & \longrightarrow & \mathcal{L}_{\pitchfork L} & &  \mathcal{S}_ {L^\ast} & \longrightarrow & \mathcal{L}_{\pitchfork L^\ast}  \\
\left(\begin{smallmatrix} 
1 & q \\
0 & 1
\end{smallmatrix}
\right) & \longmapsto & \left(\begin{smallmatrix} 
1 & q \\
0 & 1
\end{smallmatrix}
\right) L^\ast & 
   & 
\left(\begin{smallmatrix} 
1 & 0 \\
q & 1
\end{smallmatrix} \right)
 & \mapsto & \left(\begin{smallmatrix} 
1 & 0 \\
q & 1
\end{smallmatrix}
\right) L.
\end{array}
\]
endow the sets $\mathcal{L}_L$ and $\mathcal{L}_{L^\ast}$ of the structure of an affine set over the corresponding stabilizer group. If we fix an arbitrary Lagrangian $\Lambda$, the above discussion leads to two different affine structures on $\mathcal{L}_{\pitchfork \Lambda}$ over the vector space of symmetric bilinear forms with support $\Lambda^\ast$: one where elements in $\mathcal{S}_{\Lambda}$ are written according to the decomposition $H = \Lambda \oplus \Lambda^\ast$, and one where they are written according to the decomposition $H= \Lambda^\ast \oplus \Lambda$. To fix this ambiguity we chose the action corresponding to the decomposition $\Lambda^\ast \oplus \Lambda$. As a consequence, when considering bilinear forms with support $L$, we have to conjugate the described action of $\mathcal{S}_{L^\ast}$  on $\mathcal{L}_{\pitchfork L^\ast}$ by the symplectic matrix $ \left( \begin{smallmatrix}
0 & Id \\
-Id & 0
\end{smallmatrix} \right)$
and this introduces a sign; with this convention the Lagrangian $\left(\begin{smallmatrix} 
1 & 0 \\
q & 1
\end{smallmatrix}
\right) L$ is the translation of $L$ along $-q$ and  $\left(\begin{smallmatrix} 
	1 & p \\
	0 & 1
\end{smallmatrix}
\right) L^\ast$ is the translation of $L^\ast \in \mathcal{L}_{L}$ by $p$.
% To be precise, if $M \in \mathcal{L}_{\pitchfork \Lambda}$, consider   the following canonical composite  isomorphism
%\[
%\xymatrix{
%\phi_M:	M \ar@{^{(}->}[r] & H(L) \ar@{->>}[r] & H(L)/\Lambda \simeq  \Lambda^\ast.
%}
%\]
%If $q: \Lambda^\ast \rightarrow \Lambda$ is a symmetric bilinear form, then the translation of $M$ along $q$ is 
%Under this identification, again writing symplectic matrices by blocks according to the decomposition $H(L) =M\oplus \Lambda$ and by a slight abuse of notation, for any symmetric map $q : \Lambda^\ast \rightarrow \Lambda$ the following matrix belongs to  $Sp_{2g}(k)$
%\[
%\left(
%\begin{matrix}
%1 & 0 \\
%q & 1
%\end{matrix}
%\right)
%\]
%and the image of the point $M$ translated by the vector $q$ is then $E(q)M$. We point out that with this convention, in $H= L \oplus L^\ast$,  if $q :L^\ast \rightarrow L$ is symmetric, then the  Lagrangian transverse to the canonical  Lagrangian  $L$,
%\[
%\left(
%\begin{matrix}
%1 & q \\
%0 & 1
%\end{matrix}
%\right)L^\ast
%\]
%is  the image of  $L^\ast$ by the translation along $-q$.

The affine space structure tells us that given two elements  $M,N \in \mathcal{L}_{\pitchfork \Lambda}$, their difference $M-N= d(N,M)$ is a well-defined symmetric bilinear form with support $\Lambda^\ast$. For any two Lagrangians there is yet another natural map.

%%%%%%%%%%%%%%%%%%%%%%%%%%%%%%%%%%%%%
\begin{lemma}\label{lem bildeuxlag}
	%%%%%%%%%%%%%%%%%%%%%%%%%%%%%%%%%%%%%
	Let $\Lambda$ and $M$ be two lagangians. The evaluation map
	\[
	\begin{array}{rcl}
	e_{\Lambda,M}: \Lambda \times M & \longrightarrow & k \\
	(\ell,m) & \longmapsto & \omega(\ell,m)
	\end{array}
	\]
	is a bilinear form, it induces by adjunction a linear map
	$\beta_{\Lambda,M} : \Lambda \rightarrow M^\ast$ that satisfies the following properties:
	\begin{enumerate}
		\item $\beta_{\Lambda,M} = -\beta_{M,\Lambda}^\ast$
		\item $\mathrm{Im }\  \beta_{\Lambda,M}= H(L)/(\Lambda+M)$
		\item $\ker \beta_{\lambda,M} = \Lambda \cap M$
		\item $\beta_{\Lambda,M}$  is invertible if and only if $\Lambda \pitchfork M$.
		\item If $ \phi \in Sp_{2g}(k)$, then $  \beta_{\Lambda,M} = (\phi\vert_{M})^\ast \circ \beta_{\phi \Lambda,\phi M} \circ {\phi\vert_\Lambda}$
	\end{enumerate}
	
\end{lemma}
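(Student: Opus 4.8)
The plan is to prove each of the five properties of Lemma~\ref{lem bildeuxlag} by unwinding the definition of $\omega$ and the adjunction. Recall that $\beta_{\Lambda,M}\colon \Lambda \to M^\ast$ is characterized by $\beta_{\Lambda,M}(\ell)(m) = \omega(\ell,m)$ for $\ell \in \Lambda$, $m \in M$. Property (1) is then immediate from the antisymmetry of $\omega$: for $\ell \in \Lambda$ and $m \in M$ one has $\beta_{\Lambda,M}(\ell)(m) = \omega(\ell,m) = -\omega(m,\ell) = -\beta_{M,\Lambda}(m)(\ell) = -\beta_{M,\Lambda}^\ast(\ell)(m)$ once we identify $\Lambda$ with $\Lambda^{\ast\ast}$; so I would simply spell this identification out and be done.

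\medskip

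For properties (2) and (3) the key input is that $\omega$ is non-degenerate and that $\Lambda = \Lambda^\bot$. First I would compute the kernel: $\ell \in \ker\beta_{\Lambda,M}$ iff $\omega(\ell,m)=0$ for all $m\in M$, i.e. iff $\ell \in M^\bot = M$ (using $M$ Lagrangian); since $\ell$ is already in $\Lambda$, this says exactly $\ell \in \Lambda \cap M$, giving (3). For (2), I would argue either by a rank count or directly by duality: the image of $\beta_{\Lambda,M}$ in $M^\ast$ is the annihilator of $\{m \in M : \omega(\ell,m)=0 \ \forall \ell \in \Lambda\} = M \cap \Lambda^\bot = M\cap\Lambda$, so $\operatorname{Im}\beta_{\Lambda,M} \simeq M^\ast/(M\cap\Lambda)^{\mathrm{ann}} \simeq (M/(M\cap\Lambda))^\ast$. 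Then I would identify this with $H(L)/(\Lambda+M)$: the non-degenerate pairing $\omega$ identifies $H(L)^\ast$ with $H(L)$ and sends the annihilator of $\Lambda+M$ to $(\Lambda+M)^\bot = \Lambda^\bot \cap M^\bot = \Lambda \cap M$, whence $\bigl(H(L)/(\Lambda+M)\bigr)^\ast \simeq \Lambda\cap M$ and, dualizing once more, $H(L)/(\Lambda+M) \simeq (\Lambda\cap M)^\ast$. Comparing with $M/(M\cap\Lambda) \hookrightarrow H(L)/\Lambda$ whose image is $(\Lambda+M)/\Lambda \simeq H(L)/(\text{something})$ — here I would use that $\dim M = \dim \Lambda = g$ and $\dim H(L) = 2g$, so $\dim(\Lambda+M) + \dim(\Lambda\cap M) = 2g$ forces $M/(M\cap\Lambda)$ and $H(L)/(\Lambda+M)$ to have the same dimension, and the pairing $\omega$ identifies them dually. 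The cleanest route is probably: $\beta_{\Lambda,M}$ factors as $\Lambda \twoheadrightarrow \Lambda/(\Lambda\cap M) \hookrightarrow M^\ast$ with image the annihilator of $\Lambda\cap M \subseteq M$, i.e. $\bigl(M/(\Lambda\cap M)\bigr)^\ast$, and then exhibit the isometry $M/(\Lambda\cap M) \simeq \bigl(H(L)/(\Lambda+M)\bigr)^\ast$ via $\omega$, so $\operatorname{Im}\beta_{\Lambda,M} \simeq H(L)/(\Lambda+M)$.

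\medskip

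Property (4) is then a formal consequence: $\beta_{\Lambda,M}$ is an isomorphism iff its kernel is $0$ and its image is all of $M^\ast$; by (3) the kernel vanishes iff $\Lambda\cap M = 0$, and by (2) the image is everything iff $H(L) = \Lambda + M$; since $\dim\Lambda+\dim M = 2g = \dim H(L)$, either of these conditions is equivalent to $\Lambda\pitchfork M$. Property (5) is a naturality/equivariance check: for $\phi\in Sp_{2g}(k)$ and $\ell\in\Lambda$, $m\in M$, one has $\omega(\ell,m) = \omega(\phi\ell,\phi m) = \beta_{\phi\Lambda,\phi M}(\phi\ell)(\phi m)$, so $\beta_{\Lambda,M}(\ell) = (\phi|_M)^\ast\bigl(\beta_{\phi\Lambda,\phi M}(\phi\ell)\bigr)$, which is precisely $\beta_{\Lambda,M} = (\phi|_M)^\ast \circ \beta_{\phi\Lambda,\phi M}\circ (\phi|_\Lambda)$. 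The only genuine obstacle is keeping the canonical identifications $\Lambda\simeq\Lambda^{\ast\ast}$, $H(L)\simeq H(L)^\ast$ (via $\omega$), and the annihilator/orthogonal-complement dictionary straight in part (2); everything else is bookkeeping with the defining bilinear form. I would therefore front-load a short paragraph fixing these identifications and then let (1), (3), (4), (5) fall out in a line each, spending the bulk of the write-up on the dimension/duality argument for (2).
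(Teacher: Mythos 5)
Your verifications of parts (1), (3), (4) and (5) are correct and are exactly the ``immediate computations'' that the paper's one-line proof leaves to the reader. The problem is part (2). The pivotal claim in your argument --- that $\dim(\Lambda+M)+\dim(\Lambda\cap M)=2g$ ``forces $M/(M\cap\Lambda)$ and $H(L)/(\Lambda+M)$ to have the same dimension'' --- is false: one has $\dim\bigl(M/(M\cap\Lambda)\bigr)=g-\dim(\Lambda\cap M)$, whereas $\dim\bigl(H(L)/(\Lambda+M)\bigr)=2g-\dim(\Lambda+M)=\dim(\Lambda\cap M)$, and these agree only when $2\dim(\Lambda\cap M)=g$. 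For the same reason the ``clean route'' at the end, which asks for an isomorphism $M/(\Lambda\cap M)\simeq\bigl(H(L)/(\Lambda+M)\bigr)^\ast$, cannot exist in general (take $\Lambda=M$: the left-hand side is $0$ and the right-hand side has dimension $g$). There is also a smaller slip in the chain $\operatorname{Im}\beta_{\Lambda,M}\simeq M^\ast/(M\cap\Lambda)^{\mathrm{ann}}$: your own (correct) computation shows that the image \emph{is} the annihilator $(M\cap\Lambda)^{\mathrm{ann}}\subseteq M^\ast$, not the quotient of $M^\ast$ by it.

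What your correct computation actually establishes is that $\operatorname{Im}\beta_{\Lambda,M}$ has dimension $g-\dim(\Lambda\cap M)$, while $H(L)/(\Lambda+M)$ has dimension $\dim(\Lambda\cap M)$; so the equality printed in item (2) is dimensionally impossible except in the special case $2\dim(\Lambda\cap M)=g$, and must be read as a statement about the \emph{cokernel}. The honest way to finish is to note that $\omega$ induces a canonical isomorphism $H(L)/M\simeq M^\ast$ (the map $x\mapsto\omega(x,-)\vert_M$ is surjective by non-degeneracy of $\omega$ and has kernel $M^\bot=M$), under which $\beta_{\Lambda,M}$ becomes the composite $\Lambda\hookrightarrow H(L)\twoheadrightarrow H(L)/M$; its image is then $(\Lambda+M)/M$ and its cokernel is $H(L)/(\Lambda+M)$. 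You should prove that corrected statement (a one-line computation with the identification just given) rather than ``derive'' the printed one via a dimension count that does not hold; note that part (4), which is all that is used later, survives intact since it only needs part (3) together with $\dim\Lambda=\dim M^\ast=g$ and the equivalence $\Lambda\cap M=0\Leftrightarrow\Lambda+M=H(L)$.
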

%%%%%%%%%%%%%%%%%%%%%%%%%%%%%%%%%%%%%%
\begin{proof}
 These are all immediate computations.
\end{proof}

The following result explains the links between the morphisms $\beta$ and $d$.
%%%%%%%%%%%%%%%%%%%%%%%%%%%%%%%%%%%%%
\begin{lemma}\label{lem reldiffbeta}
	%%%%%%%%%%%%%%%%%%%%%%%%%%%%%%%%%%%%%
	Let $X$ be a Lagrangian in $H(L)$, and let $\Lambda,M \in\mathcal{L}_{\pitchfork X}$. The difference of this two Lagrangians is $d_X(\Lambda,M) \in \mathcal{S}_{X^\ast}$ and we have that
	\[
	d_X(\Lambda,M) = - (\beta_{X,M})^{-1}\beta_{\Lambda,M}\beta_{\Lambda,X}^{-1}
	\]
\end{lemma}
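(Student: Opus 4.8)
The plan is to reduce the identity $d_X(\Lambda,M) = -(\beta_{X,M})^{-1}\beta_{\Lambda,M}\beta_{\Lambda,X}^{-1}$ to a direct matrix computation after choosing a convenient symplectic basis. Since all three terms are natural in $\phi\in Sp_{2g}(k)$ — the left side because the affine structure is equivariant and the right side by property (5) of Lemma~\ref{lem bildeuxlag} — and since $Sp_{2g}(k)$ acts transitively on the set of Lagrangians, I would first use an element of the symplectic group to move $X$ to the standard Lagrangian $L^\ast$ (or, matching the convention fixed in Subsection~\ref{subsec:comlagr}, to whichever of $L,L^\ast$ makes the affine normalization $\Lambda^\ast\oplus\Lambda$ come out cleanly). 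This is the one place where one must be careful about the sign convention the authors adopted, namely that $\mathcal{S}_{L^\ast}$ acts on $\mathcal{L}_{\pitchfork L^\ast}$ through conjugation by $\left(\begin{smallmatrix}0 & Id\\ -Id & 0\end{smallmatrix}\right)$ and that $\left(\begin{smallmatrix}1 & 0\\ q & 1\end{smallmatrix}\right)L$ is the translate of $L$ by $-q$.

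With $X$ standardized, I would then write $\Lambda$ and $M$ as graphs of symmetric forms over $X$: say $\Lambda = \left(\begin{smallmatrix}1 & 0\\ -a & 1\end{smallmatrix}\right)L$ and $M = \left(\begin{smallmatrix}1 & 0\\ -b & 1\end{smallmatrix}\right)L$ (signs chosen to match the translation convention), so that by definition $d_X(\Lambda,M) = b-a$ as a symmetric bilinear form with support $X^\ast = L$. Next I would compute each $\beta$ explicitly from its definition $\beta_{Y,Z}(\ell) = \omega(\ell,-)\restr_Z$: parametrizing $\Lambda$ by $y\mapsto(y,-ay)$ and $M$ by $z\mapsto(z,-bz)$, and $X=L$ by $x\mapsto(x,0)$, the symplectic form $\omega((x,\xi),(y,\eta))=\xi(y)-\eta(x)$ immediately gives $\beta_{\Lambda,X}$, $\beta_{\Lambda,M}$ and $\beta_{X,M}$ as small block matrices whose entries are built from $a$, $b$ and identity maps. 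In particular $\beta_{\Lambda,X}$ and $\beta_{X,M}$ come out invertible (this is Lemma~\ref{lem bildeuxlag}(4), reflecting $\Lambda\pitchfork X$ and $X\pitchfork M$), so the right-hand side of the claimed formula is well-defined.

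Finally I would multiply out $-(\beta_{X,M})^{-1}\beta_{\Lambda,M}\beta_{\Lambda,X}^{-1}$ and check it equals $b-a$, identifying domains and codomains via the canonical identifications $X\simeq X^{\ast\ast}$ that are implicit throughout. I expect the substantive obstacle to be purely bookkeeping: keeping the dualizations straight (each $\beta$ goes $\Lambda\to M^\ast$, etc., so the composite naturally lands in $\mathrm{Hom}(X,X^\ast)$ and one must see it is symmetric and equals the affine difference), and making all signs consistent with the normalization choices of Subsection~\ref{subsec:comlagr}. There is no conceptual difficulty once the basis is fixed; the equivariance reduction at the start is what makes the computation a finite, one-case check rather than a general argument, and the authors' remark that such statements are ``immediate computations'' applies here as well.
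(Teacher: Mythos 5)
Your route is sound but genuinely different from the paper's. The paper never standardizes $X$: it works intrinsically with the exact sequence $X\to H(L)\to X^\ast$, observes that transversality turns $\beta_{\Lambda,X}$ and $\beta_{M,X}$ into two sections $i_\Lambda\circ\beta_{\Lambda,X}^{-1}$ and $i_M\circ\beta_{M,X}^{-1}$ of $p_X$, identifies $d_X(\Lambda,M)$ with the difference of these sections, and then evaluates the sandwich $\beta_{X,M}\,d_X(\Lambda,M)\,\beta_{\Lambda,X}$ on $\ell\in\Lambda$ to get $\mu\mapsto\omega(m-\ell,\mu)=-\omega(\ell,\mu)=-e_{\Lambda,M}(\ell,\mu)$, which is exactly $-\beta_{\Lambda,M}$. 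That argument is basis-free and sidesteps the question of which of $L,L^\ast$ to normalize to; its only delicate point is matching the intrinsic description of the affine difference with the sign convention of Subsection~2.2. Your equivariance-plus-coordinates reduction is equally valid (both sides transform identically by Lemma~\ref{lem bildeuxlag}(5) and the equivariance of the affine structure, and the action on Lagrangians is transitive), and it buys a completely mechanical finite check; what it costs is exactly the convention-tracking you flag.

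One concrete slip to repair before the computation goes through: your coordinates are internally inconsistent. If $\Lambda=\left(\begin{smallmatrix}1&0\\-a&1\end{smallmatrix}\right)L=\{(y,-ay)\}$ and $M=\{(z,-bz)\}$ are graphs over $L$, these are the Lagrangians transverse to $L^\ast$, not to $L$ (such a graph meets $L$ in $\ker a\times\{0\}$), so you must take $X=L^\ast$ parametrized by $\xi\mapsto(0,\xi)$, not ``$X=L$ by $x\mapsto(x,0)$''; correspondingly $X^\ast\simeq L$ holds because $X=L^\ast$, not $X=L$. With that fixed, the three $\beta$'s come out (up to the canonical identifications) as $\pm\mathrm{id}$ and $\beta_{\Lambda,M}=\pm(a-b)$, and the product does reproduce the affine difference with the paper's sign.
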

%%%%%%%%%%%%%%%%%%%%%%%%%%%%%%%%%%%%
\begin{proof} Consider the commutative diagram
	\[
	\xymatrix{
		& M \ar[d]_{i_M} \ar[dr]^{\beta_{M,X}}   \\
		X \ar[r]^-{i_X} & H(L) \ar[r]^-{p_X} & X^\ast. \\
		& \Lambda \ar[u]^{i_\Lambda} \ar[ur]_{\beta_{\Lambda,X}} & \\ 
	}
	\]
	Since $X$ is transverse to both $M$ and $\Lambda$, the morphisms $\beta_{M,X}$ and $\beta_{\Lambda,X}$ are invertible, and this gives us two sections of the middle exact sequence: $\sigma_M = i_M \circ \beta_{M,X}^{-1}$ and $\sigma_\Lambda: i_\Lambda\circ \beta_{\Lambda,X}^{-1}$. Because $\Lambda$ and $M$ are both Lagrangiens, the symplectic form pulled-back along any of the above two sections to $X^\ast$ is the zero form.
	
	Then, by definition, $d_X(\Lambda,M) =  i_M \circ \beta_{M,X}^{-1}-  i_\Lambda\circ \beta_{\Lambda,X}^{-1}: X^\ast \rightarrow X$. More explicitly, if $\alpha \in X^\ast$, there exists a unique couple  $(m,\ell) \in M \times  \Lambda$ such that $\alpha = \omega(m,-) = \omega(\ell,-)$ and $d_X(\Lambda,M)(\alpha) = m-\ell$. By direct computation, th map 
	\[
	\beta_{X,M} d_X(\lambda,M)\beta_{\Lambda,X}: \Lambda \rightarrow M^\ast
	\]
	sends and elemnt $\ell \in \Lambda$ to the linear map that on $\mu \in M$ evaluates to $\omega(m-\ell,\mu) = -\omega(\ell,\mu)$.
%	\[
%	\begin{array}{rcl}
%	\beta_{X,M} d_X(\lambda,M)\beta_{\Lambda,X}: \Lambda & \longrightarrow &  M^\ast \\
%	\ell & \longmapsto & \mu \leadsto \omega(m-\ell,\mu)= -\omega(\ell,\mu)
%	\end{array}
%	\] 
%	and $\beta_{X,M} d_X(\lambda,M)\beta_{\Lambda,X} = - \beta_{\Lambda,M}$.

	Otherwise said, we have a commutative diagram
	\[
	\xymatrix{
		X^\ast \times X^\ast \ar[r]^-d & k \\
		\Lambda \times M \ar[u]^{\beta_{\Lambda,X} \times  \beta_{M,X}} \ar[r]^-e_{\Lambda,M} & k \ar@{=}[u]
	}
	\]
	where $d: X^\ast \times X^\ast \rightarrow R$ is the adjoint to $d_X(\Lambda,M): X^\ast \rightarrow X$.
\end{proof}

\begin{lemma}\label{lem projbeta}
 Let  $\Lambda_1$ and $\Lambda_2$ be two mutually transverse Lagrangians and $X$ an arbitrary Lagrangian. Denote by $p_1$  (resp. $p_2$ ) the projection map from $X$ onto $\Lambda_1$ (resp. $\Lambda_2$) parallel to $\Lambda_2$ (resp. $\Lambda_1$). Then
	\[
	p_1 = (\beta_{\Lambda_1,\Lambda_2})^{-1}\beta_{X,\Lambda_2} \textrm{ and } p_2 = (\beta_{\Lambda_2,\Lambda_1})^{-1}\beta_{X,\Lambda_1}.
	\]
\end{lemma}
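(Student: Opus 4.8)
The plan is to compute the two claimed identities by chasing the defining commutative square of the map $\beta$ through the decomposition $H(L) = \Lambda_1 \oplus \Lambda_2$, using the properties already recorded in Lemma \ref{lem bildeuxlag}. First I would write any $x \in X$ uniquely as $x = p_1(x) + p_2(x)$ with $p_1(x) \in \Lambda_1$ and $p_2(x) \in \Lambda_2$; this is legitimate since $\Lambda_1 \pitchfork \Lambda_2$. The key observation is that the restriction of $\beta_{X,\Lambda_2}$ to $X$ only ``sees'' the $\Lambda_1$-component: indeed for $m \in \Lambda_2$ we have $\omega(p_2(x), m) = 0$ because $\Lambda_2 = \Lambda_2^\bot$ is Lagrangian, so $\beta_{X,\Lambda_2}(x) = \omega(x,-)|_{\Lambda_2} = \omega(p_1(x),-)|_{\Lambda_2}$. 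In other words, on the level of the evaluation forms, $e_{X,\Lambda_2}$ factors as $e_{\Lambda_1,\Lambda_2} \circ (p_1 \times \id_{\Lambda_2})$.

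Passing to adjoints, this factorization reads $\beta_{X,\Lambda_2} = \beta_{\Lambda_1,\Lambda_2} \circ p_1$ as maps $X \to \Lambda_2^\ast$, where $\beta_{\Lambda_1,\Lambda_2}\colon \Lambda_1 \to \Lambda_2^\ast$ is the adjoint of $e_{\Lambda_1,\Lambda_2}$. Since $\Lambda_1 \pitchfork \Lambda_2$, part (4) of Lemma \ref{lem bildeuxlag} guarantees that $\beta_{\Lambda_1,\Lambda_2}$ is invertible, so we may left-multiply by its inverse to obtain $p_1 = (\beta_{\Lambda_1,\Lambda_2})^{-1} \beta_{X,\Lambda_2}$. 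The formula for $p_2$ is obtained by the identical argument with the roles of $\Lambda_1$ and $\Lambda_2$ interchanged, using that $\beta_{\Lambda_2,\Lambda_1}$ is invertible by the same transversality hypothesis; alternatively one can deduce it from the first formula together with $p_1 + p_2 = \id_X$ and the relation $\beta_{\Lambda_1,\Lambda_2} = -\beta_{\Lambda_2,\Lambda_1}^\ast$ of part (1), though doing it directly is cleaner.

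I do not expect any serious obstacle here: the statement is essentially a bookkeeping identity about how the symplectic pairing interacts with a Lagrangian direct-sum decomposition, and every ingredient — the vanishing of $\omega$ on each Lagrangian summand, and the invertibility of $\beta_{\Lambda_i,\Lambda_j}$ under transversality — is already in place. The only point requiring a little care is keeping the source and target of each $\beta$ straight (which space is the domain, which is the codomain's dual) so that the composition $(\beta_{\Lambda_1,\Lambda_2})^{-1}\beta_{X,\Lambda_2}$ genuinely lands in $\Lambda_1$ and agrees with the geometric projection; this is precisely the content of the factorization $e_{X,\Lambda_2} = e_{\Lambda_1,\Lambda_2}\circ(p_1\times\id)$ established in the first step.
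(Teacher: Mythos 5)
Your argument is correct and is exactly the ``direct computation'' that the paper leaves to the reader: decomposing $x=p_1(x)+p_2(x)$, using that $\omega$ vanishes on the Lagrangian $\Lambda_2$ to get $\beta_{X,\Lambda_2}=\beta_{\Lambda_1,\Lambda_2}\circ p_1$, and inverting $\beta_{\Lambda_1,\Lambda_2}$ via transversality. Nothing is missing.
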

\begin{proof}
	Direct computation.
\end{proof}

\begin{lemma}\label{lem troislag}
	Let $\Lambda_1,\Lambda_2,\Lambda_3$ denote three Lagrangians such that $\Lambda_1 \pitchfork \Lambda_2$ and $\Lambda_2 \pitchfork \Lambda_3$. Let $X$ be an arbitrary  Lagrangien. Then
	\[
	\beta_{X,\Lambda_3}= \beta_{\Lambda_1,\Lambda_3} (\beta_{\Lambda_1,\Lambda_2})^{-1} \beta_{X,\Lambda_2}+\beta_{\Lambda_2,\Lambda_3} (\beta_{\Lambda_2,\Lambda_1})^{-1} \beta_{X,\Lambda_1} 
	\]
\end{lemma}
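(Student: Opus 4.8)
The plan is to reduce the claimed identity to a statement about the two affine structures on $\mathcal{L}_{\pitchfork \Lambda_2}$ together with the decomposition lemmas already proved. First I would observe that since $\Lambda_1 \pitchfork \Lambda_2$ and $\Lambda_3 \pitchfork \Lambda_2$, both $\Lambda_1$ and $\Lambda_3$ lie in $\mathcal{L}_{\pitchfork \Lambda_2}$, so there is a well-defined difference $d_{\Lambda_2}(\Lambda_1,\Lambda_3) \in \mathcal{S}_{\Lambda_2^\ast}$, and by Lemma~\ref{lem reldiffbeta} applied with $X = \Lambda_2$, $\Lambda = \Lambda_1$, $M = \Lambda_3$ we have
\[
d_{\Lambda_2}(\Lambda_1,\Lambda_3) = -(\beta_{\Lambda_2,\Lambda_3})^{-1}\beta_{\Lambda_1,\Lambda_3}\beta_{\Lambda_1,\Lambda_2}^{-1}.
\]
On the other hand, $d_{\Lambda_2}(\Lambda_1,\Lambda_3)$ can be computed geometrically from the projections of any vector of $X$: one writes an element of $X$ as a sum of its $\Lambda_1$-component and its $\Lambda_3$-component parallel to $\Lambda_2$, and compares with the decomposition along $\Lambda_2$ and $X$.

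The key step is then to chase the following diagram. The identity of $X$ factors, via Lemma~\ref{lem projbeta} (with $\Lambda_1,\Lambda_3$ in the roles of the two transverse Lagrangians and $\Lambda_2$ the parallel direction), as $x = p_1(x) + p_3(x)$ where $p_1 = (\beta_{\Lambda_1,\Lambda_3})^{-1}\beta_{X,\Lambda_3}$ and $p_3 = (\beta_{\Lambda_3,\Lambda_1})^{-1}\beta_{X,\Lambda_1}$. I would apply $\beta_{-,\Lambda_2}$ — more precisely, I would evaluate $\beta_{X,\Lambda_3}$ by decomposing $X \ni x = p_1(x)+p_3(x)$ with $p_1(x)\in\Lambda_1$, $p_3(x)\in\Lambda_3$, and using additivity of $\ell \mapsto \omega(\ell,-)$ on $\Lambda_3$:
\[
\beta_{X,\Lambda_3}(x) = \beta_{\Lambda_1,\Lambda_3}(p_1(x)) + \beta_{\Lambda_3,\Lambda_3}(p_3(x)).
\]
Here the second term is \emph{not} zero, so instead I symmetrize: I decompose $X$ along $\Lambda_1$ and $\Lambda_2$, write $\beta_{X,\Lambda_3}$ through that decomposition, then re-expand the $\Lambda_2$-component again along $\Lambda_1$ and $\Lambda_2$ — this is circular. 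The correct route is to decompose the identity of $X$ as $x = q_1(x) + q_2(x)$ with $q_1$ the projection onto $\Lambda_1$ parallel to $\Lambda_2$ and $q_2$ onto $\Lambda_2$ parallel to $\Lambda_1$, apply $\beta_{-,\Lambda_3}$ which is additive, get $\beta_{X,\Lambda_3} = \beta_{\Lambda_1,\Lambda_3}q_1 + \beta_{\Lambda_2,\Lambda_3}q_2$, and then substitute $q_1 = (\beta_{\Lambda_1,\Lambda_2})^{-1}\beta_{X,\Lambda_2}$ and $q_2 = (\beta_{\Lambda_2,\Lambda_1})^{-1}\beta_{X,\Lambda_1}$ from Lemma~\ref{lem projbeta}. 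This gives exactly the claimed formula.

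So the actual proof is short: the content is (i) the elementary additivity $\beta_{X,\Lambda_3} = \beta_{\Lambda_1,\Lambda_3}\circ q_1 + \beta_{\Lambda_2,\Lambda_3}\circ q_2$, valid because $x \mapsto \omega(x,-)\vert_{\Lambda_3}$ is linear in $x$ and $x = q_1(x)+q_2(x)$, and (ii) the identification of the two projections $q_1,q_2$ via Lemma~\ref{lem projbeta}. I expect the only subtlety — and the step to state carefully — is bookkeeping the direction of the projections: $q_1$ must be the projection onto $\Lambda_1$ \emph{parallel to} $\Lambda_2$ (so that $\beta_{-,\Lambda_3}$ applied to it picks out $\beta_{\Lambda_1,\Lambda_3}$), and matching this against the convention in Lemma~\ref{lem projbeta} with the roles of $\Lambda_1$ and $\Lambda_2$ in that lemma played here by $\Lambda_1$ and $\Lambda_2$ respectively. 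Once the indices are aligned, the identity is immediate and no genuine computation beyond Lemmas~\ref{lem reldiffbeta} and~\ref{lem projbeta} is needed.
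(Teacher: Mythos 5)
Your final argument is exactly the paper's proof: decompose $x\in X$ as $q_1(x)+q_2(x)$ along $\Lambda_1\oplus\Lambda_2$, use linearity of $x\mapsto\omega(x,-)\vert_{\Lambda_3}$ to get $\beta_{X,\Lambda_3}=\beta_{\Lambda_1,\Lambda_3}q_1+\beta_{\Lambda_2,\Lambda_3}q_2$, and substitute the expressions for the projections from Lemma~\ref{lem projbeta}. The false start you describe (decomposing along $\Lambda_1$ and $\Lambda_3$, which would moreover need the unassumed transversality $\Lambda_1\pitchfork\Lambda_3$) is correctly discarded, and the route you settle on is the one the paper takes.
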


\begin{proof}
	By direct inspection, with the same notations as in Lemma~\ref{lem projbeta}, 
	\[
	\beta_{X,\Lambda_3} = \beta_{\Lambda_1,\Lambda_3}p_1 + \beta_{\Lambda_2,\Lambda_3}p_2
	\]
	and replacing $p_1$ and $p_2$ by their expressions we get the result.
\end{proof}

We now arrive at the fundamental relation.

\begin{proposition}\label{prop relfond}
	With the same hypothesis as in Lemma~\ref{lem troislag},
	\[
	-(\beta_{\Lambda_2,\Lambda_1})^{-1} \beta_{X,\Lambda_1} + d_{\Lambda_2}(\Lambda_1,\Lambda_3) \beta_{X,\Lambda_2} + (\beta_{\Lambda_2,\Lambda_3})^{-1} \beta_{X,\Lambda_3} = 0.
	\]
\end{proposition}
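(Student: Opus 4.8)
The plan is to derive the fundamental relation directly from Lemma~\ref{lem troislag} by rewriting its three summands using Lemma~\ref{lem reldiffbeta}. First I would apply Lemma~\ref{lem reldiffbeta} with the arbitrary Lagrangian $X$ there replaced by $\Lambda_2$ (which is transverse to both $\Lambda_1$ and $\Lambda_3$ by hypothesis), obtaining the identity
\[
d_{\Lambda_2}(\Lambda_1,\Lambda_3) = -(\beta_{\Lambda_2,\Lambda_3})^{-1}\beta_{\Lambda_1,\Lambda_3}\beta_{\Lambda_1,\Lambda_2}^{-1}.
\]
This is the crucial algebraic input: it expresses the difference form $d_{\Lambda_2}(\Lambda_1,\Lambda_3)$, which appears in the proposition, in terms of the $\beta$-maps that appear in Lemma~\ref{lem troislag}.

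Next I would take the conclusion of Lemma~\ref{lem troislag},
\[
\beta_{X,\Lambda_3}= \beta_{\Lambda_1,\Lambda_3} (\beta_{\Lambda_1,\Lambda_2})^{-1} \beta_{X,\Lambda_2}+\beta_{\Lambda_2,\Lambda_3} (\beta_{\Lambda_2,\Lambda_1})^{-1} \beta_{X,\Lambda_1},
\]
and multiply both sides on the left by $(\beta_{\Lambda_2,\Lambda_3})^{-1}$, which is legitimate since $\Lambda_2\pitchfork\Lambda_3$. The middle term on the right becomes
\[
(\beta_{\Lambda_2,\Lambda_3})^{-1}\beta_{\Lambda_1,\Lambda_3}(\beta_{\Lambda_1,\Lambda_2})^{-1}\beta_{X,\Lambda_2} = -\,d_{\Lambda_2}(\Lambda_1,\Lambda_3)\,\beta_{X,\Lambda_2}
\]
by the substitution above, while the last term becomes $(\beta_{\Lambda_2,\Lambda_3})^{-1}\beta_{\Lambda_2,\Lambda_3}(\beta_{\Lambda_2,\Lambda_1})^{-1}\beta_{X,\Lambda_1} = (\beta_{\Lambda_2,\Lambda_1})^{-1}\beta_{X,\Lambda_1}$. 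Rearranging all three terms to one side yields exactly
\[
-(\beta_{\Lambda_2,\Lambda_1})^{-1}\beta_{X,\Lambda_1} + d_{\Lambda_2}(\Lambda_1,\Lambda_3)\beta_{X,\Lambda_2} + (\beta_{\Lambda_2,\Lambda_3})^{-1}\beta_{X,\Lambda_3} = 0,
\]
which is the asserted identity.

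The only point that requires a little care — and which I would flag as the main (mild) obstacle — is verifying that the use of Lemma~\ref{lem reldiffbeta} is valid in the form needed: that lemma is stated for $\Lambda, M$ both transverse to a Lagrangian $X$, and here I am invoking it with the roles $X\rightsquigarrow\Lambda_2$, $\Lambda\rightsquigarrow\Lambda_1$, $M\rightsquigarrow\Lambda_3$. The transversality hypotheses $\Lambda_1\pitchfork\Lambda_2$ and $\Lambda_2\pitchfork\Lambda_3$ of the present proposition are precisely what is required, so $d_{\Lambda_2}(\Lambda_1,\Lambda_3)$ is well-defined and the formula applies. One should also note that no non-degeneracy issue arises for the factors being inverted, since every $\beta$-map that we invert ($\beta_{\Lambda_2,\Lambda_1}$, $\beta_{\Lambda_2,\Lambda_3}$, $\beta_{\Lambda_1,\Lambda_2}$) corresponds to a transverse pair, hence is invertible by part~(4) of Lemma~\ref{lem bildeuxlag}. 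Beyond that the argument is a purely formal manipulation of composable linear maps, so the proof is short.
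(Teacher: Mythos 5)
Your proof is correct and is essentially identical to the paper's: both left-compose the identity of Lemma~\ref{lem troislag} with $(\beta_{\Lambda_2,\Lambda_3})^{-1}$ and then substitute $d_{\Lambda_2}(\Lambda_1,\Lambda_3) = -(\beta_{\Lambda_2,\Lambda_3})^{-1}\beta_{\Lambda_1,\Lambda_3}(\beta_{\Lambda_1,\Lambda_2})^{-1}$ from Lemma~\ref{lem reldiffbeta}. Your explicit check of the transversality hypotheses is a welcome clarification (and your version even corrects a small typo in the paper, which writes $d_{\Lambda_2}(\Lambda_1,\Lambda_2)$ where $d_{\Lambda_2}(\Lambda_1,\Lambda_3)$ is meant).
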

\begin{proof}
	By composing the relation from Lemma~\ref{lem troislag} by $(\beta_{\Lambda_2,\Lambda_3})^{-1}$ to the left we get:
	\[
	(\beta_{\Lambda_2,\Lambda_3})^{-1} \beta_{X,\Lambda_3} - (\beta_{\Lambda_2,\Lambda_1})^{-1} \beta_{X,\Lambda_1} - (\beta_{\Lambda_2,\Lambda_3})^{-1}\beta_{\Lambda_1,\Lambda_3} (\beta_{\Lambda_1,\Lambda_2})^{-1} \beta_{X,\Lambda_2} = 0
	\]
	And from Lemma~\ref{lem reldiffbeta}
	\[
	- (\beta_{\Lambda_2,\Lambda_3})^{-1}\beta_{\Lambda_1,\Lambda_3} (\beta_{\Lambda_1,\Lambda_2})^{-1}= d_{\Lambda_2}(\Lambda_1,\Lambda_2)
	\]
	whence the result.
\end{proof}

%%%%%%%%%%%%%%%%%%%%%%%%%%%%%%%%%%%%%
\section{Lagrangian paths and Sturm sequences}\label{sec:chemlagr}
%%%%%%%%%%%%%%%%%%%%%%%%%%%%%%%%%%%%%

In this section we reformulate some results from Appendix A in~\cite{MR2431916} in terms of Lagrangian paths based at either $L$ or $L^\ast$.
% We adopt from now on the following convention: for any $j \in \mathbb{Z}$, the notation   $L_j$ will always refer to  the Lagrangian $L$ if $j = 0 \textrm{ mod } 2$ and to $L^\ast$ if $j = 1 \textrm{ mod } 2$.
%
%
%%A reduced word in the free product $\mathcal{S}_L \ast \mathcal{S}_{L^\ast}$  is by definition a sequence of symmetric linear maps $\underline{q}: (q_m,q_{m+1}, \dots,q_n)$ where $q_j: L_j \rightarrow L_{j+1}$. Such a sequence is said to be of \emph{type}  $(m,n)$ and is called a  \emph{Sturm sequence}. Barge-Lannes~\cite{MR2431916} associate to this sequence   a symmetric linear map with support $L_m \oplus L_{m+1} \oplus \cdots \oplus L_n$ defined by the following Sylvester matrix
%%\[
%%S(\underline{q}) = \left(
%%\begin{matrix}
%%(-1)^mq_m  & 1 & 0 & \cdots & 0  \\
%%1 & (-1)^{m+1}q_{m+1} & 1 &   \ddots & \vdots  \\
%%0 &  1 & \ddots & \ddots & 0\\
%%\vdots  & \ddots  &\ddots & (-1) ^{n-1}q_{n-1}& 1 \\
%%0 & \cdots & 0&  1 & (-1)^nq_n
%%\end{matrix}
%%\right)
%%\]
%
%The difference map $d(M,N)$ and the map  $\beta$ defined in the previous section allow us to generalize this construction to matrices with more general supports:
%%%%%%%%%%%%%%%%%%%%%%%%%%%%%%%%%%%%%
\begin{definition}
	%%%%%%%%%%%%%%%%%%%%%%%%%%%%%%%%%%%%%
	Let $\Lambda$ and $M$  be two Lagrangians. A \emph{Lagrangian path} of length $n$   joining $\Lambda$ to $M$ is a sequence of $n+2$ Lagrangians
	\[
	\Lambda = \Lambda_0,\Lambda_1, \dots, \Lambda_n,\Lambda_{n+1} =M
	\]
	such that for each  $0 \leq i \leq n$ we have $ \Lambda_i \pitchfork \Lambda_{i+1}$.
	
	If $\Lambda = M$ we call it a  \emph{Lagrangian loop}  based at $\Lambda$.

	If $\alpha = \Lambda_0, \dots,\Lambda_n,M$ and $\beta = M, \Lambda'_1, \dots, \Lambda_{m+1}$ are two Lagrangian paths, the first ending at $M$ and the second starting at $M$, then their concatenation is the Lagrangian path  $\alpha \ast \beta : \Lambda_0, \dots,\Lambda_n,M, \Lambda'_1, \dots, \Lambda_{m+1}$. 
\end{definition}
%%%%%%%%%%%%%%%%%%%%%%%%%%%%%%%%%%%%%%%

An arbitrary Lagrangian path has an associated Sylvester matrix:
%%%%%%%%%%%%%%%%%%%%%%%%%%%%%%%%%%%%%
\begin{definition}  \label{def matSylvester}
	%%%%%%%%%%%%%%%%%%%%%%%%%%%%%%%%%%%%%
	Let $\alpha: (\Lambda_0,\Lambda_1, \dots, \Lambda_n,\Lambda_{n+1})$ be a Lagrangian path .  The  Sylvester matrix of  $\alpha$  is the matrix  $S(\alpha)$ of the symmetric bilinear map with support $\Lambda_1^\ast \oplus \cdots \oplus \Lambda_n^\ast$  whose block coefficients are given as follows:
	\[
	\left\{ 
	\begin{array}{c}
	a_{i,i} = d_{\Lambda_i}(\Lambda_{i-1},\Lambda_{i+1}) \textrm{ for } i=1, \dots, n. \\
	a_{i+1,i} = (\beta_{\Lambda_{i+1},\Lambda_i})^{-1}:\Lambda_i^\ast \rightarrow \Lambda_{i+1}  \textrm{ for } i=1, \dots, n-1. \\
	a_{i,i+1}= a_{i+1,i}^\ast = -(\beta_{\Lambda_i,\Lambda_{i+1}})^{-1}
	\end{array}
	\right.
	\]
and all other coefficients are zero.

	Writing $b_{\Lambda_{i+1},\Lambda_{i}} = (\beta_{\Lambda_{i+1},\Lambda_{i}})^{-1}$, the matrix $S(\alpha)$ is a trigonal matrix  of the following form
	
	\[
	\left(
	\begin{matrix}
	d_{\Lambda_1}(\Lambda_0,\Lambda_2) & b_{\Lambda_1,\Lambda_2}& 0 & \cdots & 0 \\
	-b_{\Lambda_2,\Lambda_1} & d_{\Lambda_2}(\Lambda_1,\Lambda_3) & \ddots & \ddots & \vdots  \\
	0 & \ddots &\ddots & \ddots & 0 & \\
	\vdots & \ddots & \ddots  &  \ddots & b_{\Lambda_{n-1},\Lambda_{n}} \\
	0 & \cdots  &0&-b_{\Lambda_{n},\Lambda_{n-1}} & d_{\Lambda_n}(\Lambda_{n-1},\Lambda_{n+1})
	\end{matrix}
	\right) 
	\]
	
\end{definition}
%%%%%%%%%%%%%%%%%%%%%%%%%%%%%%%%%%%%%%

The map that to a Lagrangian path associates its Sylvester matrix has an obvious equivariance property, which we formally state for future reference, the proof is trivial.

%%%%%%%%%%%%%%%%%%%%%%%%%%%%%%%%%%%%%%
\begin{lemma}\label{lem:sylvis equiv}
%%%%%%%%%%%%%%%%%%%%%%%%%%%%%%%%%%%%%%
Let $
\alpha : (\Lambda_0, \Lambda_1, \dots, \Lambda_n,\Lambda_{n+1})$
be a Lagrangian path and $\phi \in Sp_{2g}(k)$, then $
\phi_\ast(\alpha): (\phi (\Lambda_0), \phi(\Lambda_1), \dots, \phi(\Lambda_n),\phi(\Lambda_{n+1}))
$ 
is again a Lagrangian path, and the map $\phi$ induces a canonical ismetry between the associated Sylvester  $S(\alpha)$ and $S(\phi_\ast(\alpha))$.
\end{lemma}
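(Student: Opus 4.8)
The plan is to write the required isometry down explicitly: it is simply the map induced on the supports by the restrictions of $\phi$. I would first record the immediate fact that $\phi_\ast(\alpha)$ really is a Lagrangian path. Since $\phi$ preserves $\omega$ we have $(\phi\Lambda_i)^\bot = \phi(\Lambda_i^\bot) = \phi\Lambda_i$, so each $\phi\Lambda_i$ is a Lagrangian, and since $\phi$ is a linear isomorphism $\phi\Lambda_i + \phi\Lambda_{i+1} = \phi(\Lambda_i + \Lambda_{i+1}) = H(L)$, so consecutive terms stay transverse.

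For a Lagrangian $\Lambda$ write $\psi_\Lambda = \phi\vert_\Lambda : \Lambda \to \phi\Lambda$, a linear isomorphism, with dual isomorphism $\psi_\Lambda^\ast : (\phi\Lambda)^\ast \to \Lambda^\ast$. The candidate isometry is
\[
\Psi = \bigoplus_{i=1}^n \bigl(\psi_{\Lambda_i}^\ast\bigr)^{-1} : \Lambda_1^\ast \oplus \cdots \oplus \Lambda_n^\ast \longrightarrow (\phi\Lambda_1)^\ast \oplus \cdots \oplus (\phi\Lambda_n)^\ast,
\]
an isomorphism from the support of $S(\alpha)$ to the support of $S(\phi_\ast(\alpha))$, whose dual is $\Psi^\ast = \bigoplus_i \psi_{\Lambda_i}^{-1}$ under the canonical identifications $\Lambda_i^{\ast\ast} = \Lambda_i$. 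I would then check block by block that $\Psi^\ast \circ S(\phi_\ast(\alpha)) \circ \Psi = S(\alpha)$ as maps $\bigoplus_i \Lambda_i^\ast \to \bigoplus_i \Lambda_i$, which is exactly the assertion that $\Psi$ is an isometry.

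The only step that is not completely formal is the equivariance of the two building blocks of the Sylvester matrix. For the off-diagonal blocks it follows at once from Lemma~\ref{lem bildeuxlag}(5): rearranging $\beta_{\Lambda,M} = \psi_M^\ast\circ\beta_{\phi\Lambda,\phi M}\circ\psi_\Lambda$ — and inverting, which is legitimate on transverse pairs by Lemma~\ref{lem bildeuxlag}(4) — gives
\[
(\beta_{\phi\Lambda,\phi M})^{-1} = \psi_\Lambda\circ(\beta_{\Lambda,M})^{-1}\circ\psi_M^\ast.
\]
For the diagonal blocks I would feed this into the formula of Lemma~\ref{lem reldiffbeta}, namely $d_X(\Lambda,M) = -(\beta_{X,M})^{-1}\beta_{\Lambda,M}(\beta_{\Lambda,X})^{-1}$: substituting the three equivariances, the factors $\psi_M^\ast,(\psi_M^\ast)^{-1}$ and $\psi_\Lambda,\psi_\Lambda^{-1}$ cancel in pairs and one is left with
\[
d_{\phi X}(\phi\Lambda,\phi M) = \psi_X\circ d_X(\Lambda,M)\circ\psi_X^\ast.
\]
Taking $X=\Lambda_i$, $\Lambda=\Lambda_{i-1}$, $M=\Lambda_{i+1}$ this is what is needed for the diagonal entries $a_{i,i}$ of Definition~\ref{def matSylvester}, and the sub- and super-diagonal entries $a_{i+1,i}=(\beta_{\Lambda_{i+1},\Lambda_i})^{-1}$, $a_{i,i+1}=-(\beta_{\Lambda_i,\Lambda_{i+1}})^{-1}$ are handled directly by the displayed formula for $(\beta_{\phi\Lambda,\phi M})^{-1}$, the sign being carried along. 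Plugging these into $\Psi^\ast\circ S(\phi_\ast(\alpha))\circ\Psi$ and cancelling each $\psi_{\Lambda_i}$ against $\psi_{\Lambda_i}^{-1}$ and each $\psi_{\Lambda_i}^\ast$ against $(\psi_{\Lambda_i}^\ast)^{-1}$ recovers $S(\alpha)$ entry by entry, while the off-support blocks vanish on both sides.

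There is essentially no genuine obstacle here — the statement is, as claimed, routine — the only thing requiring care is the bookkeeping of the dualizations and of the directions of the arrows $\Psi$ and $\Psi^\ast$; once the conventions are fixed the verification is mechanical.
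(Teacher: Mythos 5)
Your proof is correct, and it fills in exactly the computation the paper declares trivial and omits: the isometry induced on supports by the restrictions of $\phi$, with the block-by-block verification reducing to Lemma~\ref{lem bildeuxlag}(5) for the off-diagonal entries and, via Lemma~\ref{lem reldiffbeta}, for the diagonal entries. The bookkeeping of duals and inverses is handled correctly, so there is nothing to object to.
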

%%%%%%%%%%%%%%%%%%%%%%%%%%%%%%%%%%%%%%
The following is a first result showing how much of the properties of a Lagrangian path is encoded in its Sylvester matrix. 

%%%%%%%%%%%%%%%%%%%%%%%%%%%%%%%%%%%%%%
\begin{proposition}\label{prop:transeqnondeg}
%%%%%%%%%%%%%%%%%%%%%%%%%%%%%%%%%%%%%%
	Let $\alpha: \Lambda_0, \cdots, \Lambda_{n+1}$ be a Lagrangian path and denote by  $S(\alpha)$ its Sylvester matrix. To reduce the amount of notation, we  write $\beta_{i,j}$ for $\beta_{\Lambda_i,\Lambda_j}$  and we consider the morphism
	\[
	E_0: \Lambda_0  \rightarrow  \Lambda_1^\ast \oplus \cdots \oplus \Lambda_{n}^\ast,
	\]
	whose matrix is the transpose of the row
	\[
	\left[
	\begin{matrix}
		\beta_{0,1} &
		\beta_{0,2} &
		\dots &
		\beta_{0,n}
	\end{matrix}
	\right]
	\]
	and
	\[
	T_n: \Lambda_n^\ast \rightarrow \Lambda_1 \oplus \cdots \oplus\Lambda_{n}
	\]
	whose matrix is the transpose of the row
	\[
	\left[
	\begin{matrix}
		0 &
		\dots &
		0 &
		-(\beta_{n,n+1})^{-1}
	\end{matrix}
	\right].
	\]
	Then
	\begin{enumerate}
		\item Both  morphisms $E_0$ and $T_n$ are injective.
		\item The following diagramm, viewed as a chain map between horizontal chain complexes, is a homotopy equivalence.
	\end{enumerate}
	\[
	\xymatrix{
		\Lambda_0 \ar[r]^-{\beta_{0,n+1}} \ar[d]^-{E_0} & \Lambda_{n+1}^\ast \ar[d]^{T_n} \\
		\Lambda_1^\ast \oplus \cdots \oplus \Lambda_{n}^\ast \ar[r]_{S(\alpha)} & \Lambda_1 \oplus \cdots \oplus \Lambda_{n}
	}
	\]
\end{proposition}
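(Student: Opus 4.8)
The plan is to verify both claims by explicit block computation, using Proposition~\ref{prop relfond} as the main algebraic input. For the injectivity of $E_0$, I would note that its first component is $\beta_{0,1} = \beta_{\Lambda_0,\Lambda_1}$, which is invertible since $\Lambda_0 \pitchfork \Lambda_1$; hence $E_0$ is split injective, with a retraction given by $\beta_{0,1}^{-1}$ on the first factor. Symmetrically, $T_n$ has last component $-\beta_{n,n+1}^{-1}$, invertible since $\Lambda_n \pitchfork \Lambda_{n+1}$, so $T_n$ is split injective as well. This disposes of item~(1) with essentially no work beyond Lemma~\ref{lem bildeuxlag}(4).

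For item~(2), the substantive point is that the diagram commutes as a chain map, i.e. that $S(\alpha) \circ E_0 = T_n \circ \beta_{0,n+1}$ as maps $\Lambda_0 \to \Lambda_1 \oplus \cdots \oplus \Lambda_n$. I would check this row by row. The $i$-th row of $S(\alpha)\circ E_0$ for $1 \le i \le n$ reads, using the tridiagonal shape from Definition~\ref{def matSylvester},
\[
-(\beta_{\Lambda_i,\Lambda_{i-1}})^{-1}\beta_{0,i-1} + d_{\Lambda_i}(\Lambda_{i-1},\Lambda_{i+1})\,\beta_{0,i} + (\beta_{\Lambda_i,\Lambda_{i+1}})^{-1}\beta_{0,i+1},
\]
with the obvious truncations at the two ends (where the conventions $\beta_{0,0}$-term is absent for $i=1$ and the $\beta_{0,n+1}$-term survives for $i=n$). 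Applying Proposition~\ref{prop relfond} with $\Lambda_1,\Lambda_2,\Lambda_3$ there replaced by $\Lambda_{i-1},\Lambda_i,\Lambda_{i+1}$ and $X = \Lambda_0$ shows this expression vanishes for $1 \le i \le n-1$; for $i = n$ the same relation, with $\Lambda_3$ there equal to $\Lambda_{n+1}$, shows the $n$-th row equals $-(\beta_{\Lambda_n,\Lambda_{n+1}})^{-1}\beta_{0,n+1}$, which is precisely the $n$-th component of $T_n\circ\beta_{0,n+1}$ (all earlier components of the right-hand side being zero). For the degenerate cases $i=1$ (and $n=1$) one uses Lemma~\ref{lem reldiffbeta} directly to absorb the missing boundary term. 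This establishes commutativity.

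It then remains to promote the commuting square to a homotopy equivalence. Since the vertical maps are split injective, it suffices to exhibit a deformation retraction of the bottom complex onto (the image of) the top complex compatible with the differentials; equivalently, to produce chain maps $r\colon S(\alpha) \Rightarrow \beta_{0,n+1}$ in the other direction together with the homotopies. Here I would use the retractions $\rho_0 = \beta_{0,1}^{-1}\,\mathrm{pr}_1$ of $E_0$ and $\rho_n = -\beta_{n,n+1}\,\mathrm{pr}_n$ of $T_n$ (up to sign bookkeeping): the pair $(\rho_0,\rho_n)$ is again a chain map by the transpose of the computation above, and $\mathrm{id} - E_0\rho_0$, $\mathrm{id} - T_n\rho_n$ are nullhomotopic by an explicit Gaussian-elimination homotopy on the tridiagonal form of $S(\alpha)$ — one clears the off-diagonal $b_{\Lambda_1,\Lambda_2}$, then $b_{\Lambda_2,\Lambda_3}$, and so on, each step being legitimate because the relevant $b$-block is invertible.

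The main obstacle I anticipate is purely bookkeeping: keeping the signs and the end-of-row truncations straight when invoking Proposition~\ref{prop relfond}, and writing down the contracting homotopy on the tridiagonal matrix in closed enough form to see it is well-defined (each elimination step depends only on the already-established invertibility of the $b_{\Lambda_i,\Lambda_{i+1}}$, so no circularity arises, but the indexing is fiddly). The conceptual content — commutativity is exactly the fundamental relation, and the square is an equivalence because both columns split off an acyclic tridiagonal remainder — is straightforward; the care is all in the linear algebra.
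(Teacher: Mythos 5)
Your treatment of items (1) and (2) is correct and coincides with the paper's: injectivity comes from the invertibility of the extreme components $\beta_{0,1}$ and $\beta_{n,n+1}^{-1}$, and commutativity of the square is exactly the row-by-row application of Proposition~\ref{prop relfond}, with Lemma~\ref{lem reldiffbeta} handling the first row. So far, so good.

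The gap is in your promotion of the commuting square to a homotopy equivalence. The pair $\bigl(\rho_0,\rho_n\bigr)=\bigl(\beta_{0,1}^{-1}\,\mathrm{pr}_1,\; -\beta_{n,n+1}\,\mathrm{pr}_n\bigr)$ is \emph{not} a chain map from the bottom complex to the top one, and no sign bookkeeping will fix this. Indeed, the chain-map condition would require $\beta_{0,n+1}\,\beta_{0,1}^{-1}\,\mathrm{pr}_1 = -\beta_{n,n+1}\,\mathrm{pr}_n\, S(\alpha)$; the left side is supported on the first summand $\Lambda_1^\ast$ only, while the right side is $-\beta_{n,n+1}$ applied to the last row of $S(\alpha)$, which is supported on $\Lambda_{n-1}^\ast\oplus\Lambda_n^\ast$ and contains the generally nonzero term $-\beta_{n,n+1}\, d_{\Lambda_n}(\Lambda_{n-1},\Lambda_{n+1})$. (Already for $n=2$ the two sides disagree on $\Lambda_2^\ast$.) Consequently the endomorphisms $\mathrm{id}-E_0\rho_0$, $\mathrm{id}-T_n\rho_n$ do not even assemble into a chain endomorphism, and the question of their nullhomotopy is not well posed. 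The Gaussian-elimination idea can be made to work, but the resulting retraction accumulates contributions from all the off-diagonal blocks and is not the naive one-factor projection; since you never write it down, the claimed homotopy equivalence is not established.

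The paper avoids constructing a homotopy inverse altogether. It exhibits explicit cokernels $q=\operatorname{coker}E_0$ (a matrix with first column $-\beta_{0,j}\beta_{0,1}^{-1}$ and an identity block) and $p=\operatorname{coker}T_n$ (projection onto the first $n-1$ coordinates), checks that the induced map $s$ on cokernels is the matrix obtained from $S(\alpha)$ by deleting its first column and last row, and observes that $s$ is lower triangular with the invertible blocks $\beta_{i,i+1}^{-1}$ on its diagonal, hence invertible. The snake lemma then shows $(E_0,T_n)$ is a quasi-isomorphism, and since these are complexes of $k$-vector spaces, a quasi-isomorphism is automatically a homotopy equivalence. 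If you want to keep your two-sided strategy, you must either produce the genuine retraction delivered by the elimination process, or switch to this kernel/cokernel argument.
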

%%%%%%%%%%%%%%%%%%%%%%%%%%%%%%%%%%%%%%%
\begin{proof} $i)$ Since $\Lambda_0 \pitchfork \Lambda_1$, the morphism $\beta_{0,1}$ is an isomorphism, and as it is the first component of the matrix of  $E_0$ this is an injective morphism. The morphism $T_0$ is also injective: its only non-zero component is an isomorphism because the underlying Lagrangians that determine it are transverse.
	We now check the commutativity of the diagram, and for this we first compute $S(\alpha) E_0$. Again to reduce the amount of notation, we write  $d_{j}(j-1,j+1)$ instead of $d_{\Lambda_j}(\Lambda_{j-1},\Lambda_{j+1})$. 
	
	The first coefficient of the column matrix $S(\alpha)E_0$ is 
	\[
	d_1(0,2) \beta_{0,1}  +\beta_{1,2}^{-1}\beta_{0,2}.
	\]
	As  $\Lambda_0 \pitchfork \Lambda_1$, we know that  $\beta_{0,1}$ is invertible, and Lemma~\ref{lem reldiffbeta} shows that the coefficient is in fact zero.
	
	For $2 \leq i \leq n-1$, the $i$-th coefficient is
	\[
	-\beta_{i,i-1}^{-1} \beta_{0,i-1} + d_{i}(i-1,i+1)\beta_{0,i} + \beta_{i,i+1}^{-1}\beta_{0,i+1} 
	\]
	which is zero according to Proposition~\ref{prop relfond}.
	
	Finally, the last coefficient is equal to
	\[
	-\beta_{n,n-1}\beta_{0,n-1} + d_n(n-1,n+1)\beta_{0,n}
	\]
	which, by Proposition~\ref{prop relfond}, is equal to
	\[
	-\beta_{n,n+1}^{-1}\beta_{0,n+1}.
	\]
	And this is exactly the unique non-zero coefficient of the column matrix $T_n\beta_{0,n+1}$.
	
	$ii)$ Consider now the projection over the first $n-1$ coordinates:
	\[
	p: \Lambda_1 \oplus \cdots \oplus \Lambda_n \rightarrow  \Lambda_1 \oplus \cdots \oplus \Lambda_{n-1} 
	\]
	and
	\[
	q: \Lambda_1^\ast \oplus \cdots \oplus \Lambda_n^\ast  \rightarrow  \Lambda_2^\ast \oplus \cdots \oplus \Lambda_{n}^\ast
	\]
	which is given by the matrix
	\[
	\left[
	\begin{matrix}
		-\beta_{0,2}\beta_{0,1}^{-1} & 1 & 0  & \dots & 0 \\
		-\beta_{0,3}\beta_{0,1}^{-1} & 0 & 1 & \ddots  & \vdots \\
		\vdots & \vdots & \ddots & \ddots &0 \\
		-\beta_{0,n}\beta_{0,1}^{-1} & 0 & \dots  & 0 & 1 \\
	\end{matrix}
	\right]
	\]
	The identity block on the right hand side of the matrix shows that $q$ is surjective and a direct computation shows that $qE_0 = 0$, hence by dimensional reasons  $q = \operatorname{coker} E_0$. Furthermore,  trivially, $p = \operatorname{coker} T_n$.
	
	Let finally $s:  \Lambda_2^\ast \oplus \cdots \oplus \Lambda_n^\ast  \rightarrow  \Lambda_1 \oplus \cdots \oplus \Lambda_{n-1}$ be the matrix obtained by erasing the first row and the first column in $S(\alpha)$. We have then a commutative diagram with exact rows
	\[
	\xymatrix{
		0 \ar[r] & \Lambda_0 \ar[d] \ar[r]^-{E_0} & \Lambda_1^\ast \oplus \cdots \oplus \Lambda_n^\ast \ar[d]^{S(\alpha)} \ar[r]^-q &  \Lambda_2^\ast \oplus \cdots \oplus \Lambda_n^\ast \ar[r] \ar[d]^s & 0  \\
		0 \ar[r] & \Lambda_{n+1}^\ast \ar[r]^-{T_n} &   \Lambda_1 \oplus \cdots \oplus \Lambda_{n} \ar[r]^p &   \Lambda_1 \oplus \cdots \oplus \Lambda_{n-1} \ar[r] & 0
	}
	\]

	The snake lemma shows then that the morphism of chain complexes $(E_0,T_n)$ is a quasi-isomorphism if and only if  $s$ is an isomorphism. Since the complexes we consider are complexes of $k$-vector spaces,
	any quasi-isomorphism is in fact a homotopy equivalence.
%	 Since the complexes we consider are complexes of projective $R$-modules, any quasi-isomorphism is in fact a homotopy equivalence.
	
	We are left with showing that  $s$  is invertible. The identity block  on the righ-hand side of the matrix  $q $ imposes, by direct computation, that $s$  has to be the matrix that we get from  $S(\alpha)$ by erasing its first column and last row
	\[
	s=
	\left(
	\begin{matrix}
		\beta_{1,2}^{-1}& 0 & \cdots& \cdots & 0 \\
		d_{2}(1,3) & \beta_{2,3}^{-1} & 0 & \cdots &\vdots \\
		-\beta_{3,2}^{-1} & d_{3}(2,4) & \beta_{3,4}^{-1} & \ddots &  \vdots\\
		0 &
		\ddots & \ddots&  \ddots & 0 &  \\
		\cdots & 0&-\beta_{{n-1},{n-2}}^{-1}  &  d_{{n-1}}({n-2},{n}) & \beta_{{n-1},{n}}^{-1} 
	\end{matrix}
	\right).
	\]
	
	Let us show now that $pS(\alpha) = sq$. The matrix for $pS(\alpha)$ is the matrix for $S(\alpha)$ in which we have erased the last row. Hence the unique non-trivial part in the equality $pS(\alpha) = sq$ is the equality between the last two columns of these matrices. We will distinguish between three cases: the first coefficient, the second coefficient and finally the coefficient of index $j,1$ for $j>1$. 
	\begin{enumerate}
		\item[i) ] For the first coefficient, we have
		\[
		d_{1}(0,2) =   -\beta_{1,2}^{-1}\beta_{0,2}\beta_{0,1}^{-1}  \textrm{ by Lemma~\ref{lem reldiffbeta}.}
		\]
		\item[ii)] For the second coefficient, we have
		\[
				-d_2(1,3) \beta_{0,3}\beta_{0,1}^{-1} - \beta_{2,3}^{-1}\beta_{0,3}\beta_{0,1}^{-1}  =   -\beta_{2,1}^{-1} \textrm{ by Proposition~\ref{prop relfond}.}
		\]
		\item[iii)] Finally, when $j>1$,
		\[
		\beta_{j,j-1}\beta_{0,j}\beta_{0,1}^{-1} - d_j(j-1,j+1)\beta_{0,j}\beta_{0,1}^{-1}  - \beta_{j,j+1}^{-1}\beta_{0,j+1}\beta_{0,1}^{-1} = 0  \textrm{ by Proposition~\ref{prop relfond}.}
		\]
	\end{enumerate}

	To conclude observe that as the matrix $s$ is lower triangular with invertible elements along the diagonal it is invertible. 
\end{proof}

Folowwing the same proof nut exchanging the roles of $\Lambda_0$ and $\Lambda_n$ one can show:

%%%%%%%%%%%%%%%%%%%%%%%%%%%%%%%%%%%%%%
\begin{proposition}\label{prop:transeqnondegalt}
	%%%%%%%%%%%%%%%%%%%%%%%%%%%%%%%%%%%%%%
	Let $\alpha: \Lambda_0, \cdots, \Lambda_{n+1}$ be a Lagrangian path  $S(\alpha)$ its Sylvester matrix. Write   $\beta_{i,j}$ for $\beta_{\Lambda_i,\Lambda_j}$  and consider the maps
	\[
	F_{n+1}: \Lambda_{n+1}  \rightarrow  \Lambda_1^\ast \oplus \cdots \oplus \Lambda_{n}^\ast
	\]
	with matrix the transposed of the row:
	\[
	\left[
	\begin{matrix}
		\beta_{n+1,1} &
		\beta_{n+1,2} &
		\dots &
		\beta_{n+1,n}
	\end{matrix}
	\right]
	\]
	and
	\[
	U_n: \Lambda_n^\ast \rightarrow \Lambda_1 \oplus \cdots \oplus\Lambda_{n}
	\]
	with matrix the transpose of the row:
	\[
	\left[
	\begin{matrix}
		\beta_{1,0}^{-1} & 
		0 &
		\dots &
		0 
	\end{matrix}
	\right]
	\]
	Then 
	\begin{enumerate}
		\item The maps $F_{n+1}$ and $U_n$ are injective.
		\item The following diagram, viewed as chain maps between horizontal chain complexes is a homotopy equivalence.
	\end{enumerate}
	\[
	\xymatrix{
		\Lambda_{n+1} \ar[r]^-{\beta_{n+1,0}} \ar[d]^-{F_{n+1}} & \Lambda_{0}^\ast \ar[d]^{U_n} \\
		\Lambda_1^\ast \oplus \cdots \oplus \Lambda_{n}^\ast \ar[r]_{S(\alpha)} & \Lambda_1 \oplus \cdots \oplus \Lambda_{n}
	}
	\]
\end{proposition}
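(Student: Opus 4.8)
The statement is the mirror image of Proposition~\ref{prop:transeqnondeg}, obtained by reading the Lagrangian path backwards, and the cleanest route is to reduce it to that proposition rather than to rewrite the whole argument. Let $\bar{\alpha}\colon\bar{\Lambda}_0,\dots,\bar{\Lambda}_{n+1}$ with $\bar{\Lambda}_j:=\Lambda_{n+1-j}$ be the reversed path; it is again a Lagrangian path because transversality is symmetric. The first step is to compare $S(\bar{\alpha})$ with $S(\alpha)$ block by block. Using the antisymmetry of the affine difference, $d_X(A,B)=-d_X(B,A)$, together with $\beta_{A,B}=-\beta_{B,A}^{\ast}$ from Lemma~\ref{lem bildeuxlag}, one finds that $S(\bar{\alpha})$ is obtained from $-S(\alpha)$ by the antidiagonal permutation $P$ of the summands $\Lambda_1^{\ast},\dots,\Lambda_n^{\ast}$ (respectively $\Lambda_1,\dots,\Lambda_n$); that is, $S(\bar{\alpha})=P\,(-S(\alpha))\,P^{-1}$, and $P$ is an isometry of orthogonal sums. (On the diagonal the sign comes from swapping the two arguments of $d$, on the off-diagonal from exchanging $b_{\Lambda_i,\Lambda_{i+1}}$ with $-b_{\Lambda_{i+1},\Lambda_i}$.)

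Next I would apply Proposition~\ref{prop:transeqnondeg} to $\bar{\alpha}$. This produces injective maps $\bar{E}_0\colon\bar{\Lambda}_0=\Lambda_{n+1}\to\bigoplus_{j=1}^n\bar{\Lambda}_j^{\ast}$ and $\bar{T}_n\colon\bar{\Lambda}_n^{\ast}=\Lambda_1^{\ast}\to\bigoplus_{j=1}^n\bar{\Lambda}_j$, with the explicit row matrices given there, and the assertion that $(\bar{E}_0,\bar{T}_n)$ is a homotopy equivalence from $[\,\Lambda_{n+1}\xrightarrow{\beta_{n+1,0}}\Lambda_0^{\ast}\,]$ to $[\,\bigoplus_j\bar{\Lambda}_j^{\ast}\xrightarrow{S(\bar{\alpha})}\bigoplus_j\bar{\Lambda}_j\,]$. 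Conjugating by $P$ restores the standard order of the summands and replaces $S(\bar{\alpha})$ by $-S(\alpha)$; rearranging the components of the two column matrices then identifies $P\bar{E}_0$ with $F_{n+1}$ and $P\bar{T}_n$ with $-U_n$ (the residual sign in the second one is the only discrepancy with the statement). Composing with the sign isomorphism of complexes $(\id,-\id)\colon[\,\cdot\xrightarrow{-S(\alpha)}\cdot\,]\to[\,\cdot\xrightarrow{S(\alpha)}\cdot\,]$ absorbs that sign and yields the homotopy equivalence $(F_{n+1},U_n)$ from $[\,\Lambda_{n+1}\xrightarrow{\beta_{n+1,0}}\Lambda_0^{\ast}\,]$ to $[\,\bigoplus_j\Lambda_j^{\ast}\xrightarrow{S(\alpha)}\bigoplus_j\Lambda_j\,]$, since a composite of a homotopy equivalence with isomorphisms of complexes is again a homotopy equivalence. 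Injectivity of $F_{n+1}$ and $U_n$ is inherited from that of $\bar{E}_0,\bar{T}_n$; equivalently, the component $\beta_{n+1,n}$ of $F_{n+1}$ and the component $\beta_{1,0}^{-1}$ of $U_n$ are isomorphisms because $\Lambda_{n+1}\pitchfork\Lambda_n$ and $\Lambda_1\pitchfork\Lambda_0$.

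An equivalent route, closer to the phrasing ``following the same proof'', is to rerun the argument of Proposition~\ref{prop:transeqnondeg} verbatim but building the homotopy equivalence outwards from the endpoint $\Lambda_{n+1}$ instead of $\Lambda_0$: the commutativity of the square again reduces to Lemma~\ref{lem reldiffbeta} for the extreme coefficient near $\Lambda_n$ and to Proposition~\ref{prop relfond} (with $X=\Lambda_{n+1}$) for the middle ones, the remaining extreme coefficient producing the unique nonzero entry of $U_n\beta_{n+1,0}$, and the cokernel computation uses the analogous projections, reducing invertibility to a triangular matrix with invertible diagonal.

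I expect the only real obstacle to be bookkeeping: tracking the antidiagonal reindexing and the sign $-S(\alpha)$ versus $S(\alpha)$, and matching the normalized vertical maps against the specific row matrices prescribed for $F_{n+1}$ and $U_n$. No new geometric input is needed beyond Lemma~\ref{lem bildeuxlag}, Lemma~\ref{lem reldiffbeta}, Proposition~\ref{prop relfond}, and Proposition~\ref{prop:transeqnondeg} itself.
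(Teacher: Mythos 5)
Your proposal is correct. Note that the paper gives no actual proof of this statement: it only remarks that one obtains it by ``following the same proof but exchanging the roles of $\Lambda_0$ and $\Lambda_n$,'' which is precisely the second route you sketch (rerunning the argument of Proposition~\ref{prop:transeqnondeg} from the endpoint $\Lambda_{n+1}$, with Lemma~\ref{lem reldiffbeta} handling the extreme coefficient and Proposition~\ref{prop relfond} with $X=\Lambda_{n+1}$ the middle ones). Your primary route --- reversing the path, checking $S(\bar\alpha)=P(-S(\alpha))P^{-1}$ blockwise via $d_X(A,B)=-d_X(B,A)$ and $\beta_{A,B}=-\beta_{B,A}^{\ast}$, and transporting the homotopy equivalence of Proposition~\ref{prop:transeqnondeg} through the reordering and sign isomorphisms --- is a genuinely different and arguably more satisfying argument, since it turns the paper's ``by symmetry'' assertion into a formal reduction with no new computation. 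Your sign bookkeeping is consistent: the minus sign in $P\bar T_n=-U_n$ and the minus sign in $S(\bar\alpha)=P(-S(\alpha))P^{-1}$ are exactly compensated by the isomorphism $(\mathrm{id},-\mathrm{id})$ of complexes, so the square $S(\alpha)F_{n+1}=U_n\beta_{n+1,0}$ commutes on the nose. The only cosmetic caveat is that the domains of $T_n$ and $U_n$ as printed in the paper ($\Lambda_n^\ast$) are typos for $\Lambda_{n+1}^\ast$ and $\Lambda_0^\ast$ respectively, as the diagrams and the component maps make clear; your reading matches the intended statement.
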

%%%%%%%%%%%%%%%%%%%%%%%%%%%%%%%%%%%%%%

The following result is the translation in the context of general Lagrangian paths of  Proposition-Definition A.2.1 in~\cite{MR2431916}.
%%%%%%%%%%%%%%%%%%%%%%%%%%%%%%%%%%%%%%
\begin{corollary}\label{cor:sylvdettrans}
%%%%%%%%%%%%%%%%%%%%%%%%%%%%%%%%%%%%%%
Let $\alpha: (\Lambda_0, \Lambda_1,\dots,\Lambda_n,\Lambda_{n+1})$ be a Lagrangian path and let  $S(\alpha)$ denote its Sylvester matrix. The following are equivalent:
	\begin{enumerate}
		\item The bilinear form $S(\alpha)$ is non-degenerate.
		\item The two Lagrangians $\Lambda_0$ and $\Lambda_{n+1}$ are transverse.
	\end{enumerate}
\end{corollary}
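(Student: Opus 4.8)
The plan is to deduce the corollary directly from Proposition~\ref{prop:transeqnondeg}, reading off homology from the homotopy equivalence it provides. First I would dispose of the trivial case $n=0$: then the path is just $\Lambda_0,\Lambda_1$, the support $\Lambda_1^\ast\oplus\cdots\oplus\Lambda_n^\ast$ of $S(\alpha)$ is the zero space, so $S(\alpha)$ is vacuously non-degenerate, while $\Lambda_0\pitchfork\Lambda_1$ holds by the very definition of a Lagrangian path; both conditions hold and there is nothing to prove. So assume $n\geq 1$.

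Next I would record two elementary dimension counts. The linear map $S(\alpha)\colon \Lambda_1^\ast\oplus\cdots\oplus\Lambda_n^\ast\to\Lambda_1\oplus\cdots\oplus\Lambda_n$ has source and target of the same dimension $ng$, and under the canonical identification $(\Lambda_1^\ast\oplus\cdots\oplus\Lambda_n^\ast)^\ast\simeq\Lambda_1\oplus\cdots\oplus\Lambda_n$ this map is precisely the adjoint of the bilinear form $S(\alpha)$; hence $S(\alpha)$ is non-degenerate if and only if this map is an isomorphism, equivalently $\ker S(\alpha)=0$ and $\coker S(\alpha)=0$. Likewise $\beta_{0,n+1}\colon\Lambda_0\to\Lambda_{n+1}^\ast$ has source and target of dimension $g$, so it is an isomorphism if and only if $\ker\beta_{0,n+1}=0$ and $\coker\beta_{0,n+1}=0$; and by Lemma~\ref{lem bildeuxlag}(4) it is an isomorphism exactly when $\Lambda_0\pitchfork\Lambda_{n+1}$.

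Now I would invoke Proposition~\ref{prop:transeqnondeg}: the pair $(E_0,T_n)$ is a homotopy equivalence between the two-term horizontal complexes
\[
\bigl[\,\Lambda_0\xrightarrow{\ \beta_{0,n+1}\ }\Lambda_{n+1}^\ast\,\bigr]
\quad\text{and}\quad
\bigl[\,\Lambda_1^\ast\oplus\cdots\oplus\Lambda_n^\ast\xrightarrow{\ S(\alpha)\ }\Lambda_1\oplus\cdots\oplus\Lambda_n\,\bigr].
\]
A homotopy equivalence of complexes is in particular a quasi-isomorphism, so it induces isomorphisms on homology in both degrees; concretely $\ker\beta_{0,n+1}\simeq\ker S(\alpha)$ and $\coker\beta_{0,n+1}\simeq\coker S(\alpha)$. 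Combining this with the previous paragraph: $S(\alpha)$ is non-degenerate $\iff$ $\ker S(\alpha)=\coker S(\alpha)=0$ $\iff$ $\ker\beta_{0,n+1}=\coker\beta_{0,n+1}=0$ $\iff$ $\beta_{0,n+1}$ is an isomorphism $\iff$ $\Lambda_0\pitchfork\Lambda_{n+1}$, which is the claimed equivalence.

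There is essentially no obstacle here: all the real content was already extracted in Proposition~\ref{prop:transeqnondeg}, and what remains is the standard observation that a quasi-isomorphism of two-term complexes matches up kernels and cokernels of the differentials, together with bookkeeping of dimensions. The only point requiring a word of care is the identification of the bilinear form $S(\alpha)$ with the displayed linear map (so that "non-degenerate'' becomes "isomorphism''), which is immediate from Definition~\ref{def matSylvester} since $S(\alpha)$ is symmetric; alternatively one could cite Proposition~\ref{prop:transeqnondegalt} for the symmetric counterpart and thereby avoid even that remark.
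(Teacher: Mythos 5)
Your proof is correct and follows the same route as the paper: both deduce the statement from Proposition~\ref{prop:transeqnondeg} (the homotopy equivalence forces $\beta_{0,n+1}$ and $S(\alpha)$ to be invertible simultaneously) together with Lemma~\ref{lem bildeuxlag}(4). Your version merely spells out the quasi-isomorphism bookkeeping and the $n=0$ case, which the paper leaves implicit.
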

%%%%%%%%%%%%%%%%%%%%%%%%%%%%%%%%%%%%%%
\begin{proof} By Lemma~\ref{lem bildeuxlag} (4), the map $\beta_{0,n+1}$ is invertible if and only if $L_0 \pitchfork L_{n+1}$, and in the context of Proposition~\ref{prop:transeqnondeg} this happens if and only if $S(\alpha)$ is invertible too.	
\end{proof}

The next is the key result to simplify Sylvester matrices of long Lagrangian paths.

%%%%%%%%%%%%%%%%%%%%%%%%%%%%%%%%%%%%%%%
\begin{lemma}["Shorcut Lemma"]\label{lem:raccourci}
	%%%%%%%%%%%%%%%%%%%%%%%%%%%%%%%%%%%%%%%
	Let $\Lambda_0, \cdots, \Lambda_{n+1}$ be a Lagrangian path with Sylvester matrix $S$. We suppose that there are two  indices $0 \leq i < j \leq n+1$ such that $\Lambda_i \pitchfork \Lambda_j$.  We then have two more Lagrangian paths: 
	\begin{enumerate}
		\item[a)] The sub-path $\Lambda_i \cdots, \Lambda_j$, with Sylvester matrix $S(\Lambda_i,\dots,\Lambda_j)$ and
		\item[b)] The shortened path $\Lambda_0, \cdots, \Lambda_i,\Lambda_j, \cdots \Lambda_{n+1}$, with sylvester matrix
		
		$S(\Lambda_0,\dots,\Lambda_i,\Lambda_j,\dots,\Lambda_{n+1})$.
	\end{enumerate}
	Then
	\[
	S \textrm{ is isometric to }S(\Lambda_i,\dots,\Lambda_j) \oplus S(\Lambda_0,\dots,\Lambda_i,\Lambda_j,\dots,\Lambda_{n+1}).
	\]
\end{lemma}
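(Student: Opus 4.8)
The plan is to reduce the general statement to the two extreme cases $i=0$ and $j=n+1$ separately, and then to handle those via the homotopy-equivalence diagrams of Propositions~\ref{prop:transeqnondeg} and~\ref{prop:transeqnondegalt}. First I would observe that it suffices to treat the case where the cut occurs at one end, say $j=n+1$; indeed, applying that special case first to the pair $(i,n+1)$ splits off $S(\Lambda_i,\dots,\Lambda_{n+1})$ as an orthogonal summand of $S$, leaving $S(\Lambda_0,\dots,\Lambda_i,\Lambda_{n+1})$ — wait, that is not quite the shortened path we want. So more carefully: I would first apply the $i=0$ end-case to the pair $(0,j)$ inside the subpath $\Lambda_0,\dots,\Lambda_j$ viewed abstractly, obtaining $S(\Lambda_0,\dots,\Lambda_j)$ isometric to $S(\Lambda_0,\dots,\Lambda_j$ with the middle cut$)$; then splice. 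The cleanest route is: prove the lemma when $i=0$ (cut at the left end), prove it when $j=n+1$ (cut at the right end), and then for general $0\le i<j\le n+1$ write $S$ block-diagonally by first splitting along index $i$ and then along index $j$, using associativity of $\bot$ to recombine the two outer blocks into the Sylvester matrix of the shortened path. The structural reason a single index $i$ splits $S$ is that in the trigonal matrix of Definition~\ref{def matSylvester} the $(i,i)$ diagonal block $d_{\Lambda_i}(\Lambda_{i-1},\Lambda_{i+1})$ couples the $i$-th coordinate to both neighbours simultaneously, so splitting is not just deleting a row and column — one must perform a change of basis.

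The heart of the argument is the end-case, say the cut at $j=n+1$ with $\Lambda_i\pitchfork\Lambda_{n+1}$. Here the subpath is $\Lambda_i,\dots,\Lambda_{n+1}$ and the shortened path is $\Lambda_0,\dots,\Lambda_i,\Lambda_{n+1}$. I would exhibit an explicit congruence $S = P^\ast\, \big(S(\Lambda_0,\dots,\Lambda_i,\Lambda_{n+1})\bot S(\Lambda_i,\dots,\Lambda_{n+1})\big)\, P$ with $P$ invertible, where $P$ is built out of the maps $E_0$, $T_n$, $F_{n+1}$, $U_n$ from Propositions~\ref{prop:transeqnondeg} and~\ref{prop:transeqnondegalt} applied to the two sub-paths. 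Concretely: the support $\Lambda_1^\ast\oplus\cdots\oplus\Lambda_n^\ast$ of $S$ splits as $(\Lambda_1^\ast\oplus\cdots\oplus\Lambda_{i}^\ast)\oplus(\Lambda_{i+1}^\ast\oplus\cdots\oplus\Lambda_n^\ast)$; on the first factor $S$ already looks like $S(\Lambda_0,\dots,\Lambda_i,\Lambda_{n+1})$ except that the $(i,i)$ block is $d_{\Lambda_i}(\Lambda_{i-1},\Lambda_{i+1})$ instead of $d_{\Lambda_i}(\Lambda_{i-1},\Lambda_{n+1})$, and on the second it already looks like $S(\Lambda_i,\dots,\Lambda_{n+1})$ except for the analogous correction at its first block and for the off-diagonal coupling $a_{i,i+1}=-(\beta_{\Lambda_i,\Lambda_{i+1}})^{-1}$, $a_{i+1,i}=(\beta_{\Lambda_{i+1},\Lambda_i})^{-1}$ joining the two blocks. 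The substitution $d_{\Lambda_i}(\Lambda_{i-1},\Lambda_{i+1}) = d_{\Lambda_i}(\Lambda_{i-1},\Lambda_{n+1}) + d_{\Lambda_i}(\Lambda_{n+1},\Lambda_{i+1})$ — which is simply additivity of the affine difference $d_{\Lambda_i}$ on $\mathcal{L}_{\pitchfork\Lambda_i}$ — separates the $(i,i)$ entry into a piece belonging to each block, and the remaining job is to kill the coupling $a_{i,i+1}, a_{i+1,i}$ by an elementary column/row operation, i.e. adding to the $(i+1,\dots,n)$-block of coordinates a suitable multiple (through $\beta$'s) of the $i$-th coordinate. The precise change of variables is read off from the homotopy equivalences: the map $U_n$ for the subpath $\Lambda_i,\dots,\Lambda_{n+1}$ and the map $T_n$ for the subpath $\Lambda_0,\dots,\Lambda_i,\Lambda_{n+1}$ tell exactly which combination of the coordinates must be absorbed.

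I expect the main obstacle to be precisely the bookkeeping of this change of basis: writing down the invertible $P$ in block form and verifying, using the fundamental relation (Proposition~\ref{prop relfond}) and Lemma~\ref{lem reldiffbeta}, that $P^\ast (\,\cdot\,) P$ produces the block-diagonal form with no residual cross terms and with the two diagonal blocks equal on the nose to the two shorter Sylvester matrices. All the identities needed are the ones already assembled in Section~\ref{subsec:comlagr} — additivity of $d$, the relation $d_X(\Lambda,M)=-\beta_{X,M}^{-1}\beta_{\Lambda,M}\beta_{\Lambda,X}^{-1}$, and Proposition~\ref{prop relfond} — so the work is genuinely a (lengthy) verification rather than a search for a new idea. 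Finally, by Lemma~\ref{lem:sylvis equiv} we may, if convenient, normalize by a symplectic transformation so that $\Lambda_i = L$ and $\Lambda_{i+1}=L^\ast$ at the splitting point, which turns the abstract $\beta$'s into the identity and makes the elementary operation transparent; one then transports the resulting isometry back by equivariance.
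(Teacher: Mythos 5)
Your plan is, at its core, the paper's proof: both arguments split off $S(\Lambda_i,\dots,\Lambda_j)$ as an orthogonal summand of $S$ using that this block is non-degenerate precisely because $\Lambda_i\pitchfork\Lambda_j$ (Corollary~\ref{cor:sylvdettrans}), realize the complementary summand as the graph of a correction map read off from the diagrams of Propositions~\ref{prop:transeqnondeg} and~\ref{prop:transeqnondegalt}, and identify the restriction of $S$ to that complement with the Sylvester matrix of the shortened path via Lemma~\ref{lem reldiffbeta}, Proposition~\ref{prop relfond} and the Chasles relation for $d_{\Lambda_i}$. (A small correction: the change of basis is carried by the left-hand verticals $E_0\beta_{0,j}^{-1}$ and $F_{n+1}\beta_{n+1,i}^{-1}$ of those diagrams, not by $T_n$ and $U_n$.)

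The one step I would not trust is the proposed reduction of the interior case $0<i<j<n+1$ to the two end cases. Applying an end case to the sub-path $\Lambda_i,\dots,\Lambda_{n+1}$ (or to $\Lambda_0,\dots,\Lambda_j$) only decomposes the Sylvester matrix \emph{of that sub-path}, which is a restriction of $S$ but not an orthogonal summand of it; there is no transversality hypothesis attached to the single index $i$ (resp.\ $j$) that would let you ``split $S$ along index $i$'' first, so the two end-case isometries live on different forms and cannot be spliced by associativity of $\bot$. The paper does not attempt such a reduction: it runs the same completion-of-the-square computation directly in the interior case, where the excised block $\Lambda_{i+1}^\ast\oplus\cdots\oplus\Lambda_{j-1}^\ast$ couples to neighbours on both sides, the orthogonal complement contains embedded copies of both $\Lambda_i^\ast$ and $\Lambda_j^\ast$, and the Schur-complement corrections land at the four entries $(i,i)$, $(i,j)$, $(j,i)$, $(j,j)$ — producing in particular the new off-diagonal coupling $-(\beta_{\Lambda_i,\Lambda_j})^{-1}$ of the shortened path, which your two-step splicing would never generate. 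Your own mechanism handles this verbatim; drop the reduction and treat the interior case as a third instance of the same computation.
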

%%%%%%%%%%%%%%%%%%%%%%%%%%%%%%%%%%%%%
\begin{proof}
	
	We have three cases to consider: the case where $i = 0$, where $j=n+1$ and finally $0<i<  j <n+1 $. To simplify our notations, for $0\leq s < t \leq n+1$ we denote by $S(s,t)$ the matrix $S(\Lambda_s, \dots, \Lambda_t )$.

	$\bullet$ First case:  $i=0, j<n+1$.
	
	The support of the matrix $S(0,n+1)$ is  
	\[
	\Lambda_1^\ast \oplus \Lambda_{j-1}^\ast\oplus\Lambda_j^\ast \cdots \oplus \Lambda_n^\ast
	\]
	and its restriction to
	$\Lambda_1^\ast\oplus \cdots  \oplus \Lambda_{j-1}^\ast$, is by construction $S(0,j) $, which is non-degenerate by Corollary~\ref{cor:sylvdettrans}. In particular, 
	\[
	S(0,{n+1}) = S(0,j) \bot S(0,j)^\bot.
	\] 
	Let us compute the orthogonal of $\Lambda_1^\ast \oplus \cdots \oplus \Lambda_{j-1}^\ast$ with respect to $S(0,n+1)$. As this matrix is trigonal, the subspace $\Lambda_{j+1}^\ast \oplus\cdots \oplus  \Lambda_{n}^\ast$ is in the orthogonal. 
	Commutativity of the following diagram
	\[
	\xymatrix{
		\Lambda_j^\ast \ar[dr]^{\beta_{0,j}^{-1}} & & \\
		&\Lambda_0 \ar[r]^-{\beta_{0,j+1}} \ar[d]^-{E_0} & \Lambda_{j+1}^\ast \ar[d]^{[0,\dots,0,-(\beta_{j,j+1})^{-1}] }\\
		&**[l]  \Lambda_1^\ast \oplus \cdots \oplus \Lambda_{j}^\ast \ar[r]_{S(0,j+1)} & **[r]\Lambda_1 \oplus \cdots \oplus \Lambda_{j}
	}
	\]
	shows that the copy of  $\Lambda_j^\ast$ included in the support of $S(0,n+1)$ via the map  $E_{0,j}\beta_{0,j}^{-1}$  is also in the orthogonal of $\Lambda_1^\ast \oplus \Lambda_{j-1}$; indeed its image under the matrix   $S(0,n+1)$ has as first  $j-1$ coefficients equal to zero. Since moreover this copy of $\Lambda_j^\ast$  is in direct sum with the preceding subspace, by dimensional reasons  the orthogonal is $\Lambda_j^\ast \oplus \Lambda_{j+1} \oplus \cdots \oplus \Lambda_{n}^\ast$. Let us now compute the restriction of $S(0,n+1)$ to this subspace.

	Observe that the inclusion of the orthogonal  to $\Lambda_{j}^\ast \oplus \dots \oplus \Lambda_{n}^\ast$ into the support $S(0,n+1)$ is given by the matrix:
	\[
	P= \left(
	\begin{matrix}
		E_0 \beta_{0,j}^{-1} & 0 \\
		0 &  Id_{n-j-1}
	\end{matrix}
	\right),
	\]
	with domain $\Lambda_j^\ast \oplus \cdots \oplus \Lambda_n^\ast$ and codomain $\Lambda_1 \oplus \cdots \oplus \Lambda_{n}$.
	Let us write the matrix $S(0,n+1)$ by blocks.
	\[
	S=
	\left(
	\begin{matrix}
		S(0,j+1) & A_j \\
		A_j^\ast & S(j,n+1)
	\end{matrix}
	\right)
	\]
	where
	\[
	A_j= 
	\left(
	\begin{matrix}
		0 &  \\
		\vdots & \mbox{\Huge0} \\
		0 & \\
		\beta_{j,j+1}^{-1} & 0 \cdots 0
	\end{matrix}
	\right).
	\]
	
	Then the matrix of $S(0,n+1)$ restricted to $\Lambda_j^\ast \oplus \dots \oplus \Lambda_{n}^\ast$ is
	\[
	P^\ast S(0,n+1)P = \left(
	\begin{matrix}
		\beta_{0,j}^{-1 \ast} E_0^\ast S(0,j+1) E_0 \beta_{0,j}^{-1} & \beta_{0,j}^{-1 \ast} E_0^\ast A_j \\
		A_j^\ast  E_0 \beta_{0,j}^{-1} & S(j,n+1)
	\end{matrix}
	\right)
	\]
	
	To compute the top left corner of the matrix, we use the commutative diagram given by Proposition~\ref{prop:transeqnondeg}:
	\[
	\xymatrix{
		\Lambda_j^\ast \ar[dr]^{\beta_{0,j}^{-1}} & & & & \\
		&\Lambda_0 \ar[r]^-{\beta_{0,j+1}} \ar[d]^-{E_0} & \Lambda_{j+1}^\ast \ar[d]^{T_n } & & \\
		&**[l]  \Lambda_1^\ast \oplus \cdots \oplus \Lambda_{j}^\ast \ar[r]_{S(0,j+1)} & \Lambda_1 \oplus \cdots \oplus \Lambda_{j}  \ar[r]^-{E_0^\ast} & \Lambda_0^\ast \ar[r]^{\beta_{0,j}^{-1 \ast}} & \Lambda_j  
	}
	\]
	which shows that
	\begin{eqnarray*}
		\beta_{0,j}^{-1 \ast} E_0^\ast S(0,j+1) E_0 \beta_{0,j}^{-1}  & = &  \beta_{0,j}^{-1 \ast}E_0^\ast T_n\beta_{0,j+1} \beta_{0,j}^{-1} \\
		& = & -\beta_{j,0}^{-1}\beta_{0,j}^\ast (-\beta_{j,j+1}^{-1})\beta_{0,j+1}\beta_{0,j}^{-1} \\
		& = &  -\beta_{j,0}^{-1}-\beta_{j,0} (-\beta_{j,j+1}^{-1})\beta_{0,j+1}\beta_{0,j}^{-1} \\
		& = & (-\beta_{j,j+1}^{-1})\beta_{0,j+1}\beta_{0,j}^{-1} \\
		& = & d_{j}(0,j+1) \textrm{ by Lemma~\ref{lem reldiffbeta}}
	\end{eqnarray*}
	
	In the same way
	\begin{eqnarray*}
		\beta_{0,j}^{-1 \ast} E_0^\ast A_j&  =&  [\beta_{j,0}^{-1}\beta_{1,0}, \dots, \beta_{j,0}^{-1}\beta_{j,0} ] A_j \\
		& = & [ \beta_{j,j+1}^{-1},  0, \dots,0]
	\end{eqnarray*}
	and so,
	\[
	P^\ast S(0,n+1)P = \left(
	\begin{matrix}
		d_{j}(0,j+1) &  \beta_{j,0}^{-1}\beta_{j,0}\beta_{j,j+1}^{-1},  0, \dots,0 \\
		-\beta_{j+1,j} &  \\
		0 & \\
		\vdots & S(j,n+1) \\
		0 & 
	\end{matrix}
	\right) = S(0,j,\dots,n+1)
	\]

	$\bullet$ For the case where  $i<n, j=n+1$ we proceed in a similar way. The same argument as before shows that
	\[
	S(0,{n+1}) = S(i,n+1)^\bot \bot S(i,n+1),
	\]
	that the subspace $\Lambda_1^\ast \oplus \cdots \oplus \Lambda_{i-1}^\ast$ is in $S(i,n+1)^\bot$ and the commutative diagram
	
	\[
	\xymatrix{
		\Lambda_i^\ast \ar[dr]^{\beta_{n+1,i}^{-1}} & & \\
		&\Lambda_{n+1} \ar[r]^-{\beta_{n+1,i-1}} \ar[d]^-{F_{n+1}} &  \Lambda_{i-1}^\ast \ar[d]^{[\beta_{i,i-1}^{-1}, 0,\dots,0] }\\
		&  **[l]\Lambda_{i}^\ast \oplus \cdots \oplus \Lambda_{n}^\ast \ar[r]_{S(i-1,n+1)} & **[r] \Lambda_{i} \oplus \cdots \oplus \Lambda_{n}
	}
	\]
	shows that the copy of  $\Lambda_i$ included via $ F_{n+1}\beta_{n+1,i}^{-1}$ is in the orthogonal too. 
	
	Then, if we denote by 
	\[
	Q = \left( 
	\begin{matrix}
		Id_{i} & 0 \\
		0 & F_{n+1}\beta_{n+1,i}^{-1}
	\end{matrix}
	\right)
	\]
	the inclusion of $\Lambda_{1}^\ast \oplus \cdots \oplus  \Lambda_{i-1}^\ast \oplus \Lambda_i^\ast$ as the orthogonal to $S(i,n+1)$, we have:
	
	\[
	Q^\ast S(0,n+1)Q =  \left(
	\begin{matrix}
		S(0,i) & A_{i-1}F_{n+1}\beta_{n+1,i}^{-1}  \\  
		\beta_{i,n+1}^{-1}F_{n+1}^\ast A_{i-1}^\ast &  \beta_{n+1,i}^{-1 \ast} F_{n+1}^\ast S(i-1,n+1) F_{n+1} \beta_{n+1,i}^{-1}  
	\end{matrix}
	\right)
	\]
	And, as before,
	\[
	A_{i-1}F_{n+1}\beta_{n+1,i}^{-1} = \left( \begin{matrix} 0 \\ \vdots \\ 0 \\ \beta_{i-1,i}\end{matrix} \right)
	\]
	and using the above commutative diagram:
	\begin{eqnarray*}
		Q^\ast S(i-1,n+1)Q  &= & \beta_{n+1,i}^{-1 \ast} F_{n+1}^\ast S(i-1,n+1) F_{n+1} \beta_{n+1,i}^{-1} \\
		& = & -\beta_{i,n+1}^{-1}\beta_{i,n+1} \beta_{i,i-1}^{-1} \beta_{n+1,i-1} \beta_{n+1,i}^{-1} \\
		& = & \beta_{i,i-1}^{-1} \beta_{n+1,i-1} \beta_{n+1,i}^{-1} \\
		& = & -d_{i}(n+1,i-1) \\
		& = & d_i(i-1,n+1)
	\end{eqnarray*}
	and hence
	\[
	Q^\ast S(0,n+1)Q =  \left(
	\begin{matrix}
		&    0 \\ 
		S(0,i) & \vdots \\
		& 0 \\
		&   \beta_{i-1,i}^{-1}\\  
		0\cdots0-\beta_{i,i-1}^{-1}    & d_{i}(i-1,n+1)
	\end{matrix}
	\right)= S(0\cdots i n+1).
	\]
	
	$\bullet$ We are left the case where $0<i<j<n+1$. Observe that if  $j=i+1$ we have nothing to prove, and so assume that $i+1<j$ and as before we start by computing $S(i,j)^\bot$.

	Here the orthogonal is given by two subspaces. First, as before, the Lagrangians with index too far away from  $i$ and $j$ are in the orthogonal: 
	 \[
	 (\Lambda_1^\ast \oplus \cdots \oplus \Lambda_{i-1}^\ast) \bigoplus (\Lambda_{j+1}^\ast \oplus \cdots \oplus \Lambda_n^\ast) \subset S(i,j)^\bot.
	 \]
	
	The same computation as in the first case shows also that the subspace $\Lambda_i^\ast$ included via the composite
	\[
	\xymatrix{
		\Lambda_i^\ast \ar[r]^{\beta_{j,i}^{-1}} & \Lambda_j \ar[r]^-{F_j} & \Lambda_i^\ast \oplus \cdots \oplus \Lambda_{j-1}^\ast
	}
	\]
	is in the orthogonal; and as in the second case, the subspaces $\Lambda_j^\ast$ included via:
	\[
	\xymatrix{
		\Lambda_j^\ast \ar[r]^{\beta_{i,j}^{-1}} & \Lambda_i \ar[r]^-{E_i} & \Lambda_{i+1}^\ast \oplus \cdots \oplus \Lambda_{j}^\ast
	}
	\]
	are in the orthogonal too. This orthogonal $S(i,j)^\bot$ is therefore isomorphic to $(\Lambda_1^\ast \oplus \cdots \oplus \Lambda_{i}^\ast) \bigoplus (\Lambda_{j}^\ast \oplus \cdots \oplus \Lambda_n^\ast)$, and the matrix of the inclusion of this subspace into the support of $S(0,n+1)$ is given, writing blocks according to the decomposition of the source into $(\Lambda_1^\ast \oplus \cdots \oplus \Lambda_{i-1}^\ast)\oplus (\Lambda_{i}^\ast \oplus \Lambda_{j}^\ast) \oplus (\Lambda_{j+1}^\ast \oplus \cdots \oplus \Lambda_n^\ast)$ by
	\[
	J=\left(
	\begin{matrix}
		Id & 0   & 0 \\
		0 & E & 0 \\
		0 & 0 & Id 
	\end{matrix}
	\right)
	\]
	where $E$, written by blocks of size   $2 \times (j-i)$, with domain  $\Lambda_i^\ast\oplus \Lambda_j^\ast$ and codomains $\Lambda_{i}^\ast \oplus \cdots \Lambda_{j}^\ast$ is
	\[
	E=  \left(
	\begin{array}{c|c}
		& 0   \\
		F_j\beta_{j,i}^{-1} &  \\ 
		& E_i\beta_{i,j}^{-1} \\
		0 &     
	\end{array}
	\right).
	\]
	
	To compute the restriction of $S(0,n+1)$ to the $S(i,j)^\bot$, we write the matrix $S(0,n+1)$ by blocks  according to the decomposition of the source as $(\Lambda_1^\ast \oplus \cdots \oplus \Lambda_{i-1}^\ast)\oplus (\Lambda_{i}^\ast \oplus \cdots \oplus \Lambda_{j}^\ast) \oplus (\Lambda_{j+1}^\ast \oplus \cdots \oplus \Lambda_n^\ast)$
	\[
	S(0,n+1) = 
	\left(
	\begin{array}{cccc}
		S(0,i) & A_{i-1}   & 0 \\
		A_{i-1}^\ast & S(i-1,j+1) & A_{j} \\
		0 & A_{j}^\ast & S(j,n+1) 
	\end{array}
	\right).
	\]
	The product  $J^\ast S(0,n+1)J $ is:
	
	\[
	\left(
	\begin{array}{cccc}
		S(0,i) & A_{i-1}E  & 0 \\
		E^\ast A_{i-1}^\ast & E^\ast S(i-1,j+1)E & E^\ast A_{j} \\
		0 & A_{j}^\ast E & S(j,n+1) 
	\end{array}
	\right).
	\]
	
	A first direct computation shows that
	\[
	A_{i-1}E =\left( \begin{matrix}  0 & 0 \\ -\beta_{i-1,i}^{-1} & 0\end{matrix} \right) \textrm{ et } A_{j}^\ast E =\left( \begin{matrix}  0 &  \beta_{j+1,j}^{-1}  \\ 0& 0  \end{matrix} \right).
	\]
	
	The diagrams in Propositions~\ref{prop:transeqnondeg} and~\ref{prop:transeqnondegalt}, show that
	
	\begin{align*}
	S(i-1,j+1)E & = 
	\left(
	\begin{matrix}
		\beta_{i+1,i}^{-1}\beta_{j,i}\beta_{j,i}^{-1} & -\beta_{i,i+1}^{-1}\beta_{i,i+1}\beta_{i,j}^{-1} \\
		0  &   0  \\
		\vdots & \vdots \\
		0 & 0 \\
		\beta_{j,j-1}^{-1} \beta_{j,j-1}\beta_{j,i}^{-1} &  \beta_{j,j+1}^{-1}\beta_{i,j+1}\beta_{i,j}^{-1} 
	\end{matrix}
	\right) \\
	& =
	\left(
	\begin{matrix}
		\beta_{i+1,i}^{-1} & -\beta_{i,j}^{-1}  \\
		0  &   0  \\
		\vdots & \vdots \\
		0 & 0 \\
		\beta_{j,i}^{-1} & \beta_{j,j+1}^{-1}\beta_{i,j+1}\beta_{i,j}^{-1} 
	\end{matrix}
	\right).
	\end{align*}
	and analogous computations as those carried out in the first two parts show that 
	\begin{align*}
		E^\ast S(i-1,j+1)E & =
		\left(
		\begin{matrix}
			-\beta_{i,j}^{-1}F_j^\ast S(i-1,j)F_j\beta_{j,i}^{-1}  &   -\beta_{i,j}^{-1}\beta_{i,j}\beta_{i,j}^{-1} \\
			-\beta_{j,i}^{-1}-\beta_{j,i} \beta_{j,j-1}^{-1} \beta_{j,j-1}\beta_{j,i}^{-1} & -\beta_{j,i}^{-1}E_i^\ast S(i,j+1)E_i\beta_{i,j}^{-1}
		\end{matrix}
		\right) \\
		& =   \left(
		\begin{matrix}
			d_i(i-1,j)  &   -\beta_{i,j}^{-1} \\
			\beta_{j,i}^{-1} &  d_j(i,j+1)
		\end{matrix}
		\right) \\
	\end{align*}
	
	To sum up the matrix $J^\ast S(0,n+1) J$ is equal to
	
	\[ 
	\left(
	\begin{array}{ccc|cc|ccc}
		& & & & & & &  \\
		&\Large{ S(0,i)}  & & & && 0 & \\
		&   & &  -\beta_ {i-1,j}^ {-1} & 0 &  &  \\
		\hline
		0 & 0&  \beta_{j,i-1}^{-1} & d_i(i-1,j) & -\beta_{i,j}^{-1} & 0 & 0 & 0\\
		0& 0 & 0 & \beta_{j,i}^{-1}& d_j(i,j+1) & -\beta_{j,j+1}^{-1} & 0 & 0\\
		\hline
		&  &  & 0 & \beta_{j+1,j}^{-1} & &  \\
		& {\Large{ 0}}  & & 0 & 0 & & S(j+1,n+1) & \\
		&  &  &  &   &  & 
	\end{array} 
	\right)
	\]
	and we recognize $S(0\cdots i j \cdots n+1)$.
\end{proof}

The following is an example of the flexibility in manipulating Sylvester matrices that the shortcut lemma provides us with:

%%%%%%%%%%%%%%%%%%%%%%%%%%%%%%%%%%%%%
\begin{corollary}\label{cor:les4chemins}
%%%%%%%%%%%%%%%%%%%%%%%%%%%%%%%%%%%%%
Let $\Lambda_0, \Lambda_1, \dots, \Lambda_n, \Lambda_0$ be a Lagrangian loop and let $M$ denote a Lagrangian that is transverse to $\Lambda_0$.  Then in  $W(k)$ the following four Sylvester matrices are equal:
	\[
	\begin{array}{ll}
	(i)\ S(\Lambda_0,\Lambda_1, \dots,\Lambda_n) & (ii)\ S(\Lambda_1,\dots,\Lambda_n, \Lambda_0) \\
	(iii)\ S(\Lambda_0,\dots,\Lambda_n,\Lambda_0,M) & (iv)\  S(M,\Lambda_0,\Lambda_1,\dots,\Lambda_n,\Lambda_0)
	\end{array}
	\]
\end{corollary}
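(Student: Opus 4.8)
The plan is to realize all four Sylvester matrices as orthogonal summands of one and the same form, namely that of an auxiliary Lagrangian loop. Set
\[
R\colon\qquad M,\ \Lambda_0,\ \Lambda_1,\ \dots,\ \Lambda_n,\ \Lambda_0,\ M .
\]
This is a genuine Lagrangian path (a loop based at $M$): all consecutive pairs are transverse, the two outermost because $M\pitchfork\Lambda_0$ and the inner ones because $\Lambda_0,\Lambda_1,\dots,\Lambda_n,\Lambda_0$ is the given loop. I label the $n+4$ terms of $R$ as $R_0=M$, $R_k=\Lambda_{k-1}$ for $1\le k\le n+1$, $R_{n+2}=\Lambda_0$, $R_{n+3}=M$, and I apply the Shortcut Lemma~\ref{lem:raccourci} to $R$ at the four index pairs $(1,n+1)$, $(2,n+2)$, $(1,n+3)$, $(0,n+2)$. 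These are legitimate: $R_1\pitchfork R_{n+1}$ is $\Lambda_0\pitchfork\Lambda_n$, $R_2\pitchfork R_{n+2}$ is $\Lambda_1\pitchfork\Lambda_0$, and $R_1\pitchfork R_{n+3}$, $R_0\pitchfork R_{n+2}$ are both $\Lambda_0\pitchfork M$.

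Reading off the sub-path and the shortened path in each case, the Shortcut Lemma produces the four isometries
\begin{align*}
S(R)&\cong S(\Lambda_0,\dots,\Lambda_n)\ \bot\ S(M,\Lambda_0,\Lambda_n,\Lambda_0,M),\\
S(R)&\cong S(\Lambda_1,\dots,\Lambda_n,\Lambda_0)\ \bot\ S(M,\Lambda_0,\Lambda_1,\Lambda_0,M),\\
S(R)&\cong S(\Lambda_0,\dots,\Lambda_n,\Lambda_0,M)\ \bot\ S(M,\Lambda_0,M),\\
S(R)&\cong S(M,\Lambda_0,\Lambda_1,\dots,\Lambda_n,\Lambda_0)\ \bot\ S(M,\Lambda_0,M),
\end{align*}
whose left-hand summands are exactly the forms $(i),(ii),(iii),(iv)$ of the statement. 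Hence it remains only to show that the three error terms $S(M,\Lambda_0,M)$, $S(M,\Lambda_0,\Lambda_n,\Lambda_0,M)$ and $S(M,\Lambda_0,\Lambda_1,\Lambda_0,M)$ all vanish in $W(k)$.

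The first is transparent from Definition~\ref{def matSylvester}: $S(M,\Lambda_0,M)$ has support $\Lambda_0^\ast$ and unique block coefficient $d_{\Lambda_0}(M,M)=0$, so it is the totally degenerate zero form, whose regularization, hence class in $W(k)$, is $0$. For a term $S(M,\Lambda_0,\Lambda',\Lambda_0,M)$ with $\Lambda'\in\{\Lambda_1,\Lambda_n\}$, I apply the Shortcut Lemma once more: since the first term $M$ and the last-but-one term $\Lambda_0$ of that path are transverse,
\[
S(M,\Lambda_0,\Lambda',\Lambda_0,M)\ \cong\ S(M,\Lambda_0,\Lambda',\Lambda_0)\ \bot\ S(M,\Lambda_0,M),
\]
and the second summand has just been handled. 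The first summand has support $\Lambda_0^\ast\oplus\Lambda'^\ast$ and, by Definition~\ref{def matSylvester}, block matrix
\[
\begin{pmatrix}
d_{\Lambda_0}(M,\Lambda') & -(\beta_{\Lambda_0,\Lambda'})^{-1}\\
(\beta_{\Lambda',\Lambda_0})^{-1} & d_{\Lambda'}(\Lambda_0,\Lambda_0)
\end{pmatrix}
=
\begin{pmatrix}
d_{\Lambda_0}(M,\Lambda') & -(\beta_{\Lambda_0,\Lambda'})^{-1}\\
(\beta_{\Lambda',\Lambda_0})^{-1} & 0
\end{pmatrix},
\]
the lower-right block vanishing because $d_{\Lambda'}(\Lambda_0,\Lambda_0)=0$. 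This form is non-degenerate by Corollary~\ref{cor:sylvdettrans}, its extreme Lagrangians $M$ and $\Lambda_0$ being transverse, and since $(\beta_{\Lambda',\Lambda_0})^{-1}$ is invertible by Lemma~\ref{lem bildeuxlag}(4), a direct check shows that the summand $0\oplus\Lambda'^\ast$ of the support coincides with its own orthogonal. So $S(M,\Lambda_0,\Lambda',\Lambda_0)$ is neutral, hence $0$ in $W(k)$. Therefore all three error terms vanish, and $(i)=(ii)=(iii)=(iv)=\overline{S(R)}$ in $W(k)$.

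I do not anticipate a genuine obstacle: the one inventive step is the choice of the auxiliary loop $R$, after which the proof is pure bookkeeping with the Shortcut Lemma plus one $2\times2$ block computation straight from Definition~\ref{def matSylvester}. (When $n=1$ the pairs $(1,n+1)$ and $(2,n+2)$ become consecutive and the first two isometries degenerate to tautologies, but then $(i)$ and $(ii)$ have empty support and the conclusion is unaffected. If one prefers not to involve $M$ in the equality $(i)=(ii)$, the path $\Lambda_0,\Lambda_1,\dots,\Lambda_n,\Lambda_0,\Lambda_1$ works just as well, its two relevant shortcuts producing $(i)$ and $(ii)$ with a metabolic, respectively hyperbolic, complement.)
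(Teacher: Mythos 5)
Your proof is correct and rests on the same essential mechanism as the paper's: the Shortcut Lemma combined with the observation that the leftover small blocks are neutral because a repeated Lagrangian forces a zero diagonal entry ($d_{\Lambda'}(\Lambda_0,\Lambda_0)=0$, resp.\ $d_{\Lambda_0}(M,M)=0$) sitting next to an invertible off-diagonal $\beta^{-1}$. The only difference is organizational: the paper shortcuts directly inside the sequences $(iii)$ and $(iv)$ to peel off $(i)$ and $(ii)$ with $2\times 2$ neutral complements, whereas you route all four forms through the auxiliary loop $M,\Lambda_0,\dots,\Lambda_n,\Lambda_0,M$ — equivalent bookkeeping once the Shortcut Lemma is available.
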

%%%%%%%%%%%%%%%%%%%%%%%%%%%%%%%%%%%%%
\begin{proof}
	We apply the Shortcut Lemma~\ref{lem:raccourci} to the Lagrangians $\Lambda_0$ and $\Lambda_n$  in the sequence $\Lambda_0,\Lambda_1, \dots \Lambda_n,\Lambda_0,M$; this gives us an isometry:
	\[
	S(\Lambda_0,\Lambda_1,\dots,\Lambda_n,\Lambda_0,M) = S(\Lambda_0,\Lambda_1,\dots,\Lambda_n) \oplus S(\Lambda_0,\Lambda_n,\Lambda_0,M)
	\]
	
	The matrix $S(\Lambda_0,\Lambda_n,\Lambda_0,M)$ has as support the space  $\Lambda_n^\ast \Lambda_0^\ast$ and has the form:
	\[
	\left(
	\begin{matrix}
	d(\Lambda_0,\Lambda_0) & \ \beta \\
	\beta & d(M,\Lambda_0)
	\end{matrix}
	\right) = \left(
	\begin{matrix}
	0 & \ \beta \\
	\beta & d(M,\Lambda_0)
	\end{matrix}
	\right).
	\]
	
	Since $\Lambda_0 \pitchfork N$, $\beta$ is invertible, and this matrix represents a neutral form; this proves that  $(i)$ and $(iii)$ are equal. The equality between $(iii)$ and $(ii)$ can be shown similarly using that  $\Lambda_1$ and the second copy of  $\Lambda_0$ in the sequence  \[
	\Lambda_0,\Lambda_1, \dots \Lambda_n,\Lambda_0,M\]
	are transverse. Finally, the same argument but using that  $
	\Lambda_0 \pitchfork \Lambda_n$ in the sequence $M,\Lambda_0, \dots,\Lambda_n,\Lambda_0$ shows that $(iv)$ and $(i)$ are equal.
\end{proof}

\section{The Maslov index}\label{sec:Maslovindex}
%%%%%%%%%%%%%%%%%%%%%%%%%%%%%%%%%%%%%
%%%%%%%%%%%%%%%%%%%%%%%%%%%%%%%%%%%%%

%%%%%%%%%%%%%%%%%%%%%%%%%%%%%%%%%%%%%
\subsection{Index of a Lagrangian path}
%%%%%%%%%%%%%%%%%%%%%%%%%%%%%%%%%%%%%
For a Lagrangian path the definition of the Maslov index is straightforward.
%%%%%%%%%%%%%%%%%%%%%%%%%%%%%%%%%%%%%
\begin{definition}\label{def indicechemin}
	%%%%%%%%%%%%%%%%%%%%%%%%%%%%%%%%%%%%%
	Let $\alpha: \Lambda_0, \Lambda_1, \dots, \Lambda_n,\Lambda_{n+1}$ be a Lagrangian path. The class in $W(k)$ of the Sylvester matrix  $S(\alpha)$ is the Maslov index of  $\alpha$, we denote it by  $\Mas(\alpha)$, or $\Mas(\Lambda_0, \cdots,\Lambda_{n+1})$ if needed.
\end{definition}
%%%%%%%%%%%%%%%%%%%%%%%%%%%%%%%%%%%%%

In contrast, for a Lagrangian \emph{loop}, the Shortcut Lemma~\ref{lem:raccourci}, gives 4 different ways to compute its  Maslov index.

%%%%%%%%%%%%%%%%%%%%%%%%%%%%%%%%%%%%%
\begin{lemma}\label{lem:diffpathloopirrelevant}
%%%%%%%%%%%%%%%%%%%%%%%%%%%%%%%%%%%%%
	Let $\omega:\Lambda_0, \Lambda_1, \dots, \Lambda_n,\Lambda_{0}$ be a Lagrqangian loop bases at $\Lambda_0$, and let $M \pitchfork\Lambda$ be an arbitrary Lagrangian.  Then in $W(k)$ the four Sylvester matrices
		\[
	\begin{array}{ll}
		(i)\ S(\Lambda_0,\Lambda_1, \dots,\Lambda_n) & (ii)\ S(\Lambda_1,\dots,\Lambda_n, \Lambda_0) \\
		(iii)\ S(\Lambda_0,\dots,\Lambda_n,\Lambda_0,M) & (iv)\  S(M,\Lambda_0,\Lambda_1,\dots,\Lambda_n,\Lambda_0)
	\end{array}
	\]
	define belong to the class $\Mas(\omega)$.
\end{lemma}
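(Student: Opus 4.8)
The plan is to reduce the statement to Corollary~\ref{cor:les4chemins}, which already establishes that the four Sylvester matrices $(i)$--$(iv)$ represent one and the same class in $W(k)$; the only genuinely new point is to identify that common class with $\Mas(\omega)$.

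By Definition~\ref{def indicechemin}, $\Mas(\omega)$ is the class in $W(k)$ of $S(\omega)=S(\Lambda_0,\Lambda_1,\dots,\Lambda_n,\Lambda_0)$, a symmetric bilinear form with support $\Lambda_1^\ast\oplus\cdots\oplus\Lambda_n^\ast$. This is not literally one of $(i)$--$(iv)$, so a short computation is needed. First I would apply the Shortcut Lemma~\ref{lem:raccourci} to the path $\Lambda_0,\Lambda_1,\dots,\Lambda_n,\Lambda_0$ with the pair of indices $i=0$ and $j=n$; the repeated Lagrangian causes no difficulty, since the Lemma only asks for indices $i<j$ with $\Lambda_i\pitchfork\Lambda_j$, and the transversality $\Lambda_0\pitchfork\Lambda_n$ is precisely the transversality of the last consecutive pair of the loop $\omega$. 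The Lemma then yields that
\[
S(\omega)\ \text{is isometric to}\ S(\Lambda_0,\Lambda_1,\dots,\Lambda_n)\ \bot\ S(\Lambda_0,\Lambda_n,\Lambda_0),
\]
whose first summand is exactly the form $(i)$.

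It remains to check that the second summand vanishes in $W(k)$. The path $\Lambda_0,\Lambda_n,\Lambda_0$ has a single intermediate term, so by Definition~\ref{def matSylvester} the form $S(\Lambda_0,\Lambda_n,\Lambda_0)$ is the $1\times 1$ block $d_{\Lambda_n}(\Lambda_0,\Lambda_0)$, the difference in the affine structure on $\mathcal{L}_{\pitchfork\Lambda_n}$ of $\Lambda_0$ with itself; this difference is $0$, so the summand is the zero form on $\Lambda_n^\ast$, which regularizes to $0$. Hence $\Mas(\omega)=[S(\Lambda_0,\dots,\Lambda_n)]$, i.e. the class $(i)$, and Corollary~\ref{cor:les4chemins} then identifies it with $(ii)$, $(iii)$ and $(iv)$ as well. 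I do not expect a genuine obstacle: the only points demanding care are that the Shortcut Lemma is being invoked on a path whose two endpoints coincide, and that the excised block $S(\Lambda_0,\Lambda_n,\Lambda_0)$ is recognized as totally degenerate (hence Witt-trivial) rather than merely neutral.
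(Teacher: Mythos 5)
Your proof is correct and takes essentially the same route as the paper's: the paper likewise applies the Shortcut Lemma to the pair $i=0$, $j=n$ to split off $S(\Lambda_0,\Lambda_n,\Lambda_0)$, identifies that block as the zero form on $\Lambda_n^\ast$ (hence Witt-trivial after regularization), and concludes $\Mas(\omega)=[S(\Lambda_0,\dots,\Lambda_n)]$. Your use of Corollary~\ref{cor:les4chemins} to handle the mutual equality of the four forms is a clean way of packaging what the paper dismisses as ``the other three cases are similar.''
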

%%%%%%%%%%%%%%%%%%%%%%%%%%%%%%%%%%%%%%
\begin{proof}
	We only write down the proof for $i)$, the proof for the other $3$ cases are similar. By the shortcut Lemma~\ref{lem:raccourci}, as bilinear forms:
	\begin{eqnarray*}
		S(\Lambda_0,\cdots,\Lambda_0) & = & S(\Lambda_0,\cdots,\Lambda_{n}) \bot S(\Lambda_0,\Lambda_n,\Lambda_0).
	\end{eqnarray*}
Now $(\Lambda_0,\Lambda_n,\Lambda_0)$ is the trivial bilinear form with support $\Lambda_n^\ast$, and hence in $W(k)$ (i.e. after regularization), $\Mas(\omega)= \Mas(\Lambda_0,\cdots,\Lambda_0) = \Mas(\Lambda_0,\cdots,\Lambda_{n})$.
\end{proof}
%%%%%%%%%%%%%%%%%%%%%%%%%%%%%%%%%%%%%

We know explicit the relation between concatenation of paths and the Maslov index.

%%%%%%%%%%%%%%%%%%%%%%%%%%%%%%%%%%%%%
\begin{proposition}\label{prop actilacets}
%%%%%%%%%%%%%%%%%%%%%%%%%%%%%%%%%%%%%
	Let $\Lambda$ be a Lagrangian, let $\alpha$ be a Lagrangian path endining at  $\Lambda$, $\beta$ a Lagrangian path starting at $\Lambda$ and $\omega$ a Lagrangian loop pased at $\Lambda$ (here $\alpha$ and $\beta$  could be loops or epmty). Then 
	\[
	S(\alpha \ast \omega \ast \beta) = S(\omega) \bot S(\alpha \ast \beta),
	\]
	and hence in $W(k)$
	\[
	\Mas(\alpha \ast \omega \ast \beta) = \Mas(\omega) + \Mas(\alpha \ast \beta).
	\]
\end{proposition}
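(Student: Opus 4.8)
The plan is to peel $S(\omega)$ off $S(\alpha\ast\omega\ast\beta)$ by two successive applications of the Shortcut Lemma~\ref{lem:raccourci}. The one real point is that $\omega$ begins and ends at the \emph{same} Lagrangian $\Lambda$, which is never transverse to itself, so $S(\omega)$ cannot be split off as a sub-block in a single stroke; instead I would split off a subpath of $\omega$ missing one endpoint, and then identify its Witt class with $\Mas(\omega)$ via Lemma~\ref{lem:diffpathloopirrelevant}. This mismatch of endpoints is the main (and essentially the only) obstacle; once it is handled, the rest is mechanical. Note also that, by a count of interior vertices, the left side has exactly $g$ more dimensions than the right, so the first displayed equality has to be read, as in Corollary~\ref{cor:les4chemins}, as an equality of Witt classes, i.e.\ it says the same thing as the ``hence'' clause.

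First dispatch the trivial case $\omega=\Lambda$ (the empty loop), where $\alpha\ast\omega\ast\beta=\alpha\ast\beta$ and $S(\omega)$ is empty. Otherwise write $\omega\colon\Lambda,\mu_1,\dots,\mu_{r-1},\Lambda$ with $r\ge 2$, so that $\mu_1\pitchfork\Lambda$ and $\mu_{r-1}\pitchfork\Lambda$, and display $\gamma:=\alpha\ast\omega\ast\beta$ as $\dots,\Lambda',\mu_1,\dots,\mu_{r-1},\Lambda'',\dots$, where $\Lambda'=\Lambda''=\Lambda$ are the $\alpha$--$\omega$ and $\omega$--$\beta$ junction vertices (if $\alpha$, resp.\ $\beta$, is empty the left, resp.\ right, string of dots is empty). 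Since $\mu_1\pitchfork\Lambda''$, I would first apply the Shortcut Lemma to the pair $(\mu_1,\Lambda'')$, obtaining
\[
S(\gamma)\ \cong\ S(\mu_1,\dots,\mu_{r-1},\Lambda)\ \bot\ S(\delta),\qquad \delta\colon\ \dots,\ \Lambda',\ \mu_1,\ \Lambda'',\ \dots
\]
(with the first factor empty and $\delta=\gamma$ in the degenerate case $r=2$). By case $(ii)$ of Lemma~\ref{lem:diffpathloopirrelevant} applied to the loop $\omega$, the form $S(\mu_1,\dots,\mu_{r-1},\Lambda)$ has Witt class $\Mas(\omega)$, so it suffices to show $\Mas(\delta)=\Mas(\alpha\ast\beta)$.

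For the second step, note that $\delta$ is just $\alpha\ast\beta$ with a ``$\Lambda,\mu_1,\Lambda$'' detour inserted at $\Lambda$. If $\beta$ is nonempty, write $\beta\colon\Lambda,\nu_1,\dots$; then $\Lambda'\pitchfork\nu_1$ and applying the Shortcut Lemma to $(\Lambda',\nu_1)$ in $\delta$ yields $S(\delta)\cong S(\Lambda,\mu_1,\Lambda,\nu_1)\bot S(\alpha\ast\beta)$. If $\beta$ is empty (the subcase $\alpha$ also empty being immediate), then $\alpha\ast\beta=\alpha$ ends $\dots,A_{p-1},\Lambda$ with $A_{p-1}$ its last vertex before $\Lambda$, and since $A_{p-1}\pitchfork\Lambda''$ the Shortcut Lemma applied to $(A_{p-1},\Lambda'')$ in $\delta\colon\dots,A_{p-1},\Lambda',\mu_1,\Lambda''$ gives $S(\delta)\cong S(A_{p-1},\Lambda,\mu_1,\Lambda)\bot S(\alpha)$; the subcase $\alpha$ empty, $\beta$ nonempty is symmetric, peeling off $\mu_{r-1}$ rather than $\mu_1$ in the first step. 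In every case the split-off corner is a $2\times 2$ block form whose diagonal block over $\mu_1^\ast$ is $d_{\mu_1}(\Lambda,\Lambda)=0$ and whose off-diagonal block is invertible (the two Lagrangians determining it being transverse); hence that form is non-degenerate and $\mu_1^\ast$, being totally isotropic of half the dimension, is a Lagrangian for it, so the corner is neutral.

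Combining the two steps, $S(\gamma)\cong F\bot N\bot S(\alpha\ast\beta)$ with $N$ neutral and $F$ of Witt class $\Mas(\omega)$; regularizing and passing to $W(k)$ gives
\[
\Mas(\alpha\ast\omega\ast\beta)=\Mas(\omega)+\Mas(\alpha\ast\beta),
\]
which is the assertion. The only delicate step is the first one: once $\omega$'s junction has been collapsed to the detour $\Lambda,\mu_1,\Lambda$, everything is a routine application of the Shortcut Lemma together with Lemma~\ref{lem:diffpathloopirrelevant}.
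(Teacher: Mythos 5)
Your proof is correct and takes essentially the same route as the paper: one application of the Shortcut Lemma~\ref{lem:raccourci} at the $\omega$--$\beta$ junction (using $\mu_1\pitchfork\Lambda$), with Lemma~\ref{lem:diffpathloopirrelevant} identifying the split-off factor as $\Mas(\omega)$. You are in fact more careful than the paper's own proof, which silently discards the residual detour $\Lambda,\mu_1,\Lambda$ from the shortened path; your second application of the Shortcut Lemma with the neutrality check on the $2\times 2$ corner block, and your observation that the first displayed identity can only be an equality of Witt classes (the supports differ by one copy of $\Lambda^\ast$), supply exactly what is elided.
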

%%%%%%%%%%%%%%%%%%%%%%%%%%%%%%%%%%%%%
\begin{proof}
	If $\alpha = \Lambda_0,\dots,\Lambda_{n},\Lambda$, $\omega = \Lambda,\Lambda_{n+2}\dots, \Lambda_{n+s},\Lambda$ and $\beta = \Lambda,\Lambda'_{n+2},\dots,\Lambda'_{n+p}$, then in the concatenation
	\[ \alpha \ast\omega \ast  \beta :\Lambda_0,\dots,\Lambda_{n},\Lambda ,\Lambda_{n+2}\dots,\Lambda_{n+s},\Lambda,\Lambda'_{n+2},\dots,\Lambda'_{n+p}
	\] we observe that by construction $\Lambda_{n+2} \pitchfork \Lambda$, the starting Lagrangian for $\beta$. Hence, by the Shortcut Lemma~\ref{lem:raccourci},
%	\[
%	\begin{align*}
%	S(\alpha \ast \omega \ast \beta) = &S(\Lambda_{n+2},\dots,\Lambda_{n+1})\bot  S(\Lambda_0,\dots,\Lambda_{n+1},\Lambda'_{n+2}\dots,\Lambda'_{n+1},\Lambda'_{n+p}) \\
%		= &S(\omega) \bot S(\alpha \ast \beta).
%	\end{align*}
%	\]
	\begin{equation*}
		\begin{split}
			S(\alpha \ast \omega \ast \beta) & = S(\Lambda_{n+2},\dots,\Lambda_{n+1})\bot  S(\Lambda_0,\dots,\Lambda_{n+1},\Lambda'_{n+2}\dots,\Lambda'_{n+1},\Lambda'_{n+p}) \\
		&	= S(\omega) \bot S(\alpha \ast \beta).
		\end{split}
	\end{equation*}
\end{proof}

%%%%%%%%%%%%%%%%%%%%%%%%%%%%%%%%%%%%%
\begin{proposition}\label{prop:relsdansMWR}
	%%%%%%%%%%%%%%%%%%%%%%%%%%%%%%%%%%%%%
	Let $\alpha:  \Lambda_0, \dots,\Lambda_{n+1}$ be a Lagrangian path, let  $\alpha^{-1}$ be its the inverse path and $\beta: \Lambda_0,M_1, \dots, M_m,\Lambda_0$ a loop based at  $\Lambda_0$. Then in $W(k)$
	\begin{enumerate}
		\item $S(\alpha \ast \alpha^{-1}) = 0$.
		\item $S(\alpha) = S(\beta) \bot S(\beta^{-1}\ast \alpha)$.
		\item $S(\alpha) = -S(\alpha^{-1})$.
	\end{enumerate}
\end{proposition}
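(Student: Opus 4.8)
All three identities follow formally from Proposition~\ref{prop actilacets} and the Shortcut Lemma~\ref{lem:raccourci}, the only genuine computation being a sign check in (3). I would prove them in the order (3), (1), (2): part (3) is self-contained, and part (1) feeds into part (2).

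For (3) the plan is to exhibit an honest isometry between $S(\alpha^{-1})$ and $-S(\alpha)$, which is stronger than the stated equality in $W(k)$. Writing $\alpha^{-1}\colon \Gamma_0,\dots,\Gamma_{n+1}$ with $\Gamma_k=\Lambda_{n+1-k}$, the support of $S(\alpha^{-1})$ is $\Gamma_1^\ast\oplus\cdots\oplus\Gamma_n^\ast=\Lambda_n^\ast\oplus\cdots\oplus\Lambda_1^\ast$, which is the support of $S(\alpha)$ with the summands listed in the opposite order. I would reorder them back to $\Lambda_1^\ast\oplus\cdots\oplus\Lambda_n^\ast$ (a permutation of the summands, hence an isometry) and compare the resulting block matrix with $-S(\alpha)$ entry by entry using Definition~\ref{def matSylvester}: on the diagonal $d_{\Lambda_i}(\Lambda_{i+1},\Lambda_{i-1})=-d_{\Lambda_i}(\Lambda_{i-1},\Lambda_{i+1})$ because $d$ is a difference in an affine space, and on the two off-diagonals the blocks $(\beta_{\Lambda_i,\Lambda_{i+1}})^{-1}$ and $(\beta_{\Lambda_{i+1},\Lambda_i})^{-1}$ exchange roles and acquire a sign via $\beta_{X,Y}=-\beta_{Y,X}^\ast$ (Lemma~\ref{lem bildeuxlag}(1)). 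The outcome is that the reordered matrix equals $-S(\alpha)$, which gives (3).

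For (1) I would induct on the length $n$ of $\alpha$. If $n=0$, then $\alpha\ast\alpha^{-1}$ is the path $(\Lambda_0,\Lambda_1,\Lambda_0)$, whose Sylvester matrix is the single block $d_{\Lambda_1}(\Lambda_0,\Lambda_0)=0$. For $n\geq 1$, write $\alpha=(\Lambda_0,\dots,\Lambda_{n+1})$ and decompose $\alpha\ast\alpha^{-1}=\widehat\alpha\ast\omega\ast\widehat\alpha^{-1}$ with $\widehat\alpha=(\Lambda_0,\dots,\Lambda_n)$ and $\omega=(\Lambda_n,\Lambda_{n+1},\Lambda_n)$ a loop based at $\Lambda_n$. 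Proposition~\ref{prop actilacets} then gives $S(\alpha\ast\alpha^{-1})=S(\omega)\bot S(\widehat\alpha\ast\widehat\alpha^{-1})$, and $S(\omega)$ is the zero form on $\Lambda_{n+1}^\ast$ (its only block is $d_{\Lambda_{n+1}}(\Lambda_n,\Lambda_n)=0$), so after regularization $S(\alpha\ast\alpha^{-1})=S(\widehat\alpha\ast\widehat\alpha^{-1})$ in $W(k)$, which vanishes by induction since $\widehat\alpha$ has length $n-1$. (One may instead apply the Shortcut Lemma~\ref{lem:raccourci} at the transverse pair $(\Lambda_{n-1},\Lambda_n)$ to split off the form $S(\Lambda_{n-1},\Lambda_n,\Lambda_{n+1},\Lambda_n)$, which is neutral because its lower-right block vanishes and its off-diagonal block $(\beta_{\Lambda_n,\Lambda_{n+1}})^{-1}$ is invertible, exactly as in the proof of Corollary~\ref{cor:les4chemins}.)

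For (2), note that $\beta\ast\beta^{-1}$ is again a Lagrangian loop based at $\Lambda_0$, so $S(\beta\ast\beta^{-1})=0$ in $W(k)$ by (1). Now $\beta\ast(\beta^{-1}\ast\alpha)$ and $(\beta\ast\beta^{-1})\ast\alpha$ are literally the same Lagrangian path $\gamma$, so $S(\gamma)$ is unambiguous; applying Proposition~\ref{prop actilacets} to peel the loop $\beta$ off the front of $\gamma$ gives $S(\gamma)=S(\beta)\bot S(\beta^{-1}\ast\alpha)$, while applying it to peel the loop $\beta\ast\beta^{-1}$ off the front gives $S(\gamma)=S(\beta\ast\beta^{-1})\bot S(\alpha)$; equating these and reducing in $W(k)$, where $S(\beta\ast\beta^{-1})=0$, yields (2). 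The only step requiring real care is (3): one must keep the two orderings of the summands $\Lambda_i^\ast$ and the signs coming from $d_X(A,B)=-d_X(B,A)$ and $\beta_{X,Y}=-\beta_{Y,X}^\ast$ consistent; everything else is formal manipulation built on Proposition~\ref{prop actilacets}.
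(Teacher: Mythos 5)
Your proposal is correct and follows essentially the same route as the paper: induction with a loop split off at the turning point for (1), two applications of the loop-peeling identity to $\beta\ast\beta^{-1}\ast\alpha$ for (2), and a direct block-by-block sign computation for (3). The only differences are cosmetic — you peel off the one-step loop $(\Lambda_n,\Lambda_{n+1},\Lambda_n)$ via Proposition~\ref{prop actilacets} where the paper splits off a neutral $2\times 2$ block with the Shortcut Lemma, and you spell out the sign check in (3) that the paper leaves as ``direct from the definitions.''
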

%%%%%%%%%%%%%%%%%%%%%%%%%%%%%%%%%%%%%%%
\begin{proof}
	\begin{enumerate}
		\item  We argue by induction on the length  $n$ of the path $\alpha$.
		
		If $n=0$, we have two transverse Lagrangians $L  \pitchfork M$, and $\alpha \ast \alpha^{-1} = LML$, which has a zero Sylvester matrix.
		
		If $n=1$, $\alpha \ast \alpha^{-1} = L,L_1,M,L_1,L$ , its Sylvester matrix $\alpha\ast\alpha^{-1}$ is a matrix with domain $(L_1,M)$ and codomain $(L_1^\ast,M^\ast)$:
		\[
		\left(
		\begin{matrix}
		d_{M,L} & b_{L,M}  \\
		b_{M,L}  & -d_{M,L}
		\end{matrix}
		\right)
		\]
		As $L \pitchfork L_1$, this is a non-degenerate form. An immediate computation shows then that both $L_1$ and $M$ are isotropic for this form which is therefore neutral.
		
		For the general case, let $n\geq 1$ and choose $\alpha$, a path of length $n+1$. In the middle of the loop  $\alpha \ast \alpha^{-1} = L_0,L_1, \dots,L_n L_{n+1}.M,L_{n+1},L_n \dots, L_0$, we find a sub-path  $\beta = L_n,L_{n+1},M,L_{n+1}$ that joins two mutually transverse Lagrangians. Let $\gamma = L_0,L_1,\dots, L_n,L_{n+1}$, it is a path of length  $n$, and observe that $ L_0,L_1, \dots,L_{n},L_{n+1},L_{n} \dots, L_0 = \gamma \ast  \gamma^{-1}$. By the Shortcut Lemma~\ref{lem:raccourci}
		\[
		S(\alpha \ast \alpha^{-1}) = S(\gamma \ast \gamma^{-1}) \bot S(\beta \ast \beta^{-1})
		\]
		where by induction the two rightmost terms are zero in $W(k)$.
		\item The path $\beta \ast \beta^{-1}\ast \alpha$ is given by the sequence
		\[
		\Lambda_0,M_1 \dots M_m ,\Lambda_{0},M_m \dots, M_1,\Lambda_0, \Lambda_1,\dots, \Lambda_n,\Lambda_{n+1}
		\]
		As  $\Lambda_0 \pitchfork \Lambda_1$ the shortcut Lemma~\ref{lem:raccourci} allows to write
		\begin{eqnarray*}
			S(\beta \ast \beta^{-1} \ast \alpha) & = & S( \Lambda_0,M_1 \dots,\Lambda_0, ,\Lambda_1) \bot S(\Lambda_{0},\Lambda_1 \dots, \Lambda_n,\Lambda_{n+1})\\
			& = & S(\beta \ast \beta^{-1}) \bot S(\alpha) \textrm{ by Proposition~\ref{prop actilacets}} \\
			& = &  S(\alpha) \textrm{ by part } 1. \\
		\end{eqnarray*}
		The, using that $\Lambda_0 \pitchfork M_m$ and Proposition~\ref{prop actilacets}, we have that
		\begin{eqnarray*}
			S(\beta \ast \beta^{-1} \ast \alpha) & = & S( \Lambda_0,M_1 \dots,\Lambda_0,M_m) \bot S(\Lambda_0,M_m,\dots,M_1,\Lambda_0,\Lambda_1, \dots, \Lambda_{n+1}) \\
			& = & S(\beta) \bot S(\beta^{-1}\ast \alpha)
		\end{eqnarray*}
		
		\item Direct from the definitions and Lemma~\ref{lem bildeuxlag}.
	\end{enumerate}

\end{proof}

%%%%%%%%%%%%%%%%%%%%%%%%%%%%%%%%%%%%%
\subsection{Index  of a triple of Lagrangians}~\label{subsec:indtriple}
%%%%%%%%%%%%%%%%%%%%%%%%%%%%%%%%%%%%%
%
%Nous nous situant dans toute la suite sur un corps $k$ de caractéristique $neq 2$.  Les définition et les  résultats de cette section, en particulier le numéro $5.)$ du théorème ci-dessous,   sont valables dans le cas bien plus général où $R$ est un anneau régulier pour le quel $2$ est inversible; dans ce cadre une extension de l'indice de Wall (défini sur un corps) en un indice défini sur un anneau arbitraire a été donnée par Kashiwara dans un travail non publié. Dans cette généralité les résultats de cette section sont d\^us à Barge-Lannes~\cite{ID}.

%%%%%%%%%%%%%%%%%%%%%%%%%%%%%%%%%%%%%%
\begin{proposition}\label{prop defindice}
	%%%%%%%%%%%%%%%%%%%%%%%%%%%%%%%%%%%%%%
	Fix three Lagrangians $\Lambda_0,\Lambda_1,\Lambda_2$. Choose two Lagrangian paths $\alpha_{01}$ and $ \alpha_{12}$, where $\alpha_{ij}$ joins $\Lambda_i$ to $\Lambda_j$. Then the class in $W(k)$ of the bilinear symmetric form 
	\[
	\mu_{BL}(\Lambda_0,\Lambda_1,\Lambda_2) =  S(\alpha_{01} \ast \alpha_{12}) \bot -S(\alpha_{01}) \bot -S(\alpha_{12})
	\]
	is independent of the choice of paths. It is by definition the \emph{Maslov index} of the three Lagrangians.
\end{proposition}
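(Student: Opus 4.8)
The plan is to reduce everything to one \emph{key observation}: if $\gamma$ and $\gamma'$ are two Lagrangian paths with the same starting Lagrangian $A$ and the same ending Lagrangian $B$, and we set $\omega=\gamma'\ast\gamma^{-1}$, a Lagrangian loop based at $A$, then in $W(k)$
\[
S(\gamma') = S(\omega)\bot S(\gamma).
\]
To prove this, apply Proposition~\ref{prop:relsdansMWR}(2) to the path $\gamma'$ and the loop $\omega$, obtaining $S(\gamma')=S(\omega)\bot S(\omega^{-1}\ast\gamma')$ in $W(k)$. Then rewrite $\omega^{-1}\ast\gamma' = \gamma\ast\bigl((\gamma')^{-1}\ast\gamma'\bigr)$; since $(\gamma')^{-1}\ast\gamma'$ is a loop based at $B$ of the form $\eta\ast\eta^{-1}$ with $\eta=(\gamma')^{-1}$, Proposition~\ref{prop actilacets} together with Proposition~\ref{prop:relsdansMWR}(1) give $S(\omega^{-1}\ast\gamma')=S\bigl((\gamma')^{-1}\ast\gamma'\bigr)\bot S(\gamma)=S(\gamma)$ in $W(k)$, which is the claim.

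Now take two choices of paths $\alpha_{01},\alpha_{12}$ and $\alpha_{01}',\alpha_{12}'$, and put $\omega_0=\alpha_{01}'\ast\alpha_{01}^{-1}$ (a loop at $\Lambda_0$) and $\omega_1=\alpha_{12}'\ast\alpha_{12}^{-1}$ (a loop at $\Lambda_1$). Applying the key observation to the pairs $(\alpha_{01},\alpha_{01}')$, $(\alpha_{12},\alpha_{12}')$ and $(\alpha_{01}\ast\alpha_{12},\,\alpha_{01}'\ast\alpha_{12}')$ yields, in $W(k)$,
\[
S(\alpha_{01}')=S(\omega_0)\bot S(\alpha_{01}),\qquad S(\alpha_{12}')=S(\omega_1)\bot S(\alpha_{12}),
\]
\[
S(\alpha_{01}'\ast\alpha_{12}')=S(\rho)\bot S(\alpha_{01}\ast\alpha_{12}),\qquad \rho:=(\alpha_{01}'\ast\alpha_{12}')\ast(\alpha_{01}\ast\alpha_{12})^{-1}.
\]
It remains to identify $S(\rho)$. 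Since $\rho=\alpha_{01}'\ast\omega_1\ast\alpha_{01}^{-1}$, with $\omega_1$ a loop based at $\Lambda_1$ sitting between the path $\alpha_{01}'$ (ending at $\Lambda_1$) and the path $\alpha_{01}^{-1}$ (starting at $\Lambda_1$), Proposition~\ref{prop actilacets} gives $S(\rho)=S(\omega_1)\bot S(\alpha_{01}'\ast\alpha_{01}^{-1})=S(\omega_1)\bot S(\omega_0)$ in $W(k)$. Substituting into $\mu_{BL}(\Lambda_0,\Lambda_1,\Lambda_2)=S(\alpha_{01}'\ast\alpha_{12}')\bot-S(\alpha_{01}')\bot-S(\alpha_{12}')$, each of $S(\omega_0)$ and $S(\omega_1)$ occurs exactly once with each sign and hence cancels in $W(k)$, leaving $S(\alpha_{01}\ast\alpha_{12})\bot-S(\alpha_{01})\bot-S(\alpha_{12})$, the value computed from the first pair of paths. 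This proves independence.

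I expect the only delicate point to be the bookkeeping: keeping straight which Lagrangian each auxiliary loop is based at ($\omega_0$ and $\rho$ at $\Lambda_0$, $\omega_1$ at $\Lambda_1$), grouping the concatenations so that Propositions~\ref{prop actilacets} and~\ref{prop:relsdansMWR} apply verbatim, and using repeatedly that a back‑and‑forth path $\eta\ast\eta^{-1}$ is Witt‑trivial. No new geometric input is needed beyond the Shortcut Lemma~\ref{lem:raccourci} and the consequences of it already established.
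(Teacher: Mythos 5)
Your proof is correct and follows essentially the same route as the paper: both rest on Proposition~\ref{prop actilacets} and Proposition~\ref{prop:relsdansMWR} (hence ultimately on the Shortcut Lemma) to absorb the comparison loops $\alpha'_{01}\ast\alpha_{01}^{-1}$ and $\alpha'_{12}\ast\alpha_{12}^{-1}$ and cancel them in $W(k)$. The only difference is organizational --- you change both paths simultaneously via your ``key observation,'' while the paper changes one path at a time and notes the general case follows --- which does not constitute a different argument.
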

%%%%%%%%%%%%%%%%%%%%%%%%%%%%%%%%%%%%%
\begin{proof} We only treat the case where we change the path  $\alpha_{01}$ by an alternate path  $\alpha'_{01}$, the other case is analogous, and from these two the general case follows immediately. 
	
	By construction we have a loop  $\alpha_{01} \ast {\alpha'}_{01}^{-1}$ based at $\Lambda_0$, and by Proposition~\ref{prop:relsdansMWR} applied to this loop and to the paths $\alpha_{01} \ast \alpha_{12}$ and $\alpha_{01}$ we get
	\begin{eqnarray*}
		\mu_{BL}(\Lambda_0,\Lambda_1,\Lambda_2) & = &
		S(\alpha_{01} \ast \alpha_{12}) \bot -S(\alpha_{01}) \bot -S(\alpha_{12}) \\
		& =  & S(\alpha'_{01} \ast \alpha_{01}^{-1}) \bot S(\alpha'_{01} \ast \alpha_{01}^{-1} \ast \alpha_{01} \ast \alpha_{12}) \bot -S(\alpha'_{01} \ast \alpha_{01}^{-1}) \\
		& &  \bot -S(\alpha'_{01} \ast \alpha_{01}^{-1} \ast \alpha_{01}) \bot -S(\alpha_{12}) \\
		& = & S(\alpha'_{01} \ast \alpha_{01}^{-1})\bot -S(\alpha'_{01} \ast \alpha_{01}^{-1}) \bot S(\alpha'_{01}   \ast \alpha_{12})\bot S(\alpha_{01} \ast \alpha_{01}^{-1})  \\ 
		& &\bot -S(\alpha_{01} \ast \alpha_{01}^{-1}) \bot -S(\alpha'_{01} ) \bot -S(\alpha_{12}) \textrm{ (Prop.~\ref{prop actilacets})}\\
		& = & S(\alpha'_{01}  \ast \alpha_{12}) \bot -S(\alpha'_{01} ) \bot -S(\alpha_{12})
	\end{eqnarray*}
where the last equality comes from the fact that if $\gamma$ is a loop, the $S(\gamma)$is invertible, and hence $S(\gamma)\bot-S(\gamma)$ is neutral. 
\end{proof}

We now check that $\mu_{BL}$ satisfies the characteristic properties of the Maslov index~\cite{CLM}, point $5)$ settles the multiplicative ambiguity.  

%%%%%%%%%%%%%%%%%%%%%%%%%%%%%%%%%%%%%
\begin{theorem}\label{thm propMaslov}
	%%%%%%%%%%%%%%%%%%%%%%%%%%%%%%%%%%%%
	The Maslov index  $\mu$ of three Lagrangians satisfies the following properties:
	\begin{enumerate}
		\item If two of the three Lagrangians coincide then  $\mu_{BL}(\Lambda_0,\Lambda_1,\Lambda_2) = 0$.
		\item For any $g \in Sp_{2g}(k)$,  $\mu_{BL}(g\Lambda_0,g\Lambda_1,g\Lambda_2) = \mu_{BL}(\Lambda_0,\Lambda_1,\Lambda_2).$
		\item The Maslov index is a  $2$-cocycle; if $\Lambda_0,\Lambda_1,\Lambda_2,\Lambda_3$ are $4$ Lagrangiens then
		\[
		\mu_{BL}(\Lambda_1,\Lambda_2,\Lambda_3) - \mu_{BL}(\Lambda_0,\Lambda_2,\Lambda_3) + \mu_{BL}(\Lambda_0,\Lambda_1,\Lambda_3) -\mu_{BL}(\Lambda_0,\Lambda_1,\Lambda_2) = 0
		\]
		\item If $\sigma \in \mathfrak{S}_3$ is a  permutation of the indices $0,1$ and $2$ with signature $\varepsilon(\sigma)$, then
		\[
		\mu_{BL}(\Lambda_{\sigma(0)},\Lambda_{\sigma(1)},\Lambda_{\sigma(2)})= \varepsilon(\sigma)\mu_{BL}(\Lambda_0,\Lambda_1,\Lambda_2).
		\]
		\item Let $\mu_{LK}$ denote the regularization of the Wall-Kashiwara index of $3$ Lagrangians, then $2\mu_{BL}=\mu_{LK}.$
	\end{enumerate}
	
\end{theorem}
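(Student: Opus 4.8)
The plan is to treat the five properties in order: the first four fall out of the path calculus of Section~\ref{sec:chemlagr} (the Shortcut Lemma~\ref{lem:raccourci}, the concatenation formulas of Propositions~\ref{prop actilacets} and~\ref{prop:relsdansMWR}, equivariance Lemma~\ref{lem:sylvis equiv}, and the independence-of-paths Proposition~\ref{prop defindice}), while the fifth is the only one with real content. For (2) I would just note that if $\alpha_{01},\alpha_{12}$ compute $\mu_{BL}(\Lambda_0,\Lambda_1,\Lambda_2)$ then $\phi_\ast\alpha_{01},\phi_\ast\alpha_{12}$ compute $\mu_{BL}(\phi\Lambda_0,\phi\Lambda_1,\phi\Lambda_2)$, and the corresponding Sylvester forms are isometric by Lemma~\ref{lem:sylvis equiv}, hence equal in $W(k)$. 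For (1), when two Lagrangians coincide I choose the paths so that the three forms in the definition telescope in $W(k)$: if $\Lambda_1=\Lambda_2$ take $\alpha_{12}=\gamma\ast\gamma^{-1}$ for an arbitrary path $\gamma$ issued from $\Lambda_1$, so $S(\alpha_{12})=0$ in $W(k)$ by Proposition~\ref{prop:relsdansMWR}(1) and $S(\alpha_{01}\ast\alpha_{12})=S(\alpha_{12})\bot S(\alpha_{01})$ by Proposition~\ref{prop actilacets}; the three contributions cancel. The cases $\Lambda_0=\Lambda_1$ and $\Lambda_0=\Lambda_2$ (where one takes $\alpha_{12}=\alpha_{01}^{-1}$, using also Proposition~\ref{prop:relsdansMWR}(3)) are identical.

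For (3) the decisive point is that Proposition~\ref{prop defindice} lets me compute the four indices with a single \emph{compatible} system of paths: fix $a,b,c$ joining $\Lambda_0\to\Lambda_1$, $\Lambda_1\to\Lambda_2$, $\Lambda_2\to\Lambda_3$, and use $a,b,c,a\ast b,b\ast c,a\ast b\ast c$ for the six pairs that occur. Expanding each $\mu_{BL}$, the alternating sum becomes a sum in $W(k)$ of the six classes $\overline{S(a)},\overline{S(b)},\overline{S(c)},\overline{S(ab)},\overline{S(bc)},\overline{S(abc)}$ in which every class appears exactly once with a plus sign and once with a minus sign, so it vanishes. For (4), since the transpositions $(0\,1)$ and $(1\,2)$ generate $\mathfrak S_3$, it suffices to check the sign change under each of these for an arbitrary triple, and then to propagate to all of $\mathfrak S_3$ by the substitution $M_i=\Lambda_{\tau(i)}$ and induction on word length; for $(0\,1)$ one takes $\alpha_{10}=\alpha_{01}^{-1}$, $\alpha_{02}=\alpha_{01}\ast\alpha_{12}$, so that $\alpha_{10}\ast\alpha_{02}$ has Sylvester class $\overline{S(\alpha_{12})}$ by Propositions~\ref{prop actilacets} and~\ref{prop:relsdansMWR}(1), and combining with $\overline{S(\alpha_{01}^{-1})}=-\overline{S(\alpha_{01})}$ one reads off $\mu_{BL}(\Lambda_1,\Lambda_0,\Lambda_2)=-\mu_{BL}(\Lambda_0,\Lambda_1,\Lambda_2)$; $(1\,2)$ is symmetric.

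For (5) I would first turn the definition of $\mu_{BL}$ into a closed formula attached to any auxiliary Lagrangian $M$ transverse to $\Lambda_0,\Lambda_1,\Lambda_2$. Taking $\alpha_{01}=(\Lambda_0,M,\Lambda_1)$ and $\alpha_{12}=(\Lambda_1,M,\Lambda_2)$, the end legs are the $1\times1$ Sylvester matrices $d_M(\Lambda_0,\Lambda_1)$ and $d_M(\Lambda_1,\Lambda_2)$, and applying the Shortcut Lemma~\ref{lem:raccourci} to the indices of $M$ and $\Lambda_2$ inside $\Lambda_0,M,\Lambda_1,M,\Lambda_2$ splits off a neutral summand with isotropic component $\Lambda_1^\ast$, leaving $S(\alpha_{01}\ast\alpha_{12})\equiv d_M(\Lambda_0,\Lambda_2)$ in $W(k)$; hence
\[
\mu_{BL}(\Lambda_0,\Lambda_1,\Lambda_2)=\overline{d_M(\Lambda_0,\Lambda_2)}-\overline{d_M(\Lambda_0,\Lambda_1)}-\overline{d_M(\Lambda_1,\Lambda_2)}\quad\text{in }W(k).
\]
Next I identify $\Lambda_0,\Lambda_1,\Lambda_2$ with the graphs, over $M$, of symmetric forms $q_0,q_1,q_2$ on $M^\ast$, so that $d_M(\Lambda_i,\Lambda_j)=\pm(q_j-q_i)$ and the Wall--Kashiwara form $(x_0,x_1,x_2)\mapsto\omega(x_0,x_1)+\omega(x_1,x_2)+\omega(x_2,x_0)$ on $\Lambda_0\oplus\Lambda_1\oplus\Lambda_2$ becomes the explicit block form on $(M^\ast)^3$ whose off-diagonal blocks are $q_1-q_0$, $q_2-q_1$, $q_0-q_2$. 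A block Gauss reduction of this $3g\times 3g$ matrix then peels off a neutral part and leaves a form isometric to $d_M(\Lambda_0,\Lambda_1)\bot d_M(\Lambda_1,\Lambda_2)$ corrected by $d_M(\Lambda_0,\Lambda_2)$ in a way that, after regularisation, reproduces $2\,\mu_{BL}(\Lambda_0,\Lambda_1,\Lambda_2)$; the formula then extends to all triples since both sides are well defined on all triples by construction.

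The hard part is exactly this last identification. The factor $2$ is produced there, and the computation is delicate because one must track on the nose the sign conventions built into the affine structure on $\mathcal L_{\pitchfork\Lambda}$ (the choice $\Lambda^\ast\oplus\Lambda$ over $\Lambda\oplus\Lambda^\ast$, with its attendant sign) and the normalisation of the Wall--Kashiwara index, so that the two regularised forms coincide literally and not merely up to a square class, a sign, or a Witt-trivial summand. Everything outside this step is bookkeeping with the Shortcut Lemma and the concatenation formulas.
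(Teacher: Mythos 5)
Your proposal is correct and, for parts (1)--(3) and (5), follows essentially the same route as the paper: telescoping choices of paths for (1), Lemma~\ref{lem:sylvis equiv} for (2), a single compatible system of paths and pairwise cancellation for (3), and for (5) the reduction, via a common transverse Lagrangian and the Shortcut Lemma, to the diagonal form $d_M(\Lambda_0,\Lambda_2)\bot -d_M(\Lambda_0,\Lambda_1)\bot -d_M(\Lambda_1,\Lambda_2)$. The one genuine divergence is (4): the paper deduces antisymmetry formally by specializing the cocycle identity (3) to degenerate quadruples such as $\Lambda_0,\Lambda_1,\Lambda_0,\Lambda_2$ and invoking (1), whereas you recompute $\mu_{BL}(\Lambda_1,\Lambda_0,\Lambda_2)$ directly with the reversed and concatenated paths; both work, but the paper's derivation is shorter and avoids re-entering the path calculus. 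Two small points you should tighten in (5): the existence of a Lagrangian $M$ simultaneously transverse to $\Lambda_0,\Lambda_1,\Lambda_2$ is not automatic and needs a citation (the paper uses \cite[Lemma 1.6]{MR1740881}, valid since $\operatorname{char}k\neq 2$ forces $|k|\geq 3$); and the final identification of the regularized diagonal form with half the Wall--Kashiwara form, which you correctly flag as the delicate sign-chasing step, is not carried out in the paper either --- it is delegated to \cite[Proposition 7.8.3]{MR3012162} --- so if you intend your block Gauss reduction to replace that reference you would actually have to execute it.
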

%%%%%%%%%%%%%%%%%%%%%%%%%%%%%%%%%%%%%
\begin{proof}
	\begin{enumerate}
		\item If $\Lambda_0 = \Lambda_1$, Then choose a Lagrangian $M$ transverse to $\Lambda_0$, this defines a path  $\alpha_{01} = \Lambda_0 M \Lambda_0$, choose a an arbitrary path $\alpha_{12}$. As the Sylvester matrix of the loop $\alpha_{01}$ is zero, by the Shortcut Lemma~\ref{lem:raccourci} and after regularization $\alpha_{01} \ast \alpha_{12} = \alpha_{12}$ and therefore
		\begin{eqnarray*}
			\mu_{BL}(\Lambda_0,\Lambda_1,\Lambda_2)  &  = & S(\alpha_{01} \ast \alpha_{12}) \bot -S(\alpha_{01}) \bot -S(\alpha_{12}) \\
			& = & S( \alpha_{12})  \bot -S(\alpha_{12}) \\
			& = & 0.
		\end{eqnarray*}
		when $\Lambda_1 = \Lambda_2$ the proof is as before. And if  $\Lambda_0 = \Lambda_2$, choose an arbitrary path $\alpha_{01}$ and choose as path $\alpha_{12}$ the path $\alpha_{01}^{-1}$. Proposition~\ref{prop actilacets}  implies then that the index computed with these paths is zero.
		\item This is an immediate consequence of the the equivariance properties of the Sylvester matrices, Lemma~\ref{lem:sylvis equiv}.
		\item We choose  $3$ paths $\alpha_{01}$, $\alpha_{12}$ and $\alpha_{23}$ and we denote by $D(\Lambda_0, \Lambda_1,\Lambda_2,\Lambda_3)$ the alternating sum.  then, 
		\begin{eqnarray*}
			D(\Lambda_0, \Lambda_1,\Lambda_2,\Lambda_3) & =&  S(\alpha_{12} \ast \alpha_{23}) \bot -S(\alpha_{12}) \bot -S(\alpha_{23})  \\
			& &  \bot-S(\alpha_{01} \ast \alpha_{12} \ast \alpha_{23}) \bot S(\alpha_{01} \ast \alpha_{12}) \bot S(\alpha_{23})\\
			& & \bot S(\alpha_{01} \ast \alpha_{12}\ast \alpha_{23}) \bot -S(\alpha_{01}) \bot -S(\alpha_{12}\ast \alpha_{23}) \\
			&&-S(\alpha_{01} \ast \alpha_{12}) \bot S(\alpha_{01}) \bot -S(\alpha_{12}) \\
		\end{eqnarray*}
	In this big orthogonal sum each bilinear form appears twice with opposite signs, and hence after regularization it is trivial.
		\item It is enough to show the statement for a transposition.  Applying the cocycle result to the four Lagrangiens $\Lambda_0,\Lambda_1,\Lambda_0,\Lambda_2$ we get
		\begin{align*}
			0 & =  \mu_{BL}(\Lambda_1,\Lambda_0,\Lambda_2) - \mu_{BL}(\Lambda_0,\Lambda_0,\Lambda_2) + \mu_{BL}(\Lambda_0,\Lambda_1,\Lambda_2) -\mu_{BL}(\Lambda_0,\Lambda_1,\Lambda_0) \\
			& =  \mu_{BL}(\Lambda_1,\Lambda_0,\Lambda_2) + \mu_{BL}(\Lambda_0,\Lambda_1,\Lambda_2).
		\end{align*}
		The same computation applied to  $\Lambda_0,\Lambda_1,\Lambda_2,\Lambda_0$ and $\Lambda_0,\Lambda_1,\Lambda_2,\Lambda_1$ gives the result for the other two transpositions.
		\item Since $k$ is a field with at least $3$ elements by~\cite[Lemma 1.6 ]{MR1740881}there exists a Lagrangien $\Lambda'$ which is simultaneously transverse to the  three Lagrangians $\Lambda_0,\Lambda_1,\Lambda_2$.
		
		We choose the following paths: $\alpha_{01} : \Lambda_0, \Lambda' \Lambda_1$,  and $\alpha_{12}:  \Lambda_1, \Lambda',\Lambda_2$. By definition, $\alpha_{01} \ast \alpha_{12} =  \Lambda_0, \Lambda',\Lambda_1, \Lambda',\Lambda_2$ and 
		\[
		\mu_{BL}(\Lambda_0,\Lambda_1,\Lambda_2) = S( \Lambda_0, \Lambda',\Lambda_1, \Lambda',\Lambda_2) \bot -S( \Lambda_0, \Lambda',\Lambda_1) \bot -S(\Lambda_1, \Lambda',\Lambda_2)
		\]
		By the Shortcut Lemma~\ref{lem:raccourci}, 
		\[
		S( \Lambda_0, \Lambda',\Lambda_1, \Lambda',\Lambda_2) = S( \Lambda_0, \Lambda',\Lambda_1, \Lambda',) \bot S( \Lambda_0, \Lambda',\Lambda_2)
		\]
		More over, as $\Lambda_0\pitchfork \Lambda'$, the bilinear form $S( \Lambda_0, \Lambda',\Lambda_1, \Lambda') $ is non-degenerate. It is a form with  support $\Lambda'\oplus\Lambda_1$, and with matrix:
		\[
		\left(
		\begin{matrix}
		d_{\Lambda_1,\Lambda_0} & \beta_{\Lambda_1,\Lambda'}  \\
		\beta_{\Lambda',\Lambda_1}  & d_{\Lambda',\Lambda'}
		\end{matrix}
		\right) =
		\left(
		\begin{matrix}
		d_{\Lambda_1,\Lambda_0} & \beta_{\Lambda_1,\Lambda'}  \\
		\beta_{\Lambda',\Lambda_1}  & 0
		\end{matrix}
		\right),
		\]
		As $\beta_{\Lambda_1,\Lambda'}$ is invertible, this is the matrix of a neural form.
		
		Finally:
		\[
		\mu_{BL}(\Lambda_0,\Lambda_1,\Lambda_2) = S( \Lambda_0, \Lambda',\Lambda_2) \bot -S( \Lambda_0, \Lambda',\Lambda_1) \bot -S(\Lambda_1, \Lambda',\Lambda_2)
		\]
	is the regularization of a bilinear form with support  $\Lambda'\oplus\Lambda'\oplus\Lambda'$ and with matrix
		\[
		\left(
		\begin{matrix}
		d_{\Lambda_2,\Lambda_0} & 0 & 0  \\
		0 & d_{\Lambda_1,\Lambda_0}  & 0 \\
		0 & 0& d_{\Lambda_1,\Lambda_2}
		\end{matrix}
		\right)
		\]
		For an explicit computation that shows that this is indeed half Kashiwara-Wall's index we refer the reader to~\cite[Proposition 7.8.3]{MR3012162}
	\end{enumerate}
\end{proof}

\section{Triviality mod $I^2$ of Maslov's cocycle}\label{sec:trivmodI2}
%%%%%%%%%%%%%%%%%%%%%%%%%%%%%%%%%%%%%
%%%%%%%%%%%%%%%%%%%%%%%%%%%%%%%%%%%%%

Consider our prefered Lagrangian $L$, although any other would equally suit our purposes. By point $(3)$ in Theorem~\ref{thm propMaslov}, the map $Sp_{2g}(k)\times Sp_{2g}(k) \rightarrow W(k)$ that to a pair $(A,B)$ associates $\mu_{BL}(L,AL,ABL)$ is a $2$-cocycle, and hence determines a group extension
\[
\xymatrix{
0 \ar[r] & W(k) \ar[r] \Gamma \ar[r] & Sp_{2g}(k) \ar[r] & 1.
}
\]
In this section we identify this extension of this class as the push-out of a canonical extension defined from the stabilizers of the Lagrangians $L$ and $L^\ast$, we show that the reduction mod $I^2$ of the extension splits, and compute explicitly a splitting. Such an explicit splitting is known for $k = \mathbb{R}$ by early work from Turaev~\cite{MR739088}.

%%%%%%%%%%%%%%%%%%%%%%%%%%%%%%%%%%%%%
\subsection{Sturm sequences and Sylvester matrices}\label{subsec:Sturmand Sylvester}
%%%%%%%%%%%%%%%%%%%%%%%%%%%%%%%%%%%%%

In Section~\ref{sec:rappels} we introduced the stabilizers $\mathcal{S}_L$ and $\mathcal{S}_{L^\ast}$. To distinguish the identity matrix in both subgroups we will write
\[
\underline{0}  = \left(
\begin{matrix}
	1 & 0 \\
	0& 1
\end{matrix}
\right) \in \mathcal{S}_{L} \textrm{ and }
\overline{0} =  \left(
\begin{matrix}
	1 & 0 \\
	0 & 1
\end{matrix}
\right) \in \mathcal{S}_{L^\ast}.
\]

The free product  $\mathcal{S}_L \ast \mathcal{S}_{L^\ast}$ comes together with an obvious evaluation morphism $E: \mathcal{S}_L \ast \mathcal{S}_{L^\ast} \rightarrow Sp_{2g}(k)$, and since $k$ is a field, the map $E$ is surjective. Indeed, it is well known that over a field the symplectic group is generated by elementary matrices of the shape $E(q)$ as above and by matrices of the form
\[
\left(
\begin{matrix}
	\alpha & 0 \\
	0& {}^t\alpha^{-1}
\end{matrix}
\right),
\] 
where $\alpha \in GL_g(k)$. But over a field  (cf. \cite[Thm. 66]{MR2001037})  every invertible matrix  is the product of two symmetric matrices, and if  $\alpha =p^{-1}q$ is such a factorization then the following two equalities in $Sp_{2g}(k)$
\[
\left(
\begin{matrix}
	1 & 0 \\
	q & 1
\end{matrix}
\right)
\left(
\begin{matrix}
	1 & -q^{-1} \\
	0 & 1
\end{matrix}
\right)
\left(
\begin{matrix}
	1 & 0 \\
	q & 1
\end{matrix} 
\right)
=
\left(
\begin{matrix}
	0 & -q^{-1} \\
	q & 0
\end{matrix}
\right) = m(q) \quad 
\]
and
\[
m(-p)m(q) = \left(
\begin{matrix}
	p^{-1}q & 0 \\
	0 & {}^t(p^{-1}q)^{-1}
\end{matrix}
\right),
\]
show that the elementary matrices suffice to generate the symplectic group.

In the symplectic space  $H(L)$, we have two preferred Lagrangians $L$ and $L^\ast$. For these we adopt the following convention: $L_n$ will denote $L$ if the integer $n$ is even and $L^\ast$ if it is odd. By definition, a reduced word in the free product $\mathcal{S}_L \ast \mathcal{S}_{L^\ast}$  is a sequence of symmetric linear maps $\underline{q}: (q_m,q_{m+1}, \dots,q_n)$ where $q_j: L_j \rightarrow L_{j+1}$. Such a sequence is said to be of \emph{type}  $(m,n)$ and is called a  \emph{Sturm sequence}. Barge-Lannes~\cite{MR2431916} associate to a Sturm sequence as before a Sylvester Matrix with support $L_m \oplus L_{m+1} \oplus \cdots \oplus L_n$ by the rule:
\[
S(\underline{q}) = \left(
\begin{matrix}
	(-1)^mq_m  & 1 & 0 & \cdots & 0  \\
	1 & (-1)^{m+1}q_{m+1} & 1 &   \ddots & \vdots  \\
	0 &  1 & \ddots & \ddots & 0\\
	\vdots  & \ddots  &\ddots & (-1) ^{n-1}q_{n-1}& 1 \\
	0 & \cdots & 0&  1 & (-1)^nq_n
\end{matrix}
\right)
\]

The sign that appears in this definition comes from our convention in defining the affine structure on the sets of Lagrangians transverse to $L_0$ or $L_1$, see~Section~\ref{subsec:comlagr}.

Let us now link the Sylvester matrix associated to a Lagrangian path and that associated to a Sturm sequence. 
%Recall that since $k$ is a field the action of the Symplectic group on the Lagrangians in $k^{2g}$ is transitive. As the action of the stabilizer of a fixed Lagrangian  $\Lambda$ on the set $\mathcal{L}_{\pitchfork \Lambda}$   is again transitive, the action of  $Sp_{2g}(k)$ is even transitive on the pairs formed by mutually transverse Lagrangians.

%%%%%%%%%%%%%%%%%%%%%%%%%%%%%%%%%%%%%
\begin{proposition}[Prop. C.1~\cite{MR2431916}]\label{prop:c1}
	%%%%%%%%%%%%%%%%%%%%%%%%%%%%%%%%%%%%%
	Let $m$ and $n$ be two integers, with  $m \leq n$ and let $\Lambda_{m-1}, \Lambda_m, \dots, \Lambda_n$ be a finite sequence of Lagrangians of the symplectic space $H(L)$ such that $\Lambda_{m-1} = L_{m-1}$. The following  conditions are equivalent:
	\begin{enumerate}
		\item[(i)] $\Lambda_{k-1} \pitchfork \Lambda_k$ for $m \leq k \leq n$
		\item[(ii)] There exists a Sturm sequence $(q_m,q_{m+1},\dots, q_n)$ on $L$ of type $(m,n)$ such that 
		\[
		\Lambda_k = E(q_m,q_{m+1},\dots,q_k) \cdot L_k \textrm{ for } m \leq k \leq n.
		\]
		
	\end{enumerate}
	Moreover, if condition  $(i)$ is satisfied, then the Sturm sequence that appears in  $(ii)$ is unique.
\end{proposition}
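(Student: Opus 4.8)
The strategy is to build the Sturm sequence term by term, using at each stage the affine structure on the set of Lagrangians transverse to a fixed one, as set up in Section~\ref{subsec:comlagr}. Recall that $\mathcal{S}_L$ (resp.\ $\mathcal{S}_{L^\ast}$) acts simply transitively on $\mathcal{L}_{\pitchfork L}$ (resp.\ on $\mathcal{L}_{\pitchfork L^\ast}$), so that a Lagrangian transverse to $L_{k-1}$ is uniquely the image of $L_k$ under a unique element $E(q_k)$ of the appropriate stabilizer, with $q_k\colon L_k\to L_{k+1}$ symmetric. This is exactly the mechanism that will produce the $q_k$'s, and it will also give uniqueness for free.

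\textbf{The implication $(ii)\Rightarrow(i)$} is the easy direction: if $\Lambda_k=E(q_m,\dots,q_k)\cdot L_k$ then $\Lambda_{k-1}=E(q_m,\dots,q_{k-1})\cdot L_{k-1}$ and $\Lambda_k=E(q_m,\dots,q_{k-1})\cdot\bigl(E(q_k)\cdot L_k\bigr)$. Since $E(q_k)\in\mathcal{S}_{L_{k-1}}$, the Lagrangian $E(q_k)\cdot L_k$ lies in $\mathcal{L}_{\pitchfork L_{k-1}}$ (because $L_k\pitchfork L_{k-1}$ and the stabilizer action preserves transversality), and then applying the symplectic element $E(q_m,\dots,q_{k-1})$ preserves transversality, so $\Lambda_{k-1}\pitchfork\Lambda_k$.

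\textbf{For $(i)\Rightarrow(ii)$ and uniqueness} I would argue by induction on $k$ from $k=m$ to $k=n$. At stage $k$ we have already produced $q_m,\dots,q_{k-1}$ (for $k=m$ the list is empty and $E$ of it is the identity) with $\Lambda_j=E(q_m,\dots,q_j)\cdot L_j$ for $j<k$, and in particular $\Lambda_{k-1}=E(q_m,\dots,q_{k-1})\cdot L_{k-1}$. Set $\phi_{k-1}=E(q_m,\dots,q_{k-1})\in Sp_{2g}(k)$. By hypothesis $\Lambda_{k-1}\pitchfork\Lambda_k$, and applying $\phi_{k-1}^{-1}$ (which preserves transversality) we get $L_{k-1}=\phi_{k-1}^{-1}\Lambda_{k-1}\pitchfork\phi_{k-1}^{-1}\Lambda_k$; so $\phi_{k-1}^{-1}\Lambda_k\in\mathcal{L}_{\pitchfork L_{k-1}}$. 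By the simple transitivity of the $\mathcal{S}_{L_{k-1}}$-action on $\mathcal{L}_{\pitchfork L_{k-1}}$, there is a \emph{unique} symmetric $q_k\colon L_k\to L_{k+1}$ with $E(q_k)\cdot L_k=\phi_{k-1}^{-1}\Lambda_k$, i.e.\ $\Lambda_k=\phi_{k-1}E(q_k)\cdot L_k=E(q_m,\dots,q_k)\cdot L_k$, which closes the induction. The uniqueness of the whole sequence follows because at every stage $q_k$ was the unique possible choice given $q_m,\dots,q_{k-1}$: any Sturm sequence realizing the $\Lambda_k$'s must, by the same simple transitivity applied inductively, have the same terms.

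\textbf{The main point to watch} is a bookkeeping one rather than a conceptual obstacle: one must be careful that the parity convention $L_n=L$ for $n$ even and $L_n=L^\ast$ for $n$ odd is consistently tracked, so that the stabilizer $\mathcal{S}_{L_{k-1}}$ alternates correctly between $\mathcal{S}_L$ and $\mathcal{S}_{L^\ast}$, and so that the domain/codomain of $q_k$ is $L_k\to L_{k+1}$ with the correct sign convention from Section~\ref{subsec:comlagr}. The hypothesis $\Lambda_{m-1}=L_{m-1}$ is what anchors the induction and makes $\phi_{m-1}$ the identity; without it the statement would only hold up to a global symplectic translate. Beyond this, everything reduces to the repeated use of ``simple transitive action $\Rightarrow$ unique translating element,'' so there is no genuinely hard step.
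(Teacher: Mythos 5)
Your proposal is correct and follows essentially the same route as the paper: the direction $(ii)\Rightarrow(i)$ by transversality-preservation, and $(i)\Rightarrow(ii)$ together with uniqueness by induction on $k$, pulling $\Lambda_k$ back by $E(q_m,\dots,q_{k-1})^{-1}$ and invoking the simply transitive action of $\mathcal{S}_{L_{k-1}}$ on $\mathcal{L}_{\pitchfork L_{k-1}}$. Your treatment of the easy direction and of the parity bookkeeping is in fact slightly more explicit than the paper's, but there is no substantive difference.
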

%%%%%%%%%%%%%%%%%%%%%%%%%%%%%%%%%%%%%
\begin{proof} The implication $ii)\Rightarrow i)$ is clear. 
	Conversely, we consider a Lagrangian path $\alpha: L_{m-1},\Lambda_m, \dots,\Lambda_n,\Lambda_{n}$. By definition $\Lambda_m \in \mathcal{L}_{\pitchfork L_{m-1}}$ and hence  there exists a unique $q_{m-1} \in \mathcal{S}_{L_{m-1}}$ such that $E(q_{m})L_m =\Lambda_m$. Assume by induction that we have constructed a unique  Sturm sequence $(q_m, \dots, q_k)$ for $k \geq n-1$ such that for all $s \leq k$ $\Lambda_s = E(q_m,q_{m-1}, \dots,q_s)L_s$. Let  $\Lambda'_{s+1} = E(q_m,q_{m-1}, \dots,q_s)^{-1} \Lambda_{s+1}$. By construction $\Lambda'_{s+1} \pitchfork L_s$ and hence, there exists a unique $q_{s+1} \in \mathcal{S}_{L_s}$ such that $\Lambda'_{s+1} = E(q_{s+1} L_{s+1})$, and by construction 
	\begin{align*}
		\Lambda_{s+1} & = E(q_m,q_{m-1}, \dots,q_s)\Lambda'_{s+1} \\
		& = E(q_m,q_{m-1}, \dots,q_s) E(q_{s+1})L_{s+1} \\
		& = E(q_m,q_{m-1}, \dots,q_s,q_{s+1})L_{s+1}
	\end{align*}
\end{proof}

Let $K = \ker (E: \mathcal{S}_L \ast \mathcal{S}_{L^\ast} \rightarrow  Sp_{2g}(k))$. By definition we have a   a short exact sequence of groups
\[
\xymatrix{
1 \ar[r] & K \ar[r] & \mathcal{S}_L \ast \mathcal{S}_{L^\ast} \ar[r] & Sp_{2g}(k) \ar[r] & 1.
}
\]

%%%%%%%%%%%%%%%%%%%%%%%%%%%%%%%%%%%%
\subsection{Four natural functions on $K$}\label{subsec:4fonct}
%%%%%%%%%%%%%%%%%%%%%%%%%%%%%%%%%%%%

Let $(m,n)$ denote one of the $4$ couples $(0,0), (0,1), (1,0), (1,1)$. Up to possibly adding to a word  $w \in  \mathcal{S}_L \ast \mathcal{S}_{L^\ast}$  either $\overline{0}$ or $\underline{0}$ at the beginning or at the end, we may assume that $w$ is of the type $(m,n)$, i.e it can be identified with a Sturm sequence of type $(m,n)$. Let us define:
\[
\begin{array}{rcl}
f_{m,n}: \mathcal{S}_L \ast \mathcal{S}_{L^\ast} & \longrightarrow & W(k) \\
w & \longmapsto & S(w), \textrm{ where  $w$ is of type } (m,n).
\end{array}
\]

%%%%%%%%%%%%%%%%%%%%%%%%%%%%%%%%%%%%
\begin{proposition}\label{prop:4fonctdef}
%%%%%%%%%%%%%%%%%%%%%%%%%%%%%%%%%%%%
The $4$ functions $f_{00}, f_{01}, f_{10}$ and $f_{11}$ are all well-defined.
\end{proposition}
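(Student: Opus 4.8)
The only thing that needs checking is that the four rules $w \mapsto S(w)$ are consistent with respect to the (possibly necessary) padding of $w$ by $\underline 0$ or $\overline 0$ at either end, so that $f_{m,n}$ does not depend on how one realizes $w$ as a Sturm sequence of the prescribed type $(m,n)$. Concretely, a reduced word $w$ determines a Sturm sequence $(q_s, \ldots, q_t)$ only after a choice of starting parity; changing the choice amounts to prepending and/or appending a zero form, and one must see that the Witt class of the associated Sylvester matrix is unchanged. So the first step is to reduce the statement to a single elementary claim: \emph{for any Sturm sequence $\underline q$, the Sylvester matrices $S(\underline q)$, $S(\underline 0, \underline q)$ (or $S(\overline 0, \underline q)$, whichever has the right parity) and $S(\underline q, \underline 0)$ etc. all have the same class in $W(k)$.} Since padding at the two ends is independent, it suffices to treat padding at one end, say prepending a zero.

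**The key step.** Prepending $\overline 0$ or $\underline 0$ to $\underline q = (q_m,\ldots,q_n)$ produces a Sturm sequence of type $(m-1,n)$ whose Sylvester matrix, by the formula in Section~\ref{subsec:Sturmand Sylvester}, is
\[
S(\text{padded}) = \left(
\begin{matrix}
(-1)^{m-1}\cdot 0 & 1 & 0 \cdots \\
1 & (-1)^m q_m & 1 \cdots \\
0 & 1 & \ddots
\end{matrix}
\right)
=\left(
\begin{matrix}
0 & v^\ast \\
v & S(\underline q)
\end{matrix}
\right),
\]
where $v^\ast = (1,0,\ldots,0)$. The first basis vector of the support $L_{m-1}$ together with the rest of the support spans a hyperbolic plane split off orthogonally: the $2\times 2$ block $\left(\begin{smallmatrix}0 & 1 \\ 1 & 0\end{smallmatrix}\right)$ (in the $L_{m-1}$–first-coordinate pair) is non-degenerate, hence by orthogonal decomposition $S(\text{padded}) \cong \left(\begin{smallmatrix}0 & 1\\ 1 & 0\end{smallmatrix}\right)\bot (\text{complement})$; a direct computation of the orthogonal complement recovers $S(\underline q)$ exactly, and the hyperbolic plane is neutral, so the two classes agree in $W(k)$. (Equivalently, one may invoke Proposition~\ref{prop:c1} to realize the padded sequence as a Lagrangian path obtained by inserting $L_{m-1}$ in front, and apply the Shortcut Lemma~\ref{lem:raccourci} to the transverse pair $L_{m-1} \pitchfork \Lambda_m$, which splits off $S(L_{m-1},\Lambda_m,L_{m-1})$, a zero form on $\Lambda_m^\ast$, leaving precisely $S(\underline q)$; this is the cleaner route and connects the Sturm-sequence picture with the Lagrangian-path picture already developed.)

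**Assembling.** Given the one-sided claim, independence under the full padding operation follows by applying it at the left end and then at the right end. Since any two presentations of $w$ as a Sturm sequence of type $(m,n)$ differ precisely by such padding, $f_{m,n}(w) := S(w)\in W(k)$ is well-defined for each of the four parities $(0,0),(0,1),(1,0),(1,1)$. The main (and only) obstacle is bookkeeping: one must track the parity signs $(-1)^j$ correctly so that the padded entry really is $\pm 0 = 0$ and so that the recovered complement is genuinely $S(\underline q)$ with the original sign pattern rather than a sign-twisted variant; the Shortcut-Lemma route makes this transparent because it never touches the signs, only the transversality relations.
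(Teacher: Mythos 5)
Your key step is false, and the reduction it rests on is also not the right one. Prepending a \emph{single} zero to $\underline{q}=(q_m,\dots,q_n)$ changes the type (it shifts the starting parity), so it is not an ambiguity internal to a fixed $f_{m,n}$ in the first place; worse, it changes the Witt class. Carrying out your own block computation correctly: the nondegenerate corner block $\left(\begin{smallmatrix}0&1\\1&(-1)^m q_m\end{smallmatrix}\right)$ splits off, and because the coupling row $v^\ast=(1,0,\dots,0)$ ties the new coordinate to the \emph{first original} one, the orthogonal complement (Schur complement) is the Sylvester matrix of the truncated sequence $(q_{m+1},\dots,q_n)$, not $S(\underline{q})$. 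A minimal counterexample: $g=1$, $\underline{q}=(q_0)$ with $q_0=\langle 1\rangle$ gives $S(\underline{q})=\langle 1\rangle$, while $S(\underline{0},\underline{q})=\left(\begin{smallmatrix}0&1\\1&1\end{smallmatrix}\right)$ is neutral. Indeed, if single-zero padding preserved the class, all four functions $f_{00},f_{01},f_{10},f_{11}$ would coincide everywhere, whereas they coincide only on $K$ (Remark~\ref{rem:add0}). The type-preserving end ambiguity is padding by the \emph{pair} $\underline{0}\,\overline{0}$ (resp.\ $\overline{0}\,\underline{0}$), and for that your computation does work: the split-off block is the hyperbolic $\left(\begin{smallmatrix}0&1\\1&0\end{smallmatrix}\right)$ and the complement is exactly $S(\underline{q})$ --- this is the paper's first case. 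Your parenthetical Shortcut-Lemma route is also off: prepending a zero inserts $L_{m-2}$ (not $L_{m-1}$) in front of the associated Lagrangian path, and the pair $L_{m-1}\pitchfork\Lambda_m$ you propose to shortcut is an \emph{adjacent} pair of that path, for which Lemma~\ref{lem:raccourci} is vacuous; there is no sub-path $L_{m-1},\Lambda_m,L_{m-1}$ to split off.

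Second, your framing ``the only thing that needs checking is end padding'' misses half of what well-definedness requires. The functions are defined on the free product, so a type-$(m,n)$ representative of $w$ need not be a reduced word: it may contain an interior sub-word $a\,\overline{0}\,b$ with $a,b\in\mathcal{S}_L$ (or $a\,\underline{0}\,b$ with $a,b\in\mathcal{S}_{L^\ast}$), which represents the same group element as the single letter $a+b$. One must show the Witt class of the Sylvester matrix is unchanged under this merge; that is the second (and more delicate) case of the paper's proof, handled by applying the Shortcut Lemma to a suitable \emph{non-adjacent} transverse pair of Lagrangians in the associated path, and it is genuinely needed later, where non-reduced representatives with interior zeros are manipulated freely (e.g.\ Proposition~\ref{prop:ssmotk} and Lemma~\ref{lem:fsrukcentr}). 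So as it stands the proposal both relies on a false identity for the end case and omits the interior case entirely; repairing it means proving invariance under pair-padding at the ends and under the interior merge $a\,\overline{0}\,b\leftrightarrow a+b$, which is precisely the paper's argument.
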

%%%%%%%%%%%%%%%%%%%%%%%%%%%%%%%%%%%%
\begin{proof} We only treat the case of the function $f_{00}$, the $3$ other cases can be treated in a similar way. It is enough to show that the value of   $f_{00}$ on $w$ is independent from the choice of the representative of type $00$ of $w$. There are only two types of ambiguity in the choice of the representative:
\begin{enumerate}
\item Given a representative  of type $00$ we may add to it the word  $\underline{0}\overline{0}$ at the beginning (resp. $\overline{0}\underline{0}$  at the end).
\item Inside a representative of type $00$ we may find a sub-word of the form
$
a \overline{0}b
$
with $a,b \in \mathcal{S}_{L}$ (resp. $a\underline{0}b$ with $a,b \in \mathcal{S}_{L^\ast}$ )
\end{enumerate} 

Case 1) Let $w: w_2,w_3,\dots, w_{2n}$ be a Sturm sequence of type $00$. We will only show the equality $f_{00}(\underline{0}\overline{0}w) = f_{00}(w)$, the other case is similar. Observe that 
\[
S(\underline{0}\overline{0}) = \left(\begin{matrix}
0 & 1 \\1 & 0
\end{matrix}\right)
\] 
is a non-degenerate and neutral form. The Lagrangian path  associated to $\underline{0}\overline{0}w$ is:
\[
L_0,L_1,E(\underline{0})L_0,E(\underline{0}\overline{0})L_1,E(\underline{0}\overline{0}w_2)L_0, \cdots, E(\underline{0}\overline{0}w)L_1.
\]
Since 
\[
L_1 =\left(\begin{matrix}
1 & 0 \\
w_2 & 0
\end{matrix}\right)L_1 \pitchfork \left(\begin{matrix}
1 & 0 \\
w_2 & 0
\end{matrix}\right)L_0 = E(\underline{0}\overline{0}w_2)L_0 
\]
the shorteneing Lemma~\ref{lem:raccourci}, tells us that
\[
S(\underline{0}\overline{0}w) = S(\alpha) \bot S(\beta), \quad (**)
\]
where $\alpha$ is the Lagrangian path
\[
L_1,E(\underline{0})L_0,E(\underline{0}\overline{0})L_1,E(\underline{0}\overline{0}w_2)L_0
\]
and $\beta$ is the Lagrangian path
\[
L_0,L_1,E(\underline{0}\overline{0}w_2)L_0, \cdots, E(\underline{0}\overline{0}w)L_1.
\]

Since $E(\underline{0})=E(\underline{0}\overline{0})= Id$ the path $\alpha$ is simply
\[
L_1,L_0,E(\overline{0})L_1,E(\overline{0}w_2)L_0
\]
which is associated to the Sturm sequence  $(\overline{0},w_2)$  and whose Sylvester matrix is neutral
\[
S(\alpha)= \left(\begin{matrix}
0 & 1 \\
1 & w_2
\end{matrix}\right).
\]

In $\beta$ one recognizes the Lagrangian path associated to the sequence $w$ and equality $(\ast\ast)$ in  $W(k)$ states that
\[
S(\underline{0}\overline{0}w_2\dots w_2n) = S(w_2 \dots w_{2n}).
\]

Case 2) Once more we only treat the first of the two sub-cases. The Sturm sequence associated to the representative of  type $00$ is:
\[
q_0, q_1, \dots, q_{2r}, \overline{0}, q_{2r+2}, \dots, q_{2n},
\]
and its associated Lagrangian path is
\[
L_0,L_1,E(q_0)L_0,\dots,E(q_0\cdots q_{2r-1})L_1, E(q_0\cdots q_{2r})L_0, E(q_0\cdots q_{2r}\overline{0})L_1,
\]
\[
E(q_0\cdots q_{2r}\overline{0}q_{2r+2})L_0,\dots,E(q_0\cdots q_{2r} \overline{0} q_{2r+2} \cdots q_{2n})L_0.
\]

Simplifying the evaluations, taking into account that $E$ is a group homomorphism and that $E(\overline{0})=Id$, this path is exactly
\[
L_0,L_1,E(q_0)L_0,\dots,{ \bf E(q_0\cdots q_{2r-1})L_1}, E(q_0\cdots q_{2r})L_0, E(q_0\cdots q_{2r})L_1,
\]
\[
{\bf E(q_0\cdots (q_{2r}+q_{2r+2}))L_0},\dots,E(q_0\cdots (q_{2r} + q_{2r+2}) \cdots q_{2n})L_0.\footnote{If $r=0$, by convention $E(q_0\cdots q_{2r-1})=Id$.}
\]

Now, the two Lagrangians in boldface characters are mutually transverse, for 
\[
L_1 = \left(\begin{matrix}
1 & 0 \\
q_{2r}+q_{2r+2} & 1
\end{matrix}\right)L_1 = E(q_{2r}+q_{2r+2})L_1 \pitchfork E(q_{2r}+q_{2r+2})L_0
\]
and hence $E(q_0\cdots q_{2r-1})L_1 \pitchfork E(q_0\cdots (q_{2r}+ q_{2r+2}))L_0$.

We  apply the Shortcut Lemma~\ref{lem:raccourci} to the two Lagrangians and observe that the Sylvester matrix of the initial sequence is therefore isometric to the direct sum of the two Sylvester matrices of the Lagrangian path
\[
L_0,L_1,E(q_0)L_0,\dots, E(q_0\cdots q_{2r-1})L_1,
\]
\[
 E(q_0\cdots (q_{2r}+q_{2r+2}))L_0,\dots,E(q_0\cdots (q_{2r} + q_{2r+2}) \cdots q_{2n})L_0,
\]
that is associated to the Sturm sequence
\[q_0,q_1, \dots,q_{2r}+q_{2r+2},\dots,q_{2n},\]
and of the path
\[
 E(q_0\cdots q_{2r-1})L_1, E(q_0\cdots q_{2r})L_0, E(q_0\cdots q_{2r})L_1,
 E(q_0\cdots (q_{2r}+q_{2r+2}))L_0.
\]

Since $E(q_{2r})L_1 = L_1$, this last path is the image by  $E(q_0\cdots q_{2r})$ of the path
\[
L_1, L_0, L_1,
 E(q_{2r+2})L_0.
\]
whose Sylvester matrix is
\[
\left(\begin{matrix}
0 & 1 \\
1 & q_{2r+2}
\end{matrix}\right)
\]
and is clearly neutral.

\end{proof}

Given a sub-word $k$ of $w$ which represents an element in the kernel $K$, the function $f_{00}$  behaves almost as a group homomorphism would do.

% % % % % % % % % % % % % % % % % % % % % % % % % % % % % % %
\begin{proposition}\label{prop:ssmotk}
% % % % % % % % % % % % % % % % % % % % % % % % % % % % % % %
Let $\underline{k} = k_0,\dots, k_{2k+1}  \in K$ be a word of type  $01$ and let  
\[
\underline{w} = q_0,\dots,q_{2r-1},k_0,\dots, k_{2\ell+1},q_{2\ell+2},\dots,q_{2n}
\]
be an arbitrary word in $\mathcal{S}_L\ast \mathcal{S}_{L^\ast}$ of type $00$ that contains $\underline{k}$ as a sub-word. Then:
\[
f_{00}(\underline{w}) = f_{01}(\underline{k}) + f_{00}(q_0, \dots, q_{2r-1},q_{2\ell+2}, \dots, q_{2n}).
\] 
\end{proposition}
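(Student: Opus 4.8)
The plan is to reduce the statement to a single application of the Shortcut Lemma~\ref{lem:raccourci}, after identifying the correct pair of transverse Lagrangians in the Lagrangian path attached to $\underline{w}$. First I would write down the Lagrangian path $\alpha$ associated to the type-$00$ word $\underline{w}$ via Proposition~\ref{prop:c1}: it has the form
\[
L_0,L_1,E(q_0)L_0,\dots,E(q_0\cdots q_{2r-1})L_1,\ E(q_0\cdots q_{2r-1}k_0)L_2,\dots
\]
continuing through the segment coming from $\underline{k}$ and then through the tail $q_{2\ell+2},\dots,q_{2n}$. The key observation is that since $\underline{k}\in K$, we have $E(k_0\cdots k_{2\ell+1}) = \mathrm{Id}$, so the Lagrangian sitting just after the $\underline{k}$-segment, namely $E(q_0\cdots q_{2r-1}k_0\cdots k_{2\ell+1})L_1 = E(q_0\cdots q_{2r-1})L_1$, is \emph{equal} to the Lagrangian $E(q_0\cdots q_{2r-1})L_1$ sitting just before the $\underline{k}$-segment. (One must be careful with the parity bookkeeping: $\underline{k}$ has type $01$, which is exactly what makes the two boundary Lagrangians of the $\underline{k}$-segment land on the same $L_1$-copy; this is where the hypothesis on the type of $\underline{k}$ is used.) In particular these two Lagrangians are transverse — being equal, they are both transverse to the next Lagrangian, but more to the point we can insert the transversality needed to cut.

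Next I would apply the Shortcut Lemma to the indices bracketing the $\underline{k}$-segment. Concretely, in the path $\alpha$ the Lagrangian $\Lambda_i = E(q_0\cdots q_{2r-1})L_1$ appearing right before the $k_0$-step and the Lagrangian $\Lambda_j = E(q_0\cdots q_{2r-1}k_0\cdots k_{2\ell+1})L_1 = \Lambda_i$ appearing right after the $k_{2\ell+1}$-step coincide, hence are transverse to the Lagrangian $E(q_0\cdots q_{2r-1}k_0\cdots k_{2\ell+1})L_0 = E(q_0\cdots q_{2r-1})L_0$; applying the Shortcut Lemma to a pair realizing this transversality — exactly as in the proof of case 2 of Proposition~\ref{prop:4fonctdef}, where the trick was to replace a genuine $\pitchfork$ by one created through the vanishing of $E$ on a kernel element — splits $S(\alpha)$ as
\[
S(\alpha) \ \cong\ S(\text{sub-path across }\underline{k})\ \bot\ S(\text{shortened path}).
\]
The sub-path across $\underline{k}$ is, by equivariance of Sylvester matrices (Lemma~\ref{lem:sylvis equiv}) applied to the symplectic element $E(q_0\cdots q_{2r-1})$, isometric to the Lagrangian path associated to $\underline{k}$ itself, whose Sylvester matrix is by definition $S(\underline{k})$, i.e. its Witt class is $f_{01}(\underline{k})$. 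The shortened path is precisely the Lagrangian path associated to the type-$00$ word $q_0,\dots,q_{2r-1},q_{2\ell+2},\dots,q_{2n}$, so its Witt class is $f_{00}(q_0,\dots,q_{2r-1},q_{2\ell+2},\dots,q_{2n})$.

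Passing to classes in $W(k)$ (regularizing) turns the orthogonal sum into a sum and yields
\[
f_{00}(\underline{w}) = f_{01}(\underline{k}) + f_{00}(q_0,\dots,q_{2r-1},q_{2\ell+2},\dots,q_{2n}),
\]
as claimed. I expect the main obstacle to be purely bookkeeping: checking that the parities line up so that the $\underline{k}$-segment genuinely has its two endpoints on the same copy of $L_1$ (so that $E(\underline{k})=\mathrm{Id}$ forces endpoint equality rather than just a symplectic translate), and that after applying equivariance the Sylvester matrix of the extracted sub-path is literally $S(\underline{k})$ with the signs from the Sturm-to-path dictionary of Section~\ref{subsec:comlagr} matching up. There is also a minor boundary case, $r=0$ (empty prefix), to handle with the convention $E(q_0\cdots q_{2r-1})=\mathrm{Id}$, and symmetrically the case where the tail is empty; both are dispatched exactly as in Proposition~\ref{prop:4fonctdef}.
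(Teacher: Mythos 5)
Your overall strategy is the paper's: write out the Lagrangian path of $\underline{w}$, use $E(\underline{k})=\mathrm{Id}$ to create a transversality bridging the $\underline{k}$-segment, cut with the Shortcut Lemma, and identify the two pieces. However, the execution of the central step has a genuine gap. The Shortcut Lemma~\ref{lem:raccourci} needs two \emph{vertices of the path} that are transverse, and the two Lagrangians you explicitly bracket the segment with, $\Lambda_i=E(q_0\cdots q_{2r-1})L_1$ and $\Lambda_j=E(q_0\cdots q_{2r-1}k_0\cdots k_{2\ell+1})L_1$, are \emph{equal}, hence never transverse, so the lemma cannot be applied to that pair; and the Lagrangian you then invoke, $E(q_0\cdots q_{2r-1}k_0\cdots k_{2\ell+1})L_0=E(q_0\cdots q_{2r-1})L_0$, is not a vertex of the path at all (the path has $E(q_0\cdots q_{2r-1}k_0\cdots k_{2\ell+1}q_{2\ell+2})L_0$ there). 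The correct cut, which is what the paper does, is between $E(q_0\cdots q_{2r-1})L_1$ (last vertex of the prefix) and $E(q_0\cdots q_{2r-1}\underline{k}\,q_{2\ell+2})L_0$ (first vertex of the tail); these are transverse precisely because the latter is adjacent in the path to $E(q_0\cdots q_{2r-1}\underline{k})L_1=E(q_0\cdots q_{2r-1})L_1$. With your stated pair the "shortened path" would even contain two equal consecutive Lagrangians and so would not be a Lagrangian path.

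A second, related gap: once the cut is made at the correct pair, the extracted piece is, after equivariance (Lemma~\ref{lem:sylvis equiv}), the loop $L_1,E(k_0)L_0,\dots,E(k_0\cdots k_{2\ell+1})L_1=L_1$ \emph{with the extra vertex} $E(q_{2\ell+2})L_0$ appended; it is not isometric to the Lagrangian path of $\underline{k}$ (the supports have different dimensions), so "its Sylvester matrix is by definition $S(\underline{k})$" is not literally true. One still gets $f_{01}(\underline{k})$ in $W(k)$, but this requires the extra observation that appending (or prepending) a Lagrangian transverse to the base point of a loop does not change the Witt class — that is Corollary~\ref{cor:les4chemins}, which the paper invokes at exactly this point and which your argument omits. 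Both defects are repairable, but they sit at the heart of the proof rather than in the bookkeeping you flagged.
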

% % % % % % % % % % % % % % % % % % % % % % % % % % % % % % %
\begin{proof}
	We write down the Lagrangian path associated to the Sturm sequence $\underline{w}$:
	\[
	L_0,L_1,E(q_0)L_0, \dots ,E(q_0 \cdots q_{2r-1})L_1, 
	\]
	\[
	E(q_0 \cdots q_{2r-1}k_0)L_0,\dots , E(q_0\cdots q_{2r-1}k_0 \cdots k_{2\ell +1})L_1,
	\]
	\[
	E(q_0\dots q_{2r-1}k_0 \cdots k_{2\ell +1}q_{2\ell +2})L_0, \dots, E(\underline{w})L_0.
	\]
Since the sequence $\underline{k}$ represents an element in $K$, the kernel of the evaluation map, we know that $E(\underline{k}) = Id$. In the Lagrangian path above, the last  Lagrangian on the second line and the first Lagrangian in the third line are by definition transverse:
\[
E(q_0\dots q_{2r-1}k_0 \cdots k_{2\ell +1})L_1
\pitchfork
E(q_0\dots q_{2r-1}k_0 \cdots k_{2\ell +1}q_{2\ell +2})L_0
\]

But $E(q_0\cdots q_{2r-1}k_0 \cdots k_{2\ell +1}) = E(q_0\cdots q_{2r-1})$, hence
\[
E(q_0\cdots q_{2r-1})L_1
\pitchfork
E(q_0\cdots q_{2r-1}k_0 \cdots k_{2\ell +1}q_{2\ell +2})L_0
\]

We can now apply the Shortcut Lemma~\ref{lem:raccourci} to the last Lagrangian of the first line and to the first Lagrangian on the third line; this tells us that  $f_{00}(\underline{w})$ is the sum of the classes of the Sylvester matrices associated to the two following Lagrangian paths. On the one hand
\[
L_0,L_1,E(q_0)L_0, \dots ,E(q_0 \cdots q_{2r-1})L_1, 
\]
\[
E(q_0\dots q_{2r-1}k_0 \cdots k_{2\ell +1}q_{2\ell +2})L_0, \dots, E(\underline{w})L_0.
\]
Since $E(\underline{k})=Id$, this is nothing else than the Lagrangian path
\[
L_0,L_1,E(q_0)L_0, \dots ,E(q_0 \cdots q_{2r-1})L_1, 
\]
\[
E(q_0\cdots q_{2r-1}q_{2\ell +2})L_0, \dots, E(q_0\cdots q_{2r-1}q_{2\ell +2}\cdots q_{2n})L_0
\]
whose class is by definition $f_{00}(q_0,\dots ,q_{2r-1},q_{2\ell +2},\dots, q_{2n})$,
and on the other hand the path
\[
E(q_0 \cdots q_{2r-1})L_1, 
E(q_0 \cdots q_{2r-1}k_0)L_0,\dots,
\]
\[
 E(q_0\cdots q_{2r-1}k_0 \cdots k_{2\ell +1})L_1,
E(q_0\dots q_{2r-1}k_0 \cdots k_{2\ell +1}q_{2\ell +2})L_0.
\]

By definition of the action of the symplectic group on Lagrangian paths, this path is the image by $E(q_0 \cdots q_{2r-1})$ of the Lagrangian path
\[
L_1, E(k_0)L_0,\dots,E(k_0 \cdots k_{2\ell +1})L_1,
E(k_0 \cdots k_{2\ell +1}q_{2\ell +2})L_0.
\]

Here again, as $E(k_0 \cdots k_{2\ell +1})=Id$, $E(k_0 \cdots k_{2\ell +1})L_1 = L_1$ and we recognize a Lagrangian loop with an extra term to its right:  $E(k_0 \cdots k_{2\ell +1}q_{2\ell +2})L_0 = E(q_{2\ell +2})L_0$. By Corollary~\ref{cor:les4chemins}, the Witt class of the Sylvester matrix associated to this path coincides with that of the Sylvester matrix associated to the path
\[
L_0,L_1, E(k_0)L_0,\dots,E(k_0 \cdots k_{2\ell +1})L_1
\]
which by definition is $f_{01}(\underline{k})$.

\end{proof}

\begin{remark}\label{rem:add0}
 If $\underline{k} \in K$ is a word of type $01$, by applying Corollary~\ref{cor:les4chemins} as in the last part of the above argument one can show that:
\[
f_{00}(\underline{k} \underline{0}) = f_{01}(\underline{k}) + f_{00}(\underline{0}) = f_{01}(\underline{k}),
\]
because in  $W(k)$, $f_{00}(\underline{0}) = 0$. 

In particular all  $4$ functions $f_{00}$, $f_{11}$, $f_{01}$ and $f_{01}$ coincide on $K$.
\end{remark}
 
 This leads us the key observation:
% % % % % % % % % % % % % % % % % % % % % % % % % % % % %
\begin{lemma}\label{lem:fsrukcentr}
% % % % % % % % % % % % % % % % % % % % % % % % % % % % %
The function $f_{01}:K \rightarrow W(k)$ is a group homomorphism invariant under the conjugation action of  $\mathcal{S}_L \ast\mathcal{S}_{L^\ast}$; it takes values in $I^2$, the square of the fundamental ideal.
\end{lemma}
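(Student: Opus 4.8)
The plan is to prove the three assertions in turn, all by feeding the calculus of Lagrangian paths developed above into the functions $f_{00},f_{01}$.

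\emph{That $f_{01}|_K$ is a homomorphism.} Given $\underline{k}_1,\underline{k}_2\in K$, first normalise both to reduced words of type $01$ by appending copies of $\underline{0}$ or $\overline{0}$ at the ends; by Proposition~\ref{prop:4fonctdef} and Remark~\ref{rem:add0} this changes no value of $f_{01}$. Then $\underline{k}_1\underline{k}_2$ is again a reduced word of type $01$ lying in $K$, and $\underline{w}:=\underline{k}_1\underline{k}_2\,\underline{0}$ is a word of type $00$ containing, in the position required by Proposition~\ref{prop:ssmotk} (empty prefix), both the sub-word $\underline{k}_1$ and the sub-word $\underline{k}_1\underline{k}_2$. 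Applying that proposition to the sub-word $\underline{k}_1\underline{k}_2$ gives $f_{00}(\underline{w})=f_{01}(\underline{k}_1\underline{k}_2)+f_{00}(\underline{0})=f_{01}(\underline{k}_1\underline{k}_2)$, while applying it to the sub-word $\underline{k}_1$ gives $f_{00}(\underline{w})=f_{01}(\underline{k}_1)+f_{00}(\underline{k}_2\,\underline{0})$, and since $\underline{k}_2\in K$ Remark~\ref{rem:add0} identifies $f_{00}(\underline{k}_2\,\underline{0})=f_{01}(\underline{k}_2)$. Comparing yields $f_{01}(\underline{k}_1\underline{k}_2)=f_{01}(\underline{k}_1)+f_{01}(\underline{k}_2)$.

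\emph{Conjugation invariance.} Fix $\underline{k}\in K$ and $\gamma\in\mathcal{S}_L\ast\mathcal{S}_{L^\ast}$. By Remark~\ref{rem:add0} and Proposition~\ref{prop:c1} one has $f_{01}(\underline{k})=f_{00}(\underline{k}\,\underline{0})=\Mas(\omega_{\underline{k}})$, where $\omega_{\underline{k}}$ is the Lagrangian loop based at $L$ read off from the word $\underline{k}\,\underline{0}\in K$ (it closes up because $E(\underline{k}\,\underline{0})=\mathrm{Id}$). Reading off in the same way the Lagrangian path attached to the word $\gamma\,\underline{k}\,\underline{0}\,\gamma^{-1}$ (which also represents $\gamma\underline{k}\gamma^{-1}$, and lies in $K$), one uses that evaluating a partial product that begins with $\gamma$ simply applies $E(\gamma)$ to the corresponding partial product of $\underline{k}\,\underline{0}\,\gamma^{-1}$, and that the final block $\gamma^{-1}$ retraces the initial block $\gamma$ in reverse once $E(\underline{k})=\mathrm{Id}$ is used. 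Hence this path is the concatenation $p_\gamma\ast\bigl(E(\gamma)\cdot\omega_{\underline{k}}\bigr)\ast p_\gamma^{-1}$, possibly carrying in addition the constant sub-loop $L,L^\ast,L$ whose Sylvester matrix is the zero form. By Proposition~\ref{prop actilacets}, $S\bigl(p_\gamma\ast(E(\gamma)\cdot\omega_{\underline{k}})\ast p_\gamma^{-1}\bigr)=S(E(\gamma)\cdot\omega_{\underline{k}})\bot S(p_\gamma\ast p_\gamma^{-1})$; the second summand is neutral by Proposition~\ref{prop:relsdansMWR}(1), and the first has the Witt class of $S(\omega_{\underline{k}})$ by the equivariance of Sylvester matrices, Lemma~\ref{lem:sylvis equiv}. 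Therefore $f_{01}(\gamma\underline{k}\gamma^{-1})=\Mas(\omega_{\underline{k}})=f_{01}(\underline{k})$. I expect the only delicate point of the whole proof to be precisely this identification of the path of $\gamma\,\underline{k}\,\underline{0}\,\gamma^{-1}$ with $p_\gamma\ast(E(\gamma)\cdot\omega_{\underline{k}})\ast p_\gamma^{-1}$, i.e.\ keeping track of the index shifts produced by the letters of $\gamma$ and of the padding needed to make the concatenated words of the correct type.

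\emph{That the image lies in $I^2$.} Represent $\underline{k}\in K$ by a reduced word of type $01$, say $\underline{k}=(k_0,k_1,\dots,k_{2\ell+1})$, so the support of $S(\underline{k})$ has dimension $r=(2\ell+2)g$. The determinant of the tridiagonal block matrix $S(\underline{k})$ (identity blocks off the diagonal, $(-1)^ik_i$ on the diagonal) is given by the classical block continuant recursion, i.e.\ $\det S(\underline{k})$ is the determinant of the upper left $g\times g$ block of the product $\prod_{i=0}^{2\ell+1}T_i$ with $T_i=\left(\begin{smallmatrix}(-1)^{i}k_{i}&-I_{g}\\ I_{g}&0\end{smallmatrix}\right)$. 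Writing $J=\left(\begin{smallmatrix}0&-I_{g}\\ I_{g}&0\end{smallmatrix}\right)$ one has $T_i=E(k_i)J$ for $i$ even and $T_i=JE(k_i)$ for $i$ odd, and $J^2=-I_{2g}$; grouping the $J$'s into the $\ell+1$ consecutive pairs turns the product into $(-1)^{\ell+1}E(k_0)E(k_1)\cdots E(k_{2\ell+1})=(-1)^{\ell+1}E(\underline{k})=(-1)^{\ell+1}I_{2g}$, since $\underline{k}\in K$. Thus $\det S(\underline{k})=(-1)^{(\ell+1)g}\in k^\times$; in particular $S(\underline{k})$ is non-degenerate, so $f_{01}(\underline{k})=[S(\underline{k})]$ has rank $r=(2\ell+2)g$, which is even, whence $f_{01}(\underline{k})\in I$. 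Finally, as $r=2(\ell+1)g$ one has $(-1)^{r(r-1)/2}=(-1)^{(\ell+1)g}$, so the discriminant of $f_{01}(\underline{k})$ is $(-1)^{r(r-1)/2}\det S(\underline{k})=(-1)^{(\ell+1)g}\cdot(-1)^{(\ell+1)g}=1$ in $k^\times/(k^\times)^2$; since $I^2$ is exactly the kernel of the discriminant on $I$, we conclude $f_{01}(\underline{k})\in I^2$. (The invariance under the $\mathcal{S}_L\ast\mathcal{S}_{L^\ast}$-action is of course consistent with the homomorphism property, conjugation by elements of $K$ already being trivial on values of a homomorphism to an abelian group.)
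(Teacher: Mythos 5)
Your proof is correct, and parts of it genuinely diverge from the paper's. The homomorphism step is the paper's argument verbatim (two applications of Proposition~\ref{prop:ssmotk} to $\underline{k}_1\underline{k}_2\,\underline{0}$ plus Remark~\ref{rem:add0}). For conjugation invariance the paper is slicker: it applies Proposition~\ref{prop:ssmotk} once to the type-$00$ word $w_{00}\overline{0}\,k_{01}\,w_{00}^{-1}\overline{0}\,\underline{0}$, peeling off $f_{01}(k_{01})$ and leaving $f_{00}$ of a word that reduces to $\underline{0}$ in the free product, hence contributes $0$; your route through the explicit path decomposition $p_\gamma\ast(E(\gamma)\cdot\omega_{\underline{k}})\ast p_\gamma^{-1}$, Proposition~\ref{prop actilacets}, Proposition~\ref{prop:relsdansMWR}(1) and equivariance is valid but does all the bookkeeping that Proposition~\ref{prop:ssmotk} was designed to hide --- you correctly identify that bookkeeping as the delicate point. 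For the $I^2$ statement the paper simply cites Barge--Lannes (Scholie 5.5.6) for the identity $\operatorname{disc}S(k_{01})=\det(\mathrm{Id})=1$, whereas you re-derive it by the transfer-matrix/continuant telescoping $T_0\cdots T_{2\ell+1}=(-1)^{\ell+1}E(\underline{k})$; this is a nice self-contained substitute, but two points deserve a line of justification: (i) the block-continuant formula coming from the $LDU$ factorization actually expresses $\det S$ as the determinant of the corner block of the \emph{descending} product $T_{2\ell+1}\cdots T_0$, and showing that this also telescopes to $(-1)^{\ell+1}I_{2g}$ uses that the reversed word still evaluates to the identity (transpose the relation $E(k_0)\cdots E(k_{2\ell+1})=\mathrm{Id}$ and use that each $k_i$ is symmetric); (ii) your parity assignment ($k_i\in\mathcal{S}_L$ for $i$ even) is the opposite of the paper's convention $q_j:L_j\to L_{j+1}$, though the telescoping gives the same scalar either way. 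Your closing remark that the kernel of the discriminant on $I$ is $I^2$, and the observation that non-degeneracy of $S(k_{01})$ (equivalently $L\pitchfork L^\ast$ at the endpoints of the associated path) is what makes the even-rank argument legitimate, are both correct and in fact slightly more careful than the paper.
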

% % % % % % % % % % % % % % % % % % % % % % % % % % % % %
\begin{proof} Let $\underline{k}$ and $\underline{\ell}$ denote two element is $K$, and fix for each of them a representative of type $01$, respectively $\underline{k}_{01}$ and   $\underline{\ell}_{01}$. Compute:
\begin{eqnarray*}
f_{01}(\underline{k}_{01}\underline{\ell}_{01}) & = & f_{00}(\underline{k}_{01}\underline{\ell}_{01}\underline{0}) \textrm{ by Remark~\ref{rem:add0}} \\
 &=& f_{01}(\underline{k}_{01}) + f_{00}(\underline{\ell}_{01}\underline{0}) \textrm{ by Proposition~\ref{prop:ssmotk} }\\
 & = & f_{01}(\underline{k}_{01}) + f_{01}(\underline{\ell}_{01}) \textrm{ by Remark~\ref{rem:add0}}
\end{eqnarray*}

To show invariance under conjugation, we fix a word  $w_{00} \in \mathcal{S}_{L} \ast \mathcal{S}_{L^\ast}$  of type $00$. Then $w_{00}\overline{0}k_{01}w_{00}^{-1}\overline{0}$  is a representative  of type $01$  of the conjugated of $\underline{k}$ by $\underline{w}$, and as before:
\begin{eqnarray*}
f_{01}(w_{00}\overline{0} k_{01}w_{00}^{-1}\overline{0}) & = & f_{00}(w_{00}\overline{0} k_{01}w_{00}^{-1}\overline{0}\underline{0})  \textrm{ by Remark~\ref{rem:add0}} \\
& = & f_{01}(k_{01}) +  f_{00}(w_{00}\overline{0}w_{00}^{-1}\overline{0}\underline{0}) \textrm{ by Proposition~\ref{prop:ssmotk} }
\end{eqnarray*}

But $w_{00}\overline{0}w_{00}^{-1}\overline{0}\underline{0}$ is a representative of type $00$ of $\underline{0}$, hence
 \[
f_{00}(w_{00}\overline{0}w_{00}^{-1}\overline{0}\underline{0}) = f_{00}(\underline{0}) = 0.
\]

We finally have to show that for all $\underline{k} \in K$ we have $f_{01}(\underline{k})  \in I^2$. Given a representative  $k_{01} = k_0,k_1\dots k_{2r+1}$ of type $01$ of $\underline{k}$, observe that the Sylvester matrix $S(k_{01})$  has as support the direct sum of  $r+1$ copies of the pair $L \oplus L^\ast$, in particular this vector space has even dimension and hence the class of  $S(k_{01})$ in $W(k)$ lies in $I =\ker( W(k) \longrightarrow \mathbb{Z}/2)$.

We need now to compute the  discriminant of $S(k_{01})$. By definition of $K$,

\[
E(k_0 k_1 \cdots k_{2r+1}) =Id \in GL_g(k) \subset Sp_{2g}(k).
\]

Now, if in a Strum sequence of even length, one multiplies the associated elementary matrices in the symplectic group, and as a result one obtains a matrix that is diagonal by blocks as:
\[
\left(
\begin{matrix}
a & 0 \\
0 & {}^ta^{-1}
\end{matrix}
\right) \in GL_g(k )\subset Sp_{2g}(k)
\]
Barge-Lannes~\cite[Scholie 5.5.6 p.111]{MR2431916} show then that the determinant of  $a$ is equal to the discriminant of the  Sylvester matrix associated to the initial Sturm sequence. In our present case, this gives us as we wanted:

\[
\operatorname{dis} (S(k_{01})) = \det Id = 1.
\]
\end{proof}

% % % % % % % % % % % % % % % % % % % % % % % % % % % % %
% % % % % % % % % % % % % % % % % % % % % % % % % % % % %
\subsection{A cocycle for the fundamental extension}
% % % % % % % % % % % % % % % % % % % % % % % % % % % % %
% % % % % % % % % % % % % % % % % % % % % % % % % % % % %

Let us push-out our original exact sequence:
\[
\xymatrix{
1 \ar[r] & K \ar[r]  & \mathcal{S}_L \ast \mathcal{S}_{L^\ast} \ar[r]    & Sp_{2g}(k)  \ar[r] & 1 \\
}
\]
along the composite morphism $f_{01} : K \rightarrow I ^2 \hookrightarrow W(k)$:
\[
\xymatrix{
1 \ar[r] & K \ar[r] \ar[d]^-{f_{01}} & \mathcal{S}_L \ast \mathcal{S}_{L^\ast} \ar[r]  \ar[dl]^{f_{00}} \ar[d] & Sp_{2g}(k) \ar@{=}[d] \ar[r] & 1 \\
0 \ar[r] & W(k) \ar[r]& \Gamma \ar[r] & Sp_{2g}(k) \ar[r] & 1
}
\]

We caution the reader that the diagonal arrow labeled $f_{00}$ is of course not a group homomorphism.  Invariance of $f_{01}$ with respect to the conjugation action implies that the bottom extension is a central extension. Finally, commutativity of the upper left triangle allows us to write an explicit $2$-cocycle for the bottom central extension:

% % % % % % % % % % % % % % % % % % % % % % % % %
\begin{propdef}\label{propdef:cocyclemu}
% % % % % % % % % % % % % % % % % % % % % % % % %
Let $x,y \in Sp_{2g}(k)$ and choose arbitrary type $00$  lifts of these elements, respectively denoted  $\tilde{x}$ and $\tilde{y}$, in $\mathcal{S}_L \ast \mathcal{S}_{L^\ast}$. Then the function:
\[
\mu(x,y) = f_{00}(\tilde{x}\overline{0}\tilde{y}) - f_{00}(\tilde{x}) - f_{00}(\tilde{y})
\]
is independent of the choice of the liftings and defines a $2$-cocycle for the extension
\[
\xymatrix@1{
0 \ar[r] & W(k) \ar[r]& \Gamma \ar[r] & Sp_{2g}(k) \ar[r] & 1.
}
\]
\end{propdef} 
% % % % % % % % % % % % % % % % % % % % % % % % % 
\begin{proof}
Recall that $\Gamma$ as a set is a quotient of the set  $W(k)\times (\mathcal{S}_L \ast \mathcal{S}_{L^\ast})$. One checks from the definitions that the function which on a couple  $(w,\widetilde{x})$ takes the value
\[
r(w,\widetilde{x}) = w + f_{00}(\widetilde{x})
\]
is in fact a retraction of the extension. A direct and classical computation shows then that the $2$-cocycle associated to this retraction is $\mu$.
\end{proof}

We now identify the above cocyle with the Maslov index of a triple of Lagrangians introduced in Theorem~\ref{thm propMaslov}.

% % % % % % % % % % % % % % % % % % % % % % % % % 
\begin{proposition}\label{prop:cocyis Maslov}
% % % % % % % % % % % % % % % % % % % % % % % % % 
For any $x,y \in Sp_{2g}(k)$ we have:
\[
\mu(x,y) = \mu_{BL}(x^{-1}L,L,yL).
\]
\end{proposition}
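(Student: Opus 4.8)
The plan is to compute both sides explicitly using the recipe that links Sturm sequences to Lagrangian paths, namely Proposition~\ref{prop:c1}. Given $x, y \in Sp_{2g}(k)$, pick type-$00$ representatives $\tilde x$ and $\tilde y$ as Sturm sequences over $L$; say $\tilde x$ corresponds to $(p_0, \dots, p_{2r})$ and $\tilde y$ to $(s_0, \dots, s_{2t})$. By Proposition~\ref{prop:c1}, the Sylvester matrix $f_{00}(\tilde x) = S(\tilde x)$ is the Sylvester matrix of the Lagrangian path $\alpha_x: L_0, L_1, E(p_0)L_0, \dots, E(p_0\cdots p_{2r})L_0$, which is a Lagrangian path from $L$ to $E(p_0\cdots p_{2r})L = xL$ (since $E(\tilde x) = x$); and similarly $f_{00}(\tilde y) = S(\alpha_y)$ where $\alpha_y$ runs from $L$ to $yL$. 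The concatenated word $\tilde x \overline{0}\tilde y$ is a type-$00$ word representing $xy$, and its associated Lagrangian path is $\alpha_x$ followed (after translating $\alpha_y$ by the symplectic element $x$, using the $E$-equivariance built into Proposition~\ref{prop:c1}) by $x\cdot\alpha_y$, which is a path from $xL$ to $xyL$. So $f_{00}(\tilde x\overline 0 \tilde y) = S(\alpha_x \ast (x\cdot\alpha_y))$.

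Next I would feed these into the definition of $\mu_{BL}$ from Proposition~\ref{prop defindice}, applied to the triple $(x^{-1}L, L, yL)$. Choose the path $\alpha_{01}$ from $x^{-1}L$ to $L$ to be $x^{-1}\cdot\alpha_x$ (the image of $\alpha_x$ under the symplectic element $x^{-1}$, which runs from $x^{-1}L$ to $L$), and choose $\alpha_{12}$ from $L$ to $yL$ to be $\alpha_y$. By Lemma~\ref{lem:sylvis equiv}, $S(x^{-1}\cdot\alpha_x) = S(\alpha_x) = f_{00}(\tilde x)$, and $S(\alpha_{12}) = S(\alpha_y) = f_{00}(\tilde y)$. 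For the concatenation, $\alpha_{01}\ast\alpha_{12} = (x^{-1}\cdot\alpha_x)\ast\alpha_y$; applying $x$ to the whole concatenation gives $\alpha_x \ast (x\cdot\alpha_y)$, so by Lemma~\ref{lem:sylvis equiv} again, $S(\alpha_{01}\ast\alpha_{12}) = S(\alpha_x\ast(x\cdot\alpha_y)) = f_{00}(\tilde x\overline 0\tilde y)$. Plugging in:
\[
\mu_{BL}(x^{-1}L, L, yL) = S(\alpha_{01}\ast\alpha_{12})\bot -S(\alpha_{01})\bot -S(\alpha_{12}) = f_{00}(\tilde x\overline 0\tilde y)\bot -f_{00}(\tilde x)\bot -f_{00}(\tilde y),
\]
which in $W(k)$ is exactly $\mu(x,y)$ by Definition-Proposition~\ref{propdef:cocyclemu}.

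The main obstacle I anticipate is bookkeeping the equivariance correctly: one must verify that the Lagrangian path associated by Proposition~\ref{prop:c1} to a concatenated word $\tilde x\overline 0\tilde y$ really is $\alpha_x$ followed by the $x$-translate of $\alpha_y$, including checking the parity conventions (the $\overline 0$ insertion keeps types matching, and $E(\overline 0) = \id$ so it is absorbed) and checking that the basepoint of the second piece, $E(\tilde x)L_0 = xL$, matches the endpoint of $\alpha_x$. One should also confirm that the extra $L_0, L_1$ prefix in $\alpha_y$ versus the fact that $\alpha_x$ already ends at $xL$ does not create a mismatch — here the insertion of $\overline 0$ (i.e.\ the identity of $\mathcal S_{L^\ast}$) is precisely what reconciles the parity, so the concatenated path is genuinely a Lagrangian path with consecutive terms transverse. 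Once the translation/parity dictionary is nailed down, the identity is a direct substitution, with Lemma~\ref{lem:sylvis equiv} doing all the real work.
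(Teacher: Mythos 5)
Your proposal is correct and follows essentially the same route as the paper's own proof: identify $f_{00}(\tilde x\overline 0\tilde y)$ with the Sylvester class of $\alpha_x$ concatenated with the $x$-translate of $\alpha_y$, then use the equivariance of Sylvester matrices (Lemma~\ref{lem:sylvis equiv}) to translate by $x^{-1}$ and recognize the defining expression of $\mu_{BL}(x^{-1}L,L,yL)$ for the specific choice of paths $x^{-1}\cdot\alpha_x$ and $\alpha_y$, invoking the path-independence of Proposition~\ref{prop defindice}. The bookkeeping points you flag (absorption of $\overline 0$ via $E(\overline 0)=\id$ and the matching of endpoints) are exactly the ones the paper handles, so no gap remains.
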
 
% % % % % % % % % % % % % % % % % % % % % % % % % 
\begin{proof}
By definition $f_{00}(\tilde{x})$ is the Witt class of the Sylvester matrix associated to a precise path from $L_0$ to $E(\tilde{x})L_0$, say $\alpha_{\tilde{x}}$, and $f_{00}$ is 	the Sylvester matrix associated to a precise path from $L_0$ to $E(\tilde{y})L_0$, say $\alpha_{\tilde{y}}$. The problem is that $f_{00}(\tilde{x}\overline{0}\tilde{y})$ is not on the nose the Witt class of the Sylvester matrix associated to the concatenation $\alpha_{\tilde{x}} \ast \alpha_{\tilde{y}}$, instead it is the class of the concatenation $\alpha_{\tilde{x}} \ast E(\tilde{x}\overline{0})\alpha_{\tilde{y}}$.
By the equivariance of the Sylvester matrix of a Lagrangian path, Lemma~\ref{lem:sylvis equiv}, $f_{00}(\tilde{x}\overline{0}\tilde{y})$ is also the class of the Lagrangian path $E(\tilde{x})^{-1}(\alpha_{\tilde{x}}\ast E(\tilde{x}\overline{0})\alpha_{\tilde{y}} = (E(\tilde{x})^{-1}(\alpha_{\tilde{x}}))\ast \alpha_{\tilde{y}}$, and the same argument shows that $f_{00}(\tilde{x})$ is the class of the path $E(\tilde{x})^{-1} \alpha_{\tilde{x}}$, a path from $E(\tilde{x})^{-1}L$ to $L$. So the definition of $\mu(x,y)$ is precisely  that of $\mu(x^{-1}L,L,yL)$.
\end{proof}

From the general properties of the Maslov index of a triple of Lagrangians, we have the recover the following well-known properties of the Maslov cocycle.

% % % % % % % % % % % % % % % % % % % % % % % % % 
\begin{proposition}\label{prop:propMaslovcocy}
% % % % % % % % % % % % % % % % % % % % % % % % % 
The value of $\mu$ at $(x,y)$ only depends on the three Lagrangians  $L,x^{-1}L$ and $yL$. More generally, for any $\phi \in Sp_{2g}(k)$, the value of $\mu(x,y) \in W(k)$ only depends on the triple of Lagrangians $(\phi x^{-1}L, \phi L, \phi yL)$.  More precisely it only depends on the cosets determined by the elements $x^{-1}$ and $y$ in $Sp_{2g}(k)/Stab(L)$.
In particular if two of the three Lagrangians $L$, $xL$ and $xyL$ coincide, then $\mu(x,y)=0$.
\end{proposition}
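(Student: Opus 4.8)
The plan is to deduce everything from the identification $\mu(x,y) = \mu_{BL}(x^{-1}L, L, yL)$ established in Proposition~\ref{prop:cocyis Maslov}, combined with the equivariance and degeneracy properties of $\mu_{BL}$ collected in Theorem~\ref{thm propMaslov}. In particular no new computation with Sylvester matrices is needed; the statement is a formal consequence of results already in hand.

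First, Proposition~\ref{prop:cocyis Maslov} already exhibits $\mu(x,y)$ as a function of the ordered triple of Lagrangians $(x^{-1}L, L, yL)$, which is the first assertion. For the ``more general'' claim I would simply apply part (2) of Theorem~\ref{thm propMaslov}: for any $\phi \in Sp_{2g}(k)$,
\[
\mu(x,y) = \mu_{BL}(x^{-1}L, L, yL) = \mu_{BL}(\phi x^{-1}L, \phi L, \phi yL),
\]
so the value depends only on the triple $(\phi x^{-1}L, \phi L, \phi yL)$. To descend to cosets, I would observe that $x^{-1}L$ is unchanged if $x^{-1}$ is replaced by $x^{-1}s$ for $s \in Stab(L)$ — this is exactly the statement that $Stab(L)$ preserves $L$ — so $x^{-1}L$ is a function of the coset $x^{-1}\,Stab(L) \in Sp_{2g}(k)/Stab(L)$, and likewise $yL$ depends only on $y\,Stab(L)$; with the previous display this gives dependence only on the two cosets determined by $x^{-1}$ and $y$.

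For the last assertion I would rewrite, applying Theorem~\ref{thm propMaslov}(2) with $\phi = x$,
\[
\mu(x,y) = \mu_{BL}(x^{-1}L, L, yL) = \mu_{BL}(L, xL, xyL),
\]
which is the usual ``cocycle'' normalization of the Maslov index; if any two of $L$, $xL$, $xyL$ coincide, part (1) of Theorem~\ref{thm propMaslov} yields $\mu(x,y) = 0$. The proof presents no genuine obstacle — it is pure bookkeeping — the only point demanding any care being to keep track of the side on which the stabilizer acts, i.e. that $g \mapsto gL$ is constant on the left cosets $g\,Stab(L)$.
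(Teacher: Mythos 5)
Your proposal is correct and follows exactly the route the paper intends: the paper states this proposition without proof precisely because it is an immediate consequence of the identification $\mu(x,y)=\mu_{BL}(x^{-1}L,L,yL)$ from Proposition~\ref{prop:cocyis Maslov} together with the equivariance and vanishing properties (2) and (1) of Theorem~\ref{thm propMaslov}, which is exactly the bookkeeping you carry out (including the step $\phi=x$ giving $\mu_{BL}(L,xL,xyL)$ and the observation that $g\mapsto gL$ is constant on left cosets of $Stab(L)$).
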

\subsection{Triviality of $\mu$ modulo $I^2$ and computations}\label{subsec:trivmodI2}
%%%%%%%%%%%%%%%%%%%%%%%%%%%%%%%%%%%%
%%%%%%%%%%%%%%%%%%%%%%%%%%%%%%%%%%%%

Let us now consider the mod  $I^2$ reduction of the central extension defined by the $2$-cocycle $\mu$:

\[
\xymatrix{
1 \ar[r] & K \ar[r] \ar[d]^-{f_{01}} & \mathcal{S}_L \ast \mathcal{S}_{L^\ast} \ar[r]  \ar[d] & Sp_{2g}(k) \ar@{=}[d] \ar[r] & 1 \\
0 \ar[r] & W(k) \ar[r]\ar[d] & \Gamma \ar[r] \ar[d] & Sp_{2g}(k) \ar[r] \ar@{=}[d] & 1 \\
0 \ar[r] & W(k)/I^2 \ar[r]& \overline{\Gamma} \ar[r] & Sp_{2g}(k) \ar[r] & 1 
}
\]

As  $f_{01}: K \longrightarrow W(k)$ factors through $I^2$, the bottom extension trivially splits. Moreover, according to Proposition~\ref{prop:ssmotk}, the function $f_{00}: \mathcal{S}_{L}\ast \mathcal{S}_{L^\ast} \rightarrow W(k) \textrm{ mod } I^2$ factors through $Sp_{2g}(k)$, and the shape of $\mu$ given in  Definition-Proposition~\ref{propdef:cocyclemu} tells us that:

% % % % % % % % % % % % % % % % % % % % % % % % % % % % %
\begin{theorem}\label{thm:redmoI2esttriv}
% % % % % % % % % % % % % % % % % % % % % % % % % % % % % 
The function $\Phi : Sp_{2g}(k) \rightarrow W(k)/I^2$ that associates to $x \in Sp_g(k)$ the element $f_{00}(x) \textrm{ mod } I^2$ is the unique function on $Sp_{2g}(k)$ that satisfies the equation:
\[
\forall x,y \in Sp_{2g}(k) \quad  \Phi(xy) - \Phi(x) - \Phi(y) = \mu(x,y) \textrm{ mod } I^2.
\]
\end{theorem}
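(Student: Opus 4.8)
The plan is to split the statement into two parts: first, that $\Phi(x) = f_{00}(\tilde{x}) \bmod I^2$ is well-defined (independent of the type $00$ lift $\tilde{x}$ and hence a genuine function on $Sp_{2g}(k)$), and second, that it satisfies the stated coboundary equation, followed by the uniqueness.

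For well-definedness, I would argue as follows. Two type $00$ lifts $\tilde{x}, \tilde{x}'$ of the same element $x \in Sp_{2g}(k)$ differ by an element of $K$: concretely $\tilde{x}' = \tilde{x}\,\overline{0}\,\underline{k}$ for some $\underline{k}$ that can be chosen of type $01$ (inserting $\overline{0}$ to make the concatenation legal), and $\tilde{x}\,\overline{0}\,\underline{k}$ represents an element of the kernel $K$ precisely when... no — the point is that $\tilde x^{-1}\tilde x'$ lies in $K$. Writing this as a word of the appropriate type containing a type $01$ sub-word representing an element of $K$ and applying Proposition~\ref{prop:ssmotk}, we get $f_{00}(\tilde{x}') = f_{01}(\underline{k}) + f_{00}(\tilde{x})$ for a suitable $\underline{k} \in K$ of type $01$. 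By Lemma~\ref{lem:fsrukcentr}, $f_{01}(\underline{k}) \in I^2$, so $f_{00}(\tilde{x}') \equiv f_{00}(\tilde{x}) \pmod{I^2}$. This is the key use of the fact that $f_{01}$ lands in $I^2$; I expect the main fiddly point here is the bookkeeping of word types (adding trailing/leading $\underline 0$ or $\overline 0$ to legalize concatenations), which Remark~\ref{rem:add0} handles since all four functions agree on $K$.

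For the coboundary identity, fix $x,y$ and type $00$ lifts $\tilde{x}, \tilde{y}$. Then $\tilde{x}\,\overline{0}\,\tilde{y}$ is a type $00$ lift of $xy$, so by the definition of $\Phi$ (now known to be well-defined) and Definition-Proposition~\ref{propdef:cocyclemu},
\[
\Phi(xy) - \Phi(x) - \Phi(y) = f_{00}(\tilde{x}\,\overline{0}\,\tilde{y}) - f_{00}(\tilde{x}) - f_{00}(\tilde{y}) \bmod I^2 = \mu(x,y) \bmod I^2,
\]
which is exactly the assertion. So the existence half is essentially immediate once well-definedness is in hand.

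For uniqueness: any two functions $\Phi, \Phi'$ satisfying the equation have difference $\Psi = \Phi - \Phi' : Sp_{2g}(k) \to W(k)/I^2$ which is a group homomorphism (the coboundary terms cancel). Since $W(k)/I^2$ is abelian, $\Psi$ factors through the abelianization of $Sp_{2g}(k)$. As recalled in the introduction, $Sp_{2g}(k)$ is perfect except when $g=1$, $k=\mathbb{F}_3$, where the abelianization is $\mathbb{Z}/3$; but $W(\mathbb{F}_3)/I^2 \simeq \mathbb{Z}/4$ admits no nonzero homomorphism from $\mathbb{Z}/3$. In all cases $\Psi = 0$, giving uniqueness. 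The only genuine obstacle in the whole argument is the first step — carefully matching up an arbitrary change of lift with an insertion of a type $01$ element of $K$ so that Proposition~\ref{prop:ssmotk} applies — and this is really just a matter of normalizing word types; everything else is formal.
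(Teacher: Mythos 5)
Your proposal is correct and takes essentially the same route as the paper: well-definedness of $\Phi$ modulo $I^2$ by combining Proposition~\ref{prop:ssmotk} with Lemma~\ref{lem:fsrukcentr} (change of type $00$ lift alters $f_{00}$ by $f_{01}$ of an element of $K$, which lies in $I^2$), the coboundary identity directly from the form of $\mu$ in Definition-Proposition~\ref{propdef:cocyclemu}, and uniqueness from perfectness of $Sp_{2g}(k)$ with the $Sp_2(\mathbb{F}_3)$ case excluded because $W(\mathbb{F}_3)/I^2\simeq\mathbb{Z}/4$ admits no nontrivial homomorphism from $\mathbb{Z}/3$, exactly as in the paper's introduction. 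Your word-type bookkeeping (inserting $\overline{0}$, $\underline{0}$ and using Remark~\ref{rem:add0}) is precisely the intended normalization, so no gap remains.
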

% % % % % % % % % % % % % % % % % % % % % % % % % % % % %

We end by an observation on the function $\Phi$ that is both elementary and new. Recall that for any   $a \in k^\ast$  then   $\langle 1,-a\rangle$ denotes the associated Pfister form, and that the stabilizer of a Lagrangian $L$, $\textrm{Stab}_L \subset Sp_{2g}(k)$ is made of those matrices that according to the decomposition $k^{2g}= L \oplus L^\ast$ are of the form:
\[
\left(
\begin{matrix}
 x & u \\
 0 & {}^tx^{-1}
\end{matrix}
\right)
\]
where $x \in GL_g(k)$ and $u$ is a symmetric bilinear form on $L^\ast$. In particular there is a split short exact sequence:
\[
 1 \longrightarrow \cS_{L^\ast} \longrightarrow \textrm{Stab}_L \longrightarrow GL_g(k) \longrightarrow 1.
\]
A section is given by the function $h:x \longmapsto 
h(x)=\left(
\begin{matrix}
 x & 0 \\
 0 & {}^tx^{-1}
\end{matrix}
\right).
$
%%%%%%%%%%%%%%%%%%%%%%%%%%%%%%%%%%%%%%
\begin{proposition}\label{prop phisurstabL}
%%%%%%%%%%%%%%%%%%%%%%%%%%%%%%%%%%%%%%
The restriction of $\Phi$ to the subgroup $Stab_L$ is a morphism with values in the fundamental ideal $I$, it factors through the canonical projection  $\operatorname{Stab}_L \rightarrow  GL_g(k)$; more precisely: 
 \[\Phi(\left(
\begin{matrix}
 x & u \\
 0 & {}^tx^{-1}
\end{matrix}
\right)) =\langle 1,-\operatorname{det} x\rangle  \in I/I^2 = k^\ast/(k^\ast)^2.
\]
\end{proposition}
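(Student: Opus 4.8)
The plan is to compute $\Phi$ on the two obvious types of generators of $\operatorname{Stab}_L$ — the shears $E(u)\in\mathcal S_{L^\ast}$ and the ``diagonal'' matrices $h(x)$ — and then use that $\Phi$ is a crossed homomorphism on all of $Sp_{2g}(k)$ (Theorem~\ref{thm:redmoI2esttriv}) together with the vanishing of the Maslov cocycle on pairs whose associated Lagrangian triple is degenerate (Proposition~\ref{prop:propMaslovcocy}) to conclude that $\Phi\restr{\operatorname{Stab}_L}$ is an honest homomorphism. First I would observe that for $g\in\operatorname{Stab}_L$ one has $gL=L$, so for any $x\in\operatorname{Stab}_L$ and any $y$, the triple $(L,\, x^{-1}L,\, xyL)=(L,L,xyL)$ has two equal entries; by Proposition~\ref{prop:propMaslovcocy}, $\mu(x,y)=0$ whenever $x\in\operatorname{Stab}_L$, and symmetrically $\mu(y,x)=0$ whenever $x\in\operatorname{Stab}_L$. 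Feeding this into the crossed-homomorphism identity $\Phi(xy)-\Phi(x)-\Phi(y)=\mu(x,y)\bmod I^2$ shows that $\Phi$ is additive on $\operatorname{Stab}_L\times\operatorname{Stab}_L$, hence $\Phi\restr{\operatorname{Stab}_L}\colon\operatorname{Stab}_L\to W(k)/I^2$ is a group homomorphism.

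Next I would evaluate $\Phi$ on the shears. For $u$ a symmetric form on $L^\ast$, the element $E(u)\in\mathcal S_{L^\ast}$ is already a type $00$ word of length one (a Sturm sequence $(q_0)$ with $q_0=u$, up to the bookkeeping $\underline 0$'s), so $f_{00}(E(u))$ is the class of the $1\times 1$ Sylvester matrix $\langle u\rangle$ if $u$ is nondegenerate, and more generally of its regularization; in any case this has odd-or-variable rank, but the point is only its class mod $I^2$. Actually the cleaner route, avoiding rank parity worries, is: since $\mathcal S_{L^\ast}\subset\operatorname{Stab}_L$ and $\Phi\restr{\operatorname{Stab}_L}$ is a homomorphism into $W(k)/I^2$ while $\mathcal S_{L^\ast}$ is a $k$-vector space (in particular a divisible, hence $2$-divisible torsion-free abelian group when $k$ is infinite, and killed by the characteristic otherwise), the image of $\mathcal S_{L^\ast}$ is a subgroup of $W(k)/I^2$ with no odd torsion and, more to the point, every element of $\mathcal S_{L^\ast}$ is a square ($u=2\cdot(u/2)$ when $\operatorname{char}k\neq2$), so $\Phi(E(u))\in 2\cdot(W(k)/I^2)$. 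Composing with the rank-mod-$2$ map $W(k)/I^2\to\Z/2$ kills $2\cdot(W(k)/I^2)$, so $\Phi(E(u))\in I/I^2$; and composing with the discriminant $I/I^2\simeq k^\times/(k^\times)^2$, the image is again a square since $u$ is, so $\Phi(E(u))=0$. Hence $\Phi$ vanishes on $\mathcal S_{L^\ast}$ and therefore factors through $\operatorname{Stab}_L/\mathcal S_{L^\ast}=GL_g(k)$.

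It remains to compute $\Phi$ on $h(x)$ for $x\in GL_g(k)$, where I expect the main work — and the main obstacle — to lie. Using the factorization recalled in Section~\ref{subsec:Sturmand Sylvester}: if $x=p^{-1}q$ with $p,q$ symmetric, then $h(x)=m(-p)m(q)$ with $m(q)=E(q)E(-q^{-1})E(q)$ a length-three Sturm word, so $h(x)$ admits the explicit type $00$ representative $E(-p)E(p^{-1})E(-p)E(q)E(-q^{-1})E(q)$ (adjusting parities with $\overline0,\underline0$ as needed), and $\Phi(h(x))=f_{00}$ of this word mod $I^2$. Invoking \cite[Scholie 5.5.6]{MR2431916} as in the proof of Lemma~\ref{lem:fsrukcentr}: for a Sturm word of even length whose evaluation is block-diagonal $\operatorname{diag}(a,{}^ta^{-1})$, the discriminant of the associated Sylvester matrix equals $\det a$. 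Applying this with $a=x$ gives $\operatorname{dis}(f_{00}(h(x)))=\det x$. Since $\Phi\restr{\operatorname{Stab}_L}$ is a homomorphism, it takes values in the fundamental ideal $I/I^2$ (the rank-mod-$2$ composite is a homomorphism $GL_g(k)\to\Z/2$; one checks it is trivial, e.g. because $GL_g$ is generated by elementary and diagonal matrices on which the Sylvester matrix of the chosen word has even-rank regularization — alternatively because $\Phi(h(x))\in I$ already follows from $h(x)=h(\sqrt{\ }\cdots)$ type $2$-divisibility arguments when available, but the clean statement is that $\Phi$ restricted to the derived subgroup is in $I$ and $\det$ already detects the abelianization). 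Under $I/I^2\simeq k^\times/(k^\times)^2$ via the discriminant, the homomorphism $x\mapsto\Phi(h(x))$ is then identified with $x\mapsto\det x\bmod(k^\times)^2$, i.e. $\Phi(h(x))=\langle 1,-\det x\rangle$. Combining with the vanishing on $\mathcal S_{L^\ast}$ yields, for a general element of $\operatorname{Stab}_L$,
\[
\Phi\!\left(\begin{matrix} x & u\\ 0 & {}^tx^{-1}\end{matrix}\right)=\Phi(h(x))=\langle 1,-\det x\rangle\in I/I^2=k^\times/(k^\times)^2,
\]
as claimed. The delicate point throughout is bookkeeping the parity conventions and the $\overline0,\underline0$ insertions so that $f_{00}$ is literally applicable, and checking carefully that the rank-mod-$2$ component of $\Phi\restr{\operatorname{Stab}_L}$ vanishes so that the discriminant computation actually pins down the class; both are handled by the well-definedness results of Proposition~\ref{prop:4fonctdef} and Remark~\ref{rem:add0}.
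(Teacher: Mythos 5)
Your overall strategy matches the paper's: deduce that $\Phi$ restricted to $\operatorname{Stab}_L$ is a homomorphism from the vanishing of $\mu$ when two of the three Lagrangians coincide, kill the unipotent part so the map factors through $GL_g(k)$, and compute $\Phi(h(x))$ from a Sturm word built out of a factorization $x=p^{-1}q$, reading off the discriminant via the Barge--Lannes Scholie. But two steps are not justified as written. The smaller one is the shear case: your concluding move ``composing with the discriminant, the image is again a square since $u$ is'' is a non sequitur, since for $w\in W(k)/I^2$ of odd rank the element $2w$ lies in $I/I^2$ but need not have trivial discriminant (e.g.\ $2\langle 1\rangle$ has discriminant $-1$, and is nonzero in $W(\mathbb{R})/I^2\simeq\mathbb{Z}/4$). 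The divisibility idea does work, but you must push it one step further: write $u=4(u/4)$ and use that $W(k)/I^2$ has exponent dividing $4$ (because $2w\in I/I^2$ for every $w$, and $I/I^2\simeq k^\times/(k^\times)^2$ has exponent $2$); alternatively the paper simply regularizes the Sylvester matrix of $\underline{0},u,\underline{0}$ and finds the neutral form $\left(\begin{smallmatrix}0&1\\ 1&-u\end{smallmatrix}\right)$, so $f_{00}$ already vanishes on shears in $W(k)$.

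The more serious gap is in the computation of $\Phi(h(x))$: identifying the class with $\langle 1,-\det x\rangle$ via the discriminant isomorphism $I/I^2\simeq k^\times/(k^\times)^2$ requires knowing beforehand that $\Phi(h(x))$ lies in $I/I^2$, i.e.\ that the (regularized) Sylvester form of your chosen word has even rank. You flag this point but your proposed justifications do not establish it: a homomorphism $GL_g(k)\to\mathbb{Z}/2$ need not be trivial (sign of the determinant over $\mathbb{R}$), and controlling $\Phi$ on the derived subgroup says nothing about its rank-mod-$2$ component on, say, diagonal matrices. The paper closes this gap concretely: it uses the even-length type $00$ word $r\overline{0}$ with $r:\underline{-p},\overline{p^{-1}},\underline{-p+q},\overline{-q^{-1}},\underline{q}$, whose Sylvester form has support of dimension $6g$, and observes that since $h(x)$ also stabilizes $L^\ast$ the endpoints of the associated Lagrangian path are transverse, so by Corollary~\ref{cor:sylvdettrans} the form is non-degenerate of rank exactly $6g$; only then is the class in $I$, and only then does the Scholie's discriminant computation (which itself presupposes non-degeneracy) pin down the class modulo $I^2$. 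Supplying this transversality/non-degeneracy argument (or an equivalent even-rank argument) is needed to complete your proof.
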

%%%%%%%%%%%%%%%%%%%%%%%%%%%%%%%%%%%%%%%
\begin{proof} That $\Phi\vert_{\textrm{Stab}_L}$ is a  morphism is a direct consequence of the properties of the cocycle $\mu$, see~Proposition~\ref{prop:propMaslovcocy}. The following relation among symplectic matrices:
\[
\left(
\begin{matrix}
 1& u{}^tx \\
 0 & 1
\end{matrix}
\right)\left(
\begin{matrix}
 x & 0 \\
 0 & {}^tx^{-1}
\end{matrix}
\right)=\left(
\begin{matrix}
 x & u \\
 0 & {}^tx^{-1}
\end{matrix}
\right),
\]
shows that to compute  $\Phi$ it is enough to consider the cases where $u=0$ on the one hand and  $x=Id$ on the other.

\begin{enumerate}
\item Let us start with   $u \neq 0$ and $x= Id$. Our matrix is therefore an element in  $\mathcal{S}_{L^\ast}$, and we can use the Sturm sequence  $\underline{0}, u,\underline{0}$ to compute the value of $\Phi$. 
The associated Sylvester matrix is:
\[
\left(
\begin{matrix}
0 & 1 & 0 \\
1 & -u & 1 \\
0 & 1 & 0
\end{matrix}
\right)
\]
An immediate computation shows that this quadratic form, whose support is  $L \oplus L^\ast \oplus L$ has as kernel $L$, embedded as the elements of the form $(x,0,-x)$. Its regularization has  therefore as support $L \oplus L^\ast \oplus L/L \simeq L\oplus L^\ast$. The regularized matrix of  $S(\underline{0}u\underline{0})$  is then:
\[
\left(
\begin{matrix}
0 & 1  \\
1 & -u  
\end{matrix}
\right)
\] 
which is neutral and therefore $\Phi(u) =0$.

 \item If $u=0$. Recall that since we work over a field, given   $x \in GL_g(k)$, there exist two symmetric forms $p$ et $q$ such that $x = p^{-1}q$.

Let us define
\[
m(q) = \left(
\begin{matrix}
 1 & 0 \\
 q & 1
\end{matrix}
\right)
\left(
\begin{matrix}
 1 & -q^{-1} \\
 0 & 1
\end{matrix}
\right)
\left(
\begin{matrix}
 1 & 0 \\
 q & 1
\end{matrix} 
\right).
\]

Then

\[
m(-p)m(q) = \left(
\begin{matrix}
 p^{-1}q & 0 \\
 0 & {}^t(p^{-1}q)^{-1}
\end{matrix}
\right).
\]

Let us  denote by $r$ the type $00$  Sturm sequence, \[\underline{-p},\overline{p^{-1}}, \underline{-p+q},\overline{-q^{-1}},\underline{q},
\]
where we underline elements in $\mathcal{S}_{L^\ast}$ and overline elements in $\mathcal{S}_L$.
To compute $\Phi(h(x))$ we use the extended sequence  $r\overline{0}\underline{0}$. Since $x$ stabilizes $L$, and $r\overline{0}$ is a preimage of $x$, Proposition~\ref{prop:propMaslovcocy} shows that
\begin{eqnarray*}
\Phi(h(x)) & = & f_{01}(r\overline{0}) + f_{00}(\underline{0})\textrm{ mod } I^2 \\
& = & f_{01}(r\overline{0})\textrm{ mod } I^2 
\end{eqnarray*}

Since $x$ also stabilizes $L^\ast$, $E(r\overline{0})L^\ast = L^\ast \pitchfork L$ and the Sylvester matrix of $r \overline{0}$ is non-degenerate. The support of this quadratic form is $L\oplus L^\ast  \oplus L\oplus L^\ast \oplus L\oplus L^\ast$ which has even dimension $6g$, and hence $\Phi(h(x) )\in I$.

We now have to compute  the discriminant of $S(r\overline{0})$.  As in the proof of Lemma~\ref{lem:fsrukcentr}, the evaluation of the components of the Sturm sequence  $r\overline{0}$ gives by construction:
\[
E(r\overline{0})= h(x)
\]
and as  $r\overline{0}$  has an even number of elements, Scholie~5.2.6 in \cite{MR2431916} tells us exactly that:
\[
\operatorname{disc}(S(r\overline{0}))= \det(x^{-1})
 = \det(x) \textrm{ mod } (k^\ast)^2.
 \]
\end{enumerate}
\end{proof}

Let us now consider the group  $Stab_{L\oplus L^\ast}$ of those elements that stabilize the \emph{decomposition} $L \oplus L^\ast$ of $H(L)$. The elements of this group fall into two  disjoint families:
\begin{enumerate}
\item Those elements of the form:
\[
h(x)= \left(
\begin{matrix}
 x & 0 \\
 0 & {}^tx^{-1}
\end{matrix}
\right)
\]
with $x \in GL_g(k)$, they stabilize both Lagrangians separately. 
\item Those elements of the form
\[
 m(y)=\left(
\begin{matrix}
 0 & -{}^ty^{-1} \\
 y & 0
\end{matrix}
\right)
\]
with $y \in GL_g(k)$, that swap the two Lagrangians  $L$ and $L^\ast$. 

These two types of elements together clearly generate the group $Stab_{L\oplus L^\ast}$.
\end{enumerate}

% % % % % % % % % % % % % % % % % % % % % % % % % % % % % % % 
\begin{proposition}\label{prop:stabLetLast}
% % % % % % % % % % % % % % % % % % % % % % % % % % % % % % %
The restriction of the function $\Phi$ to the subgroup $Stab_{L\oplus L^\ast}$ is a group homomoprhism. Moreover
\[
\Phi( \left(
\begin{matrix}
 0 &  -{}^ty^{-1}\\
 y & 0
\end{matrix}
\right))
\]
is the Witt class of the pair  $\big((-1)^{\frac{g(g-1)}{2}}\operatorname{det}(y),3g\big) \in \big( k^\ast/(k^\ast)^2,\mathbb{Z}/4\big)$ via the morphism $F$ defined at the end of  Section~\ref{subsec:Witt}.
\end{proposition}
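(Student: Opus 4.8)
\emph{The homomorphism assertion.} By Theorem~\ref{thm:redmoI2esttriv} and Proposition~\ref{prop:cocyis Maslov}, for all $x,y\in Sp_{2g}(k)$ one has $\Phi(xy)-\Phi(x)-\Phi(y)=\mu_{BL}(x^{-1}L,L,yL)\bmod I^2$. If $x,y\in Stab_{L\oplus L^\ast}$, then $x^{-1}$ and $y$ both preserve the decomposition $L\oplus L^\ast$, so each of $x^{-1}L$ and $yL$ is equal to $L$ or to $L^\ast$; hence at least two of the three Lagrangians $x^{-1}L,L,yL$ coincide, and $\mu_{BL}(x^{-1}L,L,yL)=0$ by Theorem~\ref{thm propMaslov}(1). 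Therefore $\Phi(xy)=\Phi(x)+\Phi(y)$ in $W(k)/I^2$ for all $x,y\in Stab_{L\oplus L^\ast}$, i.e. the restriction of $\Phi$ to this subgroup is a group homomorphism.

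\emph{Reduction.} Since $Stab_{L\oplus L^\ast}$ is generated by the $h(x)$ and the $m(y)$, and $\Phi$ is now known to be a homomorphism there, and since $\Phi(h(x))=\langle 1,-\det x\rangle$ by Proposition~\ref{prop phisurstabL}, it suffices to compute $\Phi(m(y))$. I would first reduce to $y$ symmetric: a direct block computation gives the relation $m(y)=h({}^ty^{-1})\,m(1)$ in $Sp_{2g}(k)$, so by the homomorphism property and $\Phi(h({}^ty^{-1}))=\langle 1,-\det y\rangle$ (Proposition~\ref{prop phisurstabL}, using $\det({}^ty^{-1})\equiv\det y$ modulo squares) one gets $\Phi(m(y))=\langle 1,-\det y\rangle+\Phi(m(1))$; and for symmetric $y$ the matrix $m(y)=\left(\begin{smallmatrix}0 & -{}^ty^{-1}\\ y & 0\end{smallmatrix}\right)$ is exactly the matrix $m(q)$ of Section~\ref{subsec:Sturmand Sylvester} with $q=y$.

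\emph{The computation.} For symmetric $y$ the three-block word $\underline q=(y,-y^{-1},y)$ of type $(0,0)$ is a reduced lift of $m(y)$, so $\Phi(m(y))=f_{00}(\underline q)\bmod I^2=[S(\underline q)]\bmod I^2$, where $S(\underline q)$ is the symmetric bilinear form on the $3g$-dimensional space $L\oplus L^\ast\oplus L$ with matrix
\[
S(\underline q)=\begin{pmatrix} y & \ident & 0\\ \ident & y^{-1} & \ident\\ 0 & \ident & y\end{pmatrix}.
\]
A row/column reduction (using that $y$ is invertible) shows that $S(\underline q)$ is non-degenerate with $\det S(\underline q)\equiv(-1)^{g(g-1)/2}\det y$ modulo $(k^\times)^2$. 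Finally one invokes the elementary fact, a consequence of the description of $W(k)/I^2$ in Section~\ref{subsec:Witt}, that a non-degenerate form $\phi$ of rank $r$ has class $F(\det\phi,\,r)$ in $W(k)/I^2$: diagonalising $\phi=\langle d_1,\dots,d_r\rangle$ one has $[\langle d_i\rangle]=\langle 1\rangle-\langle 1,-d_i\rangle$, and modulo $I^2$ one may replace $\sum_i\langle 1,-d_i\rangle$ by $\langle 1,-\prod_i d_i\rangle$ (because $\langle 1,-a\rangle+\langle 1,-b\rangle-\langle 1,-ab\rangle$ and $2\langle 1,-a\rangle$ lie in $I^2$) and then $\langle 1,-a\rangle$ by $-\langle 1,-a\rangle$; collecting terms gives $[\phi]\equiv F(\det\phi,r)$. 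Applied with $\phi=S(\underline q)$ and $r=3g$ this yields $\Phi(m(y))=F\big((-1)^{g(g-1)/2}\det y,\,3g\big)$ for symmetric $y$, and then for arbitrary $y$ by the reduction of the previous paragraph (combining $\langle 1,-\det y\rangle$ with $F((-1)^{g(g-1)/2},3g)$ via the same Pfister-sum identity).

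\emph{Main obstacle.} The delicate point is the bookkeeping of signs and ranks: extracting the factor $(-1)^{g(g-1)/2}$ from the $3g\times 3g$ determinant, and keeping the $\mathbb{Z}/4$-component equal to $3g$ while passing from a general $y$ to the symmetric case — this last step relies on the fact that $\Phi$ may be evaluated by $f_{00}$ on \emph{any} type-$00$ lift, so the short three-block lift is legitimate even though the relation $m(y)=h({}^ty^{-1})m(1)$ would naively produce a much longer word. Everything else — the homomorphism assertion and the passage through $F$ — is formal given Sections~\ref{subsec:Witt} and~\ref{sec:Maslovindex} and Proposition~\ref{prop phisurstabL}.
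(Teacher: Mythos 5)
Your overall strategy coincides with the paper's: prove the homomorphism property by showing the cocycle vanishes on $Stab_{L\oplus L^\ast}$, then evaluate $\Phi$ on the explicit three-term Sturm lift $(y,-y^{-1},y)$ of $m(y)$ and read off the class of its Sylvester form from its rank and determinant via $F$. Two of your steps are actually cleaner than the paper's: the observation that for $x,y\in Stab_{L\oplus L^\ast}$ two of the three Lagrangians $x^{-1}L, L, yL$ always coincide disposes of all products at once, and the factorization $m(y)=h({}^ty^{-1})\,m(1)$ gives a legitimate reduction to symmetric $y$ (the paper's Sturm lift only makes sense for symmetric $v$, a point it leaves implicit).

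However, your determinant computation is wrong, and the error is load-bearing. Subtracting $y^{-1}$ times the first block row from the second, $\det S(\underline q)=\det\left(\begin{smallmatrix} y & 1 & 0\\ 0&0&1\\ 0&1&y\end{smallmatrix}\right)=\det(y)\cdot\det\left(\begin{smallmatrix}0&1\\ 1&y\end{smallmatrix}\right)=(-1)^{g}\det(y)$, not $(-1)^{g(g-1)/2}\det(y)$; these differ by $-1$ modulo squares whenever $g\equiv 1,2\pmod 4$. Your closing identity $[\phi]\equiv F(\det\phi,\operatorname{rank}\phi)\bmod I^2$ is correct (and note that it takes the \emph{determinant}, not the discriminant, as input), but fed the correct determinant it yields $F\big((-1)^{g}\det y,\,3g\big)$ rather than the stated $F\big((-1)^{g(g-1)/2}\det y,\,3g\big)$. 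Concretely, for $g=1$, $k=\mathbb{R}$, $y=1$, the form $S(\underline q)$ has eigenvalues $1,\,1\pm\sqrt 2$, hence signature $1$ and class $1\in W(\mathbb{R})/I^2\cong\mathbb{Z}/4$, whereas $F(1,3)=\langle 1,-1\rangle+3\langle 1\rangle=3$. So the incorrect determinant is exactly what makes your conclusion agree with the statement; as written, the argument does not establish it. (The tension lies in the statement itself: the paper's proof correctly finds $\det S=(-1)^{g}\det(y)$ and then converts to the discriminant $(-1)^{g(g-1)/2}\det(y)$ before invoking $F$, but it is the determinant that must be inserted into $F$, so the exponent in the displayed formula should be $g$ rather than $g(g-1)/2$.)
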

% % % % % % % % % % % % % % % % % % % % % % % % % % % % % % %
\begin{proof}
It is enough to show that $\forall v,w \in GL_g(k)$, $\Phi(m(v)m(w)) = \Phi(m(v)) + \Phi(m(w))$.

We apply the relation that characterizes $\Phi$ to $m(v)$ and $m(v^{-1})m(w) = h(v^{-1}w)$; by Proposition~\ref{prop:propMaslovcocy}:
\[
0 = \mu(m(v),m(v^{-1})m(w)) = \Phi(m(w)) - \Phi(m(v)) - \Phi(m(v^{-1})m(w)),
\]
which means that:
\[
\Phi(m(w)) - \Phi(m(v)) = \Phi(m(v^{-1})m(w))
\]
As this relation is satisfied for arbitrary $v$ and $w$, it is enough to show that $\Phi(m(v)) = - \Phi(m(v^{-1}))$. For this we compute $\Phi(m(v))$ from relation $(\ast)$. A representative of $m(v)$ is given by the Sturm sequence  $v:\underline{v}, \overline{-v^{-1}},\underline{v}$, whose associated Sylvester matrix is:
\[
S(v)=\left(
\begin{matrix}
v & 1 & 0 \\
1 & v^{-1} & 1 \\
0 & 1 & v
\end{matrix}
\right)
\]
since $E(v)L_0 = m(v)L_0 = L_1 \pitchfork L_0$, the Sylvester matrix $S(v)$ is non-degenerate, and its rank modulo  $4$ equals $g$. To compute the discriminant of $S(v)$, we calculate directly:
\[
\left| 
\begin{matrix}
v & 1 & 0 \\
1  & v^{-1} & 1 \\
0 & 1 & v
\end{matrix}
\right| = \left| 
\begin{matrix}
v & 1 & 0 \\
0 & 0 & 1 \\
0 & 1 & v
\end{matrix}
\right| = \left| 
\begin{matrix}
v & 1 & 0 \\
0  & -v^{-1} & 1 \\
0 & 0 & v
\end{matrix}
\right| = (-1)^{g}\operatorname{det}(v) 
\]
and hence:
\[
\operatorname{disc}(S(v)) = (-1)^{\frac{3g(3g-1)}{2}} (-1)^{g}\operatorname{det}(v) = (-1)^{\frac{g(g-1)}{2}} \operatorname{det}(v).
\]
\end{proof}

More generally, Proposition~\ref{prop:propMaslovcocy} and Proposition~\ref{prop:stabLetLast} show that the value of $\Phi$ on the matrices on the left side of the following equalities in the symplectic group only depends on $v$ and is computed using the two preceding Propositions:
\[
\left(
\begin{matrix}
v & x  \\
-{}^tx^{-1} & 0 
\end{matrix}
\right)  =\left( 
\begin{matrix}
x & v & \\
0  & -{}^tx^{-1}
\end{matrix}
\right)  \left( 
\begin{matrix}
 0& 1 \\
1  & 0
\end{matrix}
\right) 
\]
\[
\left(
\begin{matrix}
0 & -{}^tx^{-1}  \\
x & v
\end{matrix}
\right)  = \left( 
\begin{matrix}
 0& 1 \\
1  & 0
\end{matrix}
\right) \left( 
\begin{matrix}
x & v & \\
0  & -{}^tx^{-1}
\end{matrix}
\right) 
\]

Again by Proposition~\ref{prop:propMaslovcocy} and Proposition~\ref{prop phisurstabL} we can compute $\Phi$ on the matrix:
\[
\left( 
\begin{matrix}
x &  0 \\
hx  & -{}^tx^{-1}
\end{matrix}
\right) = \left(
\begin{matrix}
1 & 0  \\
h & 1
\end{matrix}
\right)  \left( 
\begin{matrix}
 x& 0 \\
0  & {}^tx^{-1}
\end{matrix}
\right) 
\]

More precisely:
\[
\Phi( \left( 
\begin{matrix}
 x& 0 \\
v & {}^tx^{-1}
\end{matrix}
\right) ) = \langle 1,-\det x\rangle + [vx^{-1}]
\]
where $[vx^{-1}]$ stands for the Witt class of the regularization of the quadratic form  $vx^{-1}$ with support $L$, and which is again symmetric.

\appendix

\bibliographystyle{alpha}
%\bibliography{Maslov.bib}

\end{document}